\newcommand{\ZK}{(}
\newcommand{\YK}{)}
\newcommand{\DP}{,}
\newcommand{\PL}{+}
\newtheorem{Theorem}{Theorem}[section]
\newtheorem{Lemma}[Theorem]{Lemma}
\newtheorem{Definition}[Theorem]{Definition}
\newtheorem{Corollary}[Theorem]{Corollary}
\newtheorem{Proposition}[Theorem]{Proposition}
\newtheorem{Example}[Theorem]{Example}
\newtheorem{Remark}[Theorem]{Remark}
\newtheorem{Problem}[Theorem]{Problem}
\title {Periodicities in cluster algebras and  cluster automorphism groups}
\author{Fang Li $\;\;\;\;\;\;$ Siyang Liu $\;\;\;\;\;\;$}
\address{Fang Li
\newline Department of Mathematics, Zhejiang University (Yuquan Campus), Hangzhou, Zhejiang
310027,  P.R.China}
\email{fangli@zju.edu.cn}
\address{Siyang Liu
\newline Department
of Mathematics, Zhejiang University (Yuquan Campus), Hangzhou, Zhejiang
310027, P.R.China}
\email{siyangliu@zju.edu.cn}
\date{version of \today}
\newcommand{\lra}{\longrightarrow}
\newcommand{\ra}{\rightarrow}
\newcommand{\sdp}{\times\kern-.2em\vrule height1.1ex depth-.05ex}
\newcommand{\epi}{\lra \kern-.8em\ra}
\begin{document}
\renewcommand{\thefootnote}{\alph{footnote}}
\setcounter{footnote}{-1} \footnote{\emph{Mathematics Subject
Classification(2010)}:~ 13F60  }
\renewcommand{\thefootnote}{\alph{footnote}}
\setcounter{footnote}{-1} \footnote{ \emph{Keywords}:  cluster algebra, mutation, cluster automorphism group, periodicity }

\renewcommand{\thefootnote}{\alph{footnote}}
\maketitle
\bigskip
\begin{abstract}
In this article, we study the relations between groups related to cluster automorphism groups
 which are defined by Assem, Schiffler and Shamchenko in \cite{ASS}. We establish  the relationship among
 (strict) direct cluster automorphism groups and those groups consisting of periodicities of respectively labeled seeds and exchange matrices in the language of short exact sequences. As an application, we characterize automorphism-finite
 cluster algebras in the cases with bipartite seeds or finite mutation type. Finally, we study the relation between
 the groups $\mathrm{Aut}\mathcal{A}$ and $\mathrm{Aut}_{M_n}S$ and give the negative
 answer via counter-examples to King and Pressland's a problem in \cite{KP}.
\end{abstract}

\section{Introduction}
Cluster algebras were invented by Fomin and Zelevinsky in a series of papers  \cite{FZ1,FZ2,FZ4}. They are defined as
commutative $\mathbb{Z}$-algebras generated by cluster variables. Many relations between cluster
algebras and other branches of mathematics have been discovered, such as periodicities of $T$-systems
and $Y$-systems, representations of quivers, combinatorics.  There are two important kinds of cluster algebras,
which are of finite type and finite mutation type. Cluster algebras with finite cluster variables
are said to be of finite type. This kind of cluster algebras has been classified via Dynkin graphs by
Fomin and Zelevinsky in \cite{FZ3}. Cluster algebras with finite exchange matrices are said to be of finite
mutation type, which are classified by Felikson,  Shapiro, and Tumarkin in \cite{FST1,FST2}.

Cluster automorphisms  are $\mathbb{Z}$-automorphisms of cluster algebras with trivial coefficients and with
skew-symmetric exchange matrices,  which commute with mutations. They were firstly defined by  Assem,
Schiffler and Shamchenko in \cite{ASS} and studied by a lot of authors. Chang and Zhu  studied the cluster
automorphism group of a skew-symmetric cluster algebra with geometric coefficients in \cite{CZ0}.
They extended cluster automorphisms to cluster algebras with skew-symmetrizable exchange matrices of
finite type in \cite{CZ2}, and  showed cluster automorphism groups of cluster algebras of finite type have close
relations with the so-called $\tau$-transformation group.
They also studied the relation between cluster automorphism groups of skew-symmetrizable cluster algebras of finite type
or skew-symmetric algebras of finite mutation type and the corresponding automorphism groups of exchange graphs in \cite{CZ1}.
Lawson extended some results in \cite{CZ1} to cluster algebras with skew-symmetrizable exchange matrices of finite mutation type by introducing a
marking on the exchange graph in \cite{Law}. The further related study on this topic was given for sign-skew-symmetric
cluster algebras in \cite{HLY}.

Periodicities in cluster algebras were firstly introduced and studied by Fomin and Zelevinsky in \cite{FZ3}. They proved
periodicity conjecture of Zamolodchikov on $Y$-systems from indecomposable Cartan matrices of finite type.
Nakanishi studied periodicities in cluster algebras in general cases rather than finite type in \cite{Nak}.
He also proved Restriction and Extension Theorem for cluster algebras with coefficients
from a subtraction-free semifield. In this paper, we define two groups consisting of mutations associated to
periods of exchange matrices and labeled seeds, respectively. We show that quotients of the two groups have
 close relations with strict direct cluster automorphism groups.
We formulate relationships between strict direct cluster automorphism groups,
direct cluster automorphism groups and permutation-periodic groups. These relations are given by
the following two short exact sequences of groups.
\begin{Theorem}[Theorem \ref{exa1} and \ref{thm2}]
Let $(\mathbf{x},B)$ be a labeled seed of a cluster algebra $\mathcal{A}$, then there is an exact sequence
\[ 1\rightarrow H(\mathbf{x})\rightarrow G(B)\xrightarrow{\varphi} \mathrm{SAut}^+(\mathcal{A})\rightarrow 1.  \]
 \[ 1\rightarrow \mathrm{SAut}^+(\mathcal{A}) \rightarrow \mathrm{Aut}^+(\mathcal{A}) \xrightarrow{\phi} L_n/P_n \rightarrow 1.\]
\end{Theorem}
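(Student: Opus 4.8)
The plan is to realize each sequence as the kernel--image decomposition of one explicitly constructed homomorphism, handling the two sequences by the same device applied at the level of labeled seeds and of exchange matrices. For the first sequence I would let a periodicity $w\in G(B)$ of the exchange matrix act on the labeled seed, so that $w\cdot(\mathbf{x},B)=(\mathbf{x}',B)$ has unchanged exchange matrix, and define $\varphi(w)$ to be the unique $\mathbb{Z}$-algebra endomorphism of $\mathcal{A}$ determined by $x_i\mapsto x_i'$. The first point to settle is that $\varphi(w)$ is a genuine strict direct cluster automorphism: the cluster variables are algebraically independent generators of the fraction field, so the assignment extends uniquely to an endomorphism; because source and target seeds carry the same matrix $B$, the exchange relations match and $\varphi(w)$ commutes with every mutation; and since it sends a cluster to a cluster it is invertible, with inverse built from $w^{-1}$.

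Next I would verify that $\varphi$ is a homomorphism. This is the one slightly delicate bookkeeping step: one substitutes $x_j\mapsto x_j'$ into the Laurent expression for a variable reached by a further periodicity, and must use the identity $w\cdot B=B$ to see that applying the second word to the shifted cluster reproduces the composite word; getting the left/right convention consistent is the only subtlety. The kernel is then transparent, since $\varphi(w)=\mathrm{id}$ forces $x_i'=x_i$ for all $i$, i.e. $w$ fixes the whole labeled seed, which is exactly membership in $H(\mathbf{x})$; in particular $H(\mathbf{x})$ is normal in $G(B)$. For surjectivity I take $f\in\mathrm{SAut}^+(\mathcal{A})$ and use that, $f$ being strict and direct, the ordered tuple $(f(x_1),\dots,f(x_n))$ is a cluster whose exchange matrix is again $B$; as this cluster lies in the mutation class of $(\mathbf{x},B)$, it is reached from the initial seed by a sequence of mutations with net effect $B\mapsto B$, that is by some $w\in G(B)$ with $\varphi(w)=f$. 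This yields the first sequence together with the isomorphism $\mathrm{SAut}^+(\mathcal{A})\cong G(B)/H(\mathbf{x})$.

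For the second sequence the inclusion $\mathrm{SAut}^+(\mathcal{A})\hookrightarrow\mathrm{Aut}^+(\mathcal{A})$ is immediate, so the content lies in the quotient map $\phi$. To a direct automorphism $f$ I attach the relabeling permutation by which the ordered image cluster $(f(x_1),\dots,f(x_n))$ differs from the native labeling of the seed it occupies, equivalently the permutation $\sigma_f$ for which $\sigma_f(B)$ is the exchange matrix of that seed. I would then show that $\sigma_f$ is well defined modulo the subgroup $P_n$ of permutations fixing $B$, so that $\phi(f):=\sigma_f P_n$ is unambiguous; that $\phi$ is a homomorphism; and that its image is precisely $L_n/P_n$. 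The kernel consists of those $f$ whose relabeling lies in $P_n$, which is exactly the strictness condition isolated in the first sequence, so $\ker\phi=\mathrm{SAut}^+(\mathcal{A})$; the normality of $P_n$ in $L_n$, and hence the group structure on $L_n/P_n$, then follows formally from $\mathrm{SAut}^+(\mathcal{A})$ being the kernel of a homomorphism.

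The main obstacle I anticipate is the surjectivity of $\varphi$ together with the well-definedness of $\phi$. The former rests on the structural fact that a strict direct automorphism carries the initial seed to a reachable seed with exchange matrix \emph{exactly} $B$ and with compatible labeling, so that it is induced by an honest periodicity rather than merely by an abstract automorphism; the latter requires confirming that the relabeling permutation $\sigma_f$ is independent of the chosen realizing mutation path once one passes to the quotient by $P_n$. Both reduce to the uniqueness of the seed-theoretic data attached to a cluster automorphism, which I would isolate as a preliminary lemma and then feed into both exact-sequence arguments.
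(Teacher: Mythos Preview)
Your treatment of the first exact sequence is correct and matches the paper's argument essentially line for line: you build $\varphi$ by letting a period of $B$ act on the initial labeled seed, verify that it is a group homomorphism using compatibility with mutation, identify the kernel as $H(\mathbf{x})$, and obtain surjectivity from the fact that a strict direct automorphism sends $(\mathbf{x},B)$ to a labeled seed in the mutation class with the same matrix.

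There is, however, a genuine gap in your handling of the second sequence: you have misidentified the group $P_n$. You describe it as ``the subgroup $P_n$ of permutations fixing $B$'', i.e.\ the stabilizer $\{\tau\in S_n:B^{\tau}=B\}$, but in the paper $P_n$ is the set of $\tau\in S_n$ for which the permuted \emph{labeled seed} $(\mathbf{x}^{\tau},B^{\tau})$ is reachable from $(\mathbf{x},B)$ by an honest sequence of mutations. These two subgroups differ in general, and the discrepancy breaks both of the steps you flag as delicate. For well-definedness: if $f(\mathbf{x})=\sigma(\mu_{\mathbf{i}}(\mathbf{x}))=\eta(\mu_{\mathbf{j}}(\mathbf{x}))$, the paper produces an explicit mutation sequence carrying $(\mathbf{x},B)$ to $(\mathbf{x}^{\sigma\eta^{-1}},B^{\sigma\eta^{-1}})$, so that $\sigma\eta^{-1}\in P_n$ in the correct sense; there is no reason for $\sigma\eta^{-1}$ to lie in the stabilizer of $B$, since $\mu_{\mathbf{i}}(B)=B^{\sigma^{-1}}$ and $\mu_{\mathbf{j}}(B)=B^{\eta^{-1}}$ need not coincide. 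For the kernel: the condition $\sigma_f\in\mathrm{Stab}(B)$ does \emph{not} recover strictness. Strictness of $f$ means precisely that the ordered tuple $\sigma(\mu_{\mathbf{i}}(\mathbf{x}))$ can itself be reached from $(\mathbf{x},B)$ by mutations alone, which is exactly $\sigma\in P_n$ in the paper's sense; knowing only $B^{\sigma}=B$ does not guarantee this.

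Once you substitute the correct definition of $P_n$, your outline does coincide with the paper's: the paper establishes separately (Lemma~\ref{PL}) that $P_n\trianglelefteq L_n$, defines $\phi(f)=\sigma P_n$ as you do, checks well-definedness and multiplicativity via the identity $\mu_k(\mathbf{x}^{\sigma},B^{\sigma})=\sigma(\mu_{\sigma(k)}(\mathbf{x},B))$, and reads off surjectivity and the kernel. Note also that your remark that normality of $P_n$ in $L_n$ ``follows formally from $\mathrm{SAut}^+(\mathcal{A})$ being the kernel of a homomorphism'' is circular as written, since you need $L_n/P_n$ to be a group before $\phi$ can be a homomorphism into it; the paper avoids this by proving $P_n\trianglelefteq L_n$ (and $\mathrm{SAut}^+(\mathcal{A})\trianglelefteq\mathrm{Aut}^+(\mathcal{A})$) directly by conjugation computations before introducing $\phi$.
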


In \cite{ASS}, Assem, Schiffler
and Shamchenko  defined automorphism-finite cluster algebras and
proved that for any skew-symmetric cluster algebra which is acyclic or from a surface, it is
automorphism-finite if and only if
it is of finite type, i.e., its exchange matrix is mutation equivalent to an skew-symmetric matrix of Dynkin type.
As an application, we prove that a skew-symmetrizable cluster algebra with bipartite seeds or of finite mutation type
is automorphism-finite if and only if it is of finite type by considering strict direct cluster automorphisms,
which are $\mathbb{Z}$-automorphisms of cluster algebras commuting with mutation of labeled seeds.

On the other hand, in \cite{KP}, King and Pressland introduced the labeled mutation class $S$ of  a
skew-symmetric labeled seed and the group $\mathrm{Aut}_{M_n}S$ consisting of bijections from
$S$ to $S$ which commute with the action of the mutation group $M_n$. They proved  that the cluster
automorphism group is embedded into the group $\mathrm{Aut}_{M_n}S$ and that for finite mutation type case,
this embedding is in fact an isomorphism, see Theorem \ref{KP1} or [\cite{KP}, Corollary 6.3,6.4].
However for skew-symmetric  cluster algebras of infinite mutation type, they are not sure if the conclusion still holds, which is
proposed as a problem, see Problem \ref{prob}. In Section \ref{final}, we firstly give some
sufficient conditions for the embedding to be an isomorphism. Finally
we answer this problem by giving a class of cluster algebras
of infinite mutation type which satisfies the condition in Problem \ref{prob}.

The organization of the paper is as follows. In Section \ref{preli}, we introduce the basic
notions for cluster algebras and cluster automorphisms. In Section \ref{3}, we recall basic concepts
of periodicities of labeled seeds and exchange matrices following from \cite{Nak} and define mutation-periodic
groups and permutation-periodic groups for exchange matrices and labeled seeds, and their relations
 with (strict) direct cluster automorphism groups are given.  In Section \ref{4}, we show that
for skew-symmetrizable cluster algebras with bipartite seeds or of finite mutation type,
the cluster automorphism group is finite if and only if the cluster algebra is of finite type.
Finally we give some sufficient conditions for cluster algebras to satisfy $\mathrm{Aut}\mathcal{A}=\mathrm{Aut}_{M_n}S$
in Section \ref{final}, and then answer Problem \ref{prob}.

In this paper, we assume that cluster algebras are always skew-symmetrizable. In particular,
 in Section \ref{final}, we only consider those in skew-symmetric case.

\section{Preliminaries} \label{preli}

\subsection{Cluster algebras}
In this subsection, we recall basic concepts and important properties of cluster algebras. In this paper, we focus
on cluster algebras without coefficients  (that is, with trivial coefficients).
For a positive integer $n$, we will always denote by $[1, n]$ the set $\{1,2,\dots, n\}$.

Take the ambient field $\mathcal{F}$ isomorphic to the field of rational functions
in $n$ independent variables with coefficients in $\mathbb{Q}$.
A {\bf labeled seed} is a pair $(\mathbf{x}, B)$ in which $\mathbf{x}$ is an $n$-tuple of free
generators of $\mathcal{F}$, and $B$ is an $n\times n$ skew-symmetrizable integer matrix.
Recall that $B$ is said to be skew-symmetrizable if there exists an positive definite diagonal integer
matrix $D$ such that $DB$ is skew-symmetric. For $k\in [1,n]$, define another pair
$(\mathbf{x'},B') = \mu_k(\mathbf{x},B)$ which is called the mutation of $(\mathbf{x},B)$
 at $k$ and obtained by the following rules:
\begin{enumerate}
\item[(1)] $\mathbf{x'} = (x'_1, \dots, x'_n)$ is given by
\[x'_k = \frac{\prod x_i^{[b_{ik}]_+} + \prod x_i^{[-b_{ik}]_+}}{x_k}\] and $x'_i = x_i$ for $i \neq k$;
\item[(2)] $B'=\mu_k(B)=(b'_{ij})_{n\times n}$ is given by \[b'_{ij}=\begin{cases} -b_{ij},&\text{if $i=k$ or $j=k$;}\\b_{ij} + \mathrm{sgn}(b_{ik})[b_{ik}b_{kj}]_+,&\text{otherwise,}\end{cases}\]
\end{enumerate}
where $[x]_+=\mathrm{max}\{x,0\}$. Note that $(\mathbf{x'},B')$ is also a labeled
seed and $\mu_k$ is an involution. In a labeled seed $(\mathbf{x},B)$, where $\mathbf{x}=(x_1,x_2,\dots,x_n)$ $\mathbf{x}$ and $B=(b_{ij})_{n\times n}$,
is called a labeled cluster, elements in $\mathbf{x}$ are called {\bf cluster variables}, and $B$ is
called an {\bf exchange matrix}. The unlabeled seeds are obtained by identifying labeled seeds that differ from each other by simultaneous permutations
of the components in $\mathbf{x}$, and of the rows and columns of $B$. We will refer to unlabeled seeds simply as seeds, when there is no risk of confusion.
 Throughout this paper, without loss of generality, we always assume that $B$ is indecomposable as a matrix, i.e.,
for any $i,j$, there exist $i=i_0, i_1, \dots, i_k=j$ such that $b_{i_0i_1}b_{i_1i_2}\dots b_{i_{k-1}i_k}\neq 0$.
\begin{Definition}[\cite{FZ1,FZ4}]

\begin{enumerate}
\item[(1)] Two labeled seeds  $(\mathbf{x},B)$ and $(\mathbf{x}',B')$ are said to be {\bf mutation equivalent} if there is a finite mutation sequence exchanging $(\mathbf{x},B)$ to $(\mathbf{x}',B')$;

 \item[(2)] Two labeled seeds  $(\mathbf{x},B)$ and $(\mathbf{x}',B')$ are said to be {\bf equivalent} (or say, they define the {\bf same unlabeled seed})
if $(\mathbf{x}',B')$ can be  obtained from $(\mathbf{x},B)$ by simultaneously re-labeling $n$-tuple $\mathbf{x}$ and the corresponding
re-labeling of the rows and columns of $B$.
\end{enumerate}
\end{Definition}

Note that two labeled seeds may not mutation equivalent as labeled seeds even though they are the same unlabeled seeds.

\begin{Definition}[\cite{FZ4}]
Let $\mathrm{T}_n$ be an $n$-regular tree and valencies emitting from each vertex are labelled by $1,2,\dots,n$.
A cluster pattern is an $n$-regular tree $\mathrm{T}_n$ such that for each vertex $t\in \mathrm{T}_n$, there is a
labeled seed $\Sigma_t=(\mathbf{x}_t,B_t)$ and for each edge labelled by $k$, two labeled seeds in the endpoints
are obtained from each other by seed mutation at $k$.  And $\Sigma_t=(\mathbf{x}_t,B_t)$ are written as follows:
\[\mathbf{x}_t=(x_{1,t},x_{2,t},\dots,x_{n,t}),\,\,\,\,\,\,B_t=(b_{ij}^t).\]
\end{Definition}

Note that a cluster pattern is uniquely determined by one labeled seed, thus for a labeled seed $(\mathbf{x},B)$,
we may associate with a cluster pattern $T_n(\mathbf{x},B)$. The cluster algebra $\mathcal{A}=\mathcal{A}(\mathbf{x}_{t_0},B_{t_0})$
associated to the initial seed $(\mathbf{x}_{t_0},B_{t_0})$ is a $\mathbb{Z}$-subalgebra of $\mathcal{F}$
generated by cluster variables appeared in $T_n(\mathbf{x}_{t_0},B_{t_0})$. One of the most important properties
in cluster algebras is the Laurent phenomenon, which says any cluster variable can be expressed as a Laurent
polynomial in terms of cluster variables in the initial labeled seed with coefficients in $\mathbb{Z}$.
These Laurent polynomials are conjectured to have positive coefficients, which had been proved for skew-symmetric
cluster algebras in \cite{LS} and for the skew-symmetrizable cluster algebras  in \cite{GHKK}.

A cluster algebra arising from a labeled seed with skew-symmetric (skew-symmetrizable, resp.)
exchange matrix is also called skew-symmetric (skew-symmetrizable, resp.). There is a bijection
between skew-symmetrizable matrices and valued quivers. Indeed for a skew-symmetrizable matrix
$B=(b_{ij})_{n\times n}$, we define a valued quiver $(Q,v)$ as follows. The vertices set $Q_1$ is
given by $\{1,2,\dots,n\}$, and there is an arrow $\alpha$ from $i$ to $j$ whenever $b_{ij}>0$
and the value $v(\alpha)$ is defined to be $(|b_{ij}|,|b_{ji}|)$. There is also a bijection between
skew-symmetric matrices and cluster quivers, which are finite quivers without loops nor directed $2$-cycles.
For a skew-symmetric matrix $B=(b_{ij})_{n\times n}$, the quiver $Q$ of $B$ is defined as follows.
The vertices of $Q$ is $\{1,2,\dots\}$ and there are $b_{ij}$ arrows from $i$ to $j$ if $b_{ij}>0$
and there are no arrows from $i$ to $j$ if $b_{ij}\leqslant 0$. In this paper, quivers are assumed to be
cluster quivers. Mutation of quivers are defined as follows.
\begin{Definition}[\cite{FZ1}]
Let $Q$ be a quiver and $k\in Q_0$ be a fixed vertex. The mutation $\mu_k(Q)$ of $Q$ at $k$ is obtained by the following steps:
\begin{enumerate}
\item[(1)] For every 2-path $i\rightarrow k\rightarrow j$, add a new arrow $i\rightarrow j$;
\item[(2)] Reverse all arrows incident with $k$;
\item[(3)] Delete a maximal collection of 2-cycles from those created in $(1)$.
\end{enumerate}
\end{Definition}
Note that mutation of quivers is compatible with mutation of skew-symmetric matrices, i.e., let $Q$ and
$Q'$ be the corresponding quivers of $B$ and $\mu_k(B)$, then $Q'=\mu_k(Q)$ and vice versa. The so-called
weighted quiver is obtained from a quiver by replacing its multiple arrows by a single arrow and assign
it the value given by the multiplicity of the arrows. For simplicity, we sometimes use weighted quivers to replace
quivers. If the weight of an arrow is $1$, we usually omit it.
\begin{Example}
The quiver and weighted quiver of the skew-symmetric matrix
\[B=\begin{bmatrix} 0&2&1\\-2&0&1\\-1&-1&0 \end{bmatrix}. \]
 are given as follows respectively:
\[\mathord{\begin{tikzpicture}[scale=1.3,baseline=0]
\node at (0,0.3) (12) {$2$};
\node at (-1,-0.3) (11) {$1$};
\node at (1,-0.3) (13) {$3$,};
\node at (3,0.3) (22) {$2$};
\node at (2,-0.3) (21) {$1$};
\node at (4,-0.3) (23) {$3$.};
\path[-angle 90]
	(11) edge [bend left]  (12)
    (11) edge   (12)
    (12) edge   (13)
    (11) edge   (13)
	(21) edge node [left] {$2$} (22)
	(22) edge  (23)
	(21) edge  (23);
\end{tikzpicture}}\]

\end{Example}

\begin{Proposition}[\cite{GSV}]\label{clu}
Every seed is uniquely determined by its cluster, i.e., for two mutation equivalent seeds $(\mathbf{x},B)$ and $(\mathbf{x}',B')$,
if $x_i'=x_{\sigma(i)}$ for some $\sigma\in S_n$ and any $i\in [1,n]$, then $b'_{ij}=b_{\sigma(i)\sigma(j)}$ for any $i, j\in [1,n]$.
\end{Proposition}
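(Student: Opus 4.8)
The plan is to reconstruct the exchange matrix from the cluster via the exchange relations. After relabelling every seed of the pattern by $\sigma$, I may assume $\sigma = \mathrm{id}$, so the task becomes: if two mutation equivalent seeds $(\mathbf{x},B)$ and $(\mathbf{x},B')$ share the same ordered cluster $\mathbf{x}$, then $B = B'$. Both seeds then occur in a single cluster algebra $\mathcal{A}\subseteq\mathcal{F}$, and since $B$ is indecomposable I may assume $n\geq 2$ (the case $n=1$ forcing $B=B'=0$).

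First I would recover the columns of $B$ from the cluster. Mutating $(\mathbf{x},B)$ at $k$ produces a cluster variable $x'_k$ with $x_k x'_k = \prod_i x_i^{[b_{ik}]_+} + \prod_i x_i^{[-b_{ik}]_+}$. As $b_{kk}=0$ and, for each $i\neq k$, at most one of $b_{ik}, -b_{ik}$ is positive, the right-hand side is a sum of two coprime monomials $M_1, M_2$. Since $\mathcal{F}$ is the rational function field in $x_1,\dots,x_n$, the unordered pair $\{M_1,M_2\}$ is an intrinsic invariant of the element $x_k x'_k$, and the column $(b_{ik})_i$ is read off from the exponent differences, determined up to the global sign change induced by swapping $M_1$ and $M_2$. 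Performing the identical computation for $B'$ gives $x_k x''_k = M'_1 + M'_2$ and the column $(b'_{ik})_i$ up to sign. The proof thus reduces to two points: that the neighbouring variable is intrinsic, namely $x'_k = x''_k$ for all $k$; and that the residual sign comes out correctly.

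Granting $x'_k = x''_k$, the sign bookkeeping can be settled cleanly. Then $\{M_1,M_2\}=\{M'_1,M'_2\}$, so for each $k$ either $b'_{ik}=b_{ik}$ for all $i$ or $b'_{ik}=-b_{ik}$ for all $i$; let $S$ collect the indices of the second kind. Mutation preserves the symmetrizer, so $B$ and $B'$ share a positive diagonal $D=\mathrm{diag}(d_i)$ with $d_i b_{ij}=-d_j b_{ji}$ and $d_i b'_{ij}=-d_j b'_{ji}$. For $i\in S$ and $j\notin S$ one has $b'_{ij}=b_{ij}$ and $b'_{ji}=-b_{ji}$, whence $d_i b_{ij}=d_j b_{ji}=-d_i b_{ij}$ and therefore $b_{ij}=b_{ji}=0$. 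So $B$ has no nonzero entry between $S$ and its complement; indecomposability then forces $S=\emptyset$, giving $B'=B$, or $S=[1,n]$, giving $B'=-B$.

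The hard part is precisely the structural input behind $x'_k=x''_k$ together with the exclusion of the surviving alternative $B'=-B$. For the former I would show that $\mathbf{x}\setminus\{x_k\}$ extends to a seed of $\mathcal{A}$ in exactly two ways -- the two clusters meeting along the codimension-one face of the exchange graph labelled $k$ -- so that $x''_k$, which also completes this subset and differs from $x_k$, must coincide with $x'_k$. Excluding $B'=-B$ is the genuinely global assertion that no nontrivial mutation sequence returns the cluster $\mathbf{x}$ while negating its exchange matrix; I expect this to be the main obstacle, because the $B$-pattern and the $(-B)$-pattern carry identical clusters at every vertex and so the two cannot be separated by any finite string of exchange relations. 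I would extract both facts from the sign-coherence of $c$-vectors and the tropical duality between $C$- and $G$-matrices, which render the $g$-vectors of the cluster variables intrinsic and thereby pin down the whole seed; this is the content of the results of Gekhtman--Shapiro--Vainshtein and Cao--Li cited in the statement. Combining everything yields $S=\emptyset$ and $B=B'$, which after undoing the relabelling is the desired $b'_{ij}=b_{\sigma(i)\sigma(j)}$.
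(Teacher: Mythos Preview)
The paper does not supply a proof of this proposition; it is quoted as a known result from \cite{GSV} and \cite{CL}, so there is no in-paper argument to compare your proposal against.

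Your outline is broadly correct and close in spirit to the cited references: recover each column of $B$ up to a sign from the exchange binomial $x_kx'_k=M_1+M_2$, use the common skew-symmetrizer to force all the column signs to agree (your indecomposability argument here is clean and correct), reducing to $B'=\pm B$, and then appeal to the $c$-vector/$g$-vector machinery to rule out $B'=-B$. You also correctly identify that $B'=-B$ cannot be excluded by any finite collection of exchange relations alone, since the $B$-pattern and the $(-B)$-pattern have identical clusters at every vertex.

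One caution: the assertion that $\mathbf{x}\setminus\{x_k\}$ extends to a cluster of $\mathcal{A}$ in exactly two ways is not an independent combinatorial lemma you can invoke on the way to the proposition---in the literature it is \emph{deduced from} the proposition rather than used to prove it, so citing it as a step is circular. The genuine input, as you say at the end, is that $g$-vectors (or, in \cite{CL}, the $D$-matrix pattern) are intrinsic invariants of cluster variables and, together with sign-coherence and tropical duality, pin down the entire seed. The proofs in \cite{GSV,CL} run directly through that invariant rather than through the two-extension property; if you write this up, make the $g$-vector (or $D$-matrix) argument the backbone and derive $x'_k=x''_k$ and the exclusion of $-B$ from it, rather than treating ``unique neighbour'' as a separate stepping-stone.
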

This proposition shows that in a (labeled) seed $(\mathbf{x}',B')$, the exchange matrix $B'$ is uniquely determined by the (labeled)
cluster $\mathbf{x}'$. We may use $B(\mathbf{x}')$ to denote the corresponding exchange matrix of $\mathbf{x}'$, and we sometimes use the
cluster $\mathbf{x}$ to denote the labeled seed $(\mathbf{x},B)$.

\subsection{Finite type and finite mutation type}
Let $(\mathbf{x},B)$ be a labeled seed, the cluster algebra $\mathcal{A}=\mathcal{A}(\mathbf{x},B)$ is said to be of finite type
if the set consisting of cluster variables appeared in $T_n(\mathbf{x},B)$ is a finite set, which is equivalent to say there are
finite clusters mutation equivalent to $(\mathbf{x},B)$.
Cluster algebras of finite type have been classified by Fomin and Zelevinsky in \cite{CZ2}. They showed
this classification is identical to the Cartan-Killing classification of semisimple Lie algebras and finite root systems.
Indeed, it is showed that the cluster algebra $\mathcal{A}=\mathcal{A}(\mathbf{x},B)$ is of finite type if and
only if $\Gamma(B)$ is equivalent to an oriented Dynkin graph in \cite{CZ2}, where the diagram $\Gamma(B)$ is a weighted quiver
associated to $B$ whose vertex set is $[1,n]$ and there is an arrow $i\rightarrow j$ with weight $|b_{ij}b_{ji}|$ if and only if $b_{ij}>0$ .
If $\mathcal{A}=\mathcal{A}(\mathbf{x},B)$ is of finite type, we also call $(\mathbf{x},B)$ and $B$ are of finite type.
In particular, if $B$ is skew-symmetric, then it is of finite type if and only if its quiver is mutation equivalent
to an orientation of a Dynkin graph.

The cluster algebra $\mathcal{A}=\mathcal{A}(\mathbf{x},B)$ is said to be of finite mutation type
if the set consisting of exchange matrices appeared in $T_n(\mathbf{x},B)$ is a finite set. Cluster algebras of
finite mutation type have been classified by Felikson,  Shapiro, and Tumarkin  in \cite{FST1,FST2}.
The cluster algebra $\mathcal{A}(\mathbf{x},B)$ is of finite mutation type if and only if for any $i,j\in [1,n]$, $|b_{ij}'b_{ji}'|\leqslant 4$ holds
for any $B'$ mutation equivalent to $B$. For skew-symmetric
cases, the cluster algebra $\mathcal{A}=\mathcal{A}(\mathbf{x},B)$ is of finite mutation type if and only if $B$ is arising
from a triangulation of a surface, or a generalized Kronecker quiver, or other $11$ exceptional quivers listed in  \cite{FST1}.
For skew-symmetrizable case, Felikson,  Shapiro, and Tumarkin  classified cluster algebras of finite mutation
type via unfolding in \cite{FST2}.

The cluster algebra $\mathcal{A}=\mathcal{A}(\mathbf{x},B)$ is said to be acyclic
if there exists an exchange matrix in $T_n(\mathbf{x},B)$ whose (valued) quiver is acyclic.
In this case, the matrix $B$ is said to be mutation-acyclic. However if for any exchange matrix
in $T_n(\mathbf{x},B)$, its (valued) quiver is not acyclic, the matrix $B$ is called mutation-cyclic.

It is obvious that if $B$ is of finite type, then it is mutation-acyclic and it is also of finite
mutation type. The converse is not true. Actually an acyclic quiver is of finite mutation type if
and only if it is an orientation of a Dynkin graph or an extended Dynkin graph. Notice also that not all quivers of finite
mutation type are mutation-acyclic.
\subsection{Cluster automorphisms}
In this subsection, basic concepts of cluster automorphisms are recalled. We also introduce strict direct cluster
automorphisms for cluster algebras without coefficients. We consider relations between periods of labeled seeds and
exchange matrices and strict direct cluster automorphisms. As an application, we consider cluster automorphism
finite cluster algebras.

Recall that for any (labeled) seed, the exchange matrix
is uniquely determined by the cluster (see \ref{clu}). It is suitable to represent a seed by its cluster in the following.
\begin{Definition}[\cite{ASS}]\label{cluauto} Let $\mathcal{A}=\mathcal{A}(\mathbf{x},B)$ be a cluster algebra, and $f:\mathcal{A}\rightarrow \mathcal{A}$
be an automorphism of $\mathbb{Z}$-algebras. If there is a seed $(\mathbf{x'},B')$ of $\mathcal{A}$ such that

(1) $f(\mathbf{x'})$ is a cluster;

(2) $f$ is compatible with mutations, i.e., for every $x\in \mathbf{x}$, we have \[f(\mu_{x,\mathbf{x}'}(x))=\mu_{f(x),f(\mathbf{x}')}(f(x)),\]
then $f$ is called a cluster automorphism of $\mathcal{A}$.
\end{Definition}

Note that in Definition \ref{cluauto}, the seed is an unlabeled seed mutation equivalent to the initial seed
$(\mathbf{x},B)$. There are some equivalent conditions for an automorphism of a cluster algebra to be a cluster
automorphism.

\begin{Proposition}[\cite{ASS}] Let $f$ be a $\mathbb{Z}$-algebra automorphism of $\mathcal{A}$. Then the following conditions
are equivalent:
\begin{enumerate}
\item[(i)] $f$ is a cluster automorphism of $\mathcal{A}$;
\item[(ii)] $f$ satisfies (1)(2) in Definition \ref{cluauto} for every seed;
\item[(iii)] $f$ maps each cluster to a cluster;
\item[(iv)] there exists a seed $(\mathbf{x'},B')$ such that $f(\mathbf{x}')$ is a cluster, and $B(f(\mathbf{x}'))=B'$ or $-B'$.
\end{enumerate}
\end{Proposition}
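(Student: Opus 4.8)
The plan is to establish the cycle of implications $(i) \Rightarrow (ii) \Rightarrow (iii) \Rightarrow (iv) \Rightarrow (i)$, using throughout the key structural fact (Proposition \ref{clu}) that a seed is determined by its cluster, together with the definition of cluster automorphism, which only asserts the defining conditions (1) and (2) for \emph{some} seed. First I would observe that $(ii) \Rightarrow (i)$ is immediate, since (ii) is simply the statement that the defining conditions hold for every seed, which in particular includes the one seed required in Definition \ref{cluauto}. Likewise $(iii) \Rightarrow$ the existence half of the other conditions is cheap once one knows that an automorphism sending clusters to clusters must be compatible with the exchange relations. The real content lies in showing that the existence of \emph{one} good seed propagates to \emph{all} seeds, i.e.\ $(i) \Rightarrow (iii)$ or $(i) \Rightarrow (ii)$.

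\medskip

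For the propagation step, I would argue by induction on the length of a mutation sequence connecting the given good seed $(\mathbf{x}',B')$ to an arbitrary seed. The base case is the hypothesis. For the inductive step, suppose $f$ maps the cluster of a seed $\Sigma$ to a cluster, and let $\Sigma'' = \mu_k(\Sigma)$. The crucial point is that $f$, being a $\mathbb{Z}$-algebra homomorphism, respects the exchange relation defining $x_k''$, namely $x_k'' x_k = \prod x_i^{[b_{ik}]_+} + \prod x_i^{[-b_{ik}]_+}$; applying $f$ and using that $f(x_k'')$ is the image of the mutated variable, one checks $f(\mathbf{x}'')$ is again a cluster, with exchange matrix $B(f(\mathbf{x}'')) = \pm B''$. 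Here one must carefully track the sign: because the Laurent expansion has positive coefficients (cited earlier via \cite{GHKK}), $f$ can either preserve or globally reverse the orientation of the quiver, and Proposition \ref{clu} forces the matrix of $f(\mathbf{x}'')$ to be determined by the cluster, which pins it to $+B''$ or $-B''$ consistently.

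\medskip

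To close the loop with $(iv)$, I would use $(iv) \Rightarrow (i)$ by noting that the condition $B(f(\mathbf{x}')) = B'$ or $-B'$ is exactly what is needed to verify that mutations commute with $f$ at the seed $(\mathbf{x}',B')$: when $B(f(\mathbf{x}'))=B'$, compatibility with mutation is the direct-orientation case, and when $B(f(\mathbf{x}'))=-B'$, the two half-products in the exchange relation swap roles, so $f(\mu_k(x))$ still equals the mutation of $f(x)$ in the target seed. Conversely $(iii) \Rightarrow (iv)$ follows by applying (iii) to the cluster $\mathbf{x}'$ and its single mutations, then invoking Proposition \ref{clu} to conclude $B(f(\mathbf{x}'))$ must be $\pm B'$.

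\medskip

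The main obstacle I anticipate is the sign bookkeeping in the propagation argument: one must verify that the global choice of $+B$ versus $-B$ is consistent across all seeds and does not flip unpredictably under individual mutations. The resolution is that a $\mathbb{Z}$-algebra automorphism acts the same way on the whole algebra, so the sign is a global invariant of $f$ (reflecting whether $f$ is a \emph{direct} or \emph{inverse} automorphism), and Proposition \ref{clu} rigidifies the matrix at each step, preventing any local ambiguity. The positivity of the Laurent coefficients ensures that $f$ cannot produce spurious cancellations that would allow a cluster variable to map to a non-cluster-variable Laurent monomial, which is what makes the inductive step go through cleanly.
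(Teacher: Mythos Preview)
The paper does not supply its own proof of this proposition: it is stated with the citation \cite{ASS} and no proof environment follows. So there is nothing in the present paper to compare your proposal against; the authors simply import the result from Assem--Schiffler--Shamchenko.

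That said, a few remarks on your sketch as a standalone argument. The overall architecture (propagate from one good seed to all seeds by induction on mutation distance, using that a $\mathbb{Z}$-algebra map carries the exchange relation to an exchange relation) is the standard one and is essentially what \cite{ASS} does. Two points deserve tightening. First, your appeal to positivity from \cite{GHKK} is unnecessary and anachronistic: the original \cite{ASS} argument predates that result and does not need it. The reason $f(x_k'')$ is forced to be the mutated variable in the target cluster is purely algebraic---applying $f$ to the exchange identity $x_k x_k'' = M_+ + M_-$ and comparing with the exchange identity in $f(\mathbf{x}')$ pins $f(x_k'')$ down, with no positivity required. Second, in deducing $B(f(\mathbf{x}'))=\pm B'$ you obtain a priori only that each \emph{column} agrees up to sign; the uniformity of the sign across all columns uses the standing hypothesis that $B$ is indecomposable (stated just after the mutation rules in Section~\ref{preli}). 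You allude to this as ``a global invariant of $f$'' but do not say why; the clean argument is that if columns $i$ and $j$ had opposite signs and $b_{ij}\neq 0$, the skew-symmetry (or skew-symmetrizability) of $B(f(\mathbf{x}'))$ would fail. With those two adjustments your outline would match the proof in \cite{ASS}.
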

\begin{Remark}\label{rm2}
For any direct cluster automorphism $f:\mathcal{A}\rightarrow \mathcal{A}$, there exists a mutation
sequence $\mu_{\mathbf{i}}$ and a permutation $\sigma\in S_n$ such that $f(\mathbf{x})=\sigma(\mu_{\mathbf{i}}(\mathbf{x}))$
and $B(\mu_{\mathbf{i}}(\mathbf{x}))=B^{\sigma^{-1}}$ by definitions.
\end{Remark}
\begin{Corollary}[\cite{ASS}]
Let $f:\mathcal{A}\rightarrow \mathcal{A}$ be a cluster automorphism of $\mathcal{A}$. Fix a seed $(\mathbf{x'},B')$
satisfying $B(f(\mathbf{x}'))=B'$ or $-B'$.
Then
\begin{enumerate}
\item[(i)] if $B(f(\mathbf{x}'))=B'$, then for any seed $(\mathbf{x''},B'')$ of $\mathcal{A}$, we have $B(f(\mathbf{x}''))=B''$,
\item[(ii)] if $B(f(\mathbf{x}'))=-B'$, then for any seed $(\mathbf{x''},B'')$ of $\mathcal{A}$, we have $B(f(\mathbf{x}''))=-B''$.
\end{enumerate}
\end{Corollary}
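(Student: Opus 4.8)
The plan is to propagate the sign condition along mutations, exploiting that every seed of $\mathcal{A}$ is mutation-equivalent to the fixed seed $(\mathbf{x}',B')$. Writing $\epsilon=+1$ in case (i) and $\epsilon=-1$ in case (ii), the assertion to prove is uniformly that $B(f(\mathbf{x}''))=\epsilon B''$ for \emph{every} seed $(\mathbf{x}'',B'')$. Since any such seed is reached from $(\mathbf{x}',B')$ by a finite mutation sequence, I would induct on the length of this sequence: the base case is the hypothesis at $(\mathbf{x}',B')$, and it suffices to show that the equality $B(f(\mathbf{x}''))=\epsilon B''$ is preserved under a single mutation $\mu_k$.

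For the inductive step, the central point is that $f$ commutes with \emph{labeled} mutation at every seed. By part (ii) of the preceding Proposition, $f$ satisfies the compatibility condition (2) of Definition \ref{cluauto} at the seed $(\mathbf{x}'',B'')$ as well. Since $\mu_k$ alters only the $k$-th entry of a labeled cluster and $f$ acts entrywise, comparing entries gives $f(\mu_k(\mathbf{x}''))=\mu_k(f(\mathbf{x}''))$ as labeled clusters: for $i\neq k$ both sides have $i$-th entry $f(x_i'')$, while for $i=k$ the compatibility condition identifies the two $k$-th entries. Moreover $f(\mathbf{x}'')$ is again a cluster of $\mathcal{A}$ by condition (iii), so by Proposition \ref{clu} its labeled exchange matrix $B(f(\mathbf{x}''))$ is well defined, and the labeled seed $\mu_k(f(\mathbf{x}''))$ has exchange matrix $\mu_k\bigl(B(f(\mathbf{x}''))\bigr)$. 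Combining these yields the transport rule
\[ B\bigl(f(\mu_k(\mathbf{x}''))\bigr)=\mu_k\bigl(B(f(\mathbf{x}''))\bigr). \]

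It then remains to feed the inductive hypothesis into this rule, and the only algebraic input needed is the sign-equivariance of matrix mutation, $\mu_k(-B)=-\mu_k(B)$, which is immediate from the explicit formula for $\mu_k(B)$ (the $i=k$ or $j=k$ entries and the term $\mathrm{sgn}(b_{ik})[b_{ik}b_{kj}]_+$ both change sign consistently). Assuming $B(f(\mathbf{x}''))=\epsilon B''$, the transport rule gives
\[ B\bigl(f(\mu_k(\mathbf{x}''))\bigr)=\mu_k(\epsilon B'')=\epsilon\,\mu_k(B''), \]
and since $\mu_k(B'')$ is exactly the exchange matrix of the mutated seed $\mu_k(\mathbf{x}'',B'')$, the relation $B(f(\,\cdot\,))=\epsilon(\,\cdot\,)$ is preserved at the new seed. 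This closes the induction and establishes (i) and (ii) at once.

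I expect the main obstacle to be purely bookkeeping rather than conceptual: one must treat $f(\mathbf{x}'')$ as a \emph{labeled} cluster, so that $B(f(\mathbf{x}''))$ is genuinely compared with $B''$ under the labeling induced by $f$, and one must apply the compatibility condition at position $k$ on both sides so that $f(\mu_k(\mathbf{x}''))$ and $\mu_k(f(\mathbf{x}''))$ agree as tuples and not merely as unlabeled seeds. Once this labeled form of ``$f$ commutes with mutation'' is secured, case (ii) needs no extra work beyond the one-line sign-equivariance of $\mu_k$.
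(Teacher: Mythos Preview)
The paper does not actually prove this Corollary; it is quoted from \cite{ASS} without argument, so there is no in-paper proof to compare against. Your proposal is correct and is precisely the standard argument one would expect: induct along a mutation sequence from the fixed seed, use that $f$ commutes with mutation at every seed (granted by the preceding Proposition), and invoke the sign-equivariance $\mu_k(-B)=-\mu_k(B)$ to handle case (ii). Your bookkeeping remarks about treating $f(\mathbf{x}'')$ as a labeled cluster so that the comparison $B(f(\mathbf{x}''))=\epsilon B''$ is meaningful are apt and address the only genuine subtlety.
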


Following from these results from \cite{ASS}, cluster automorphisms are classified into two kinds:
one $f$ is called {\bf direct cluster automorphism} if it satisfies $B(f(\mathbf{x'}))=B'$ for any $(\mathbf{x'},B')$,
the other one $f$ is called {\bf inverse cluster automorphism} if it satisfies $B(f(\mathbf{x}))=-B$ for any $(\mathbf{x'},B')$.
Let $\mathrm{Aut}(\mathcal{A})$, $\mathrm{Aut}^+(\mathcal{A})$, and
$\mathrm{Aut}^-(\mathcal{A})$ denote the sets of all cluster automorphisms, direct cluster automorphisms and inverse cluster
automorphisms, respectively. Obviously, they form groups under compositions respectively. Moreover, these groups have following properties.

\begin{Proposition}[\cite{ASS}]\label{cor1} The direct cluster automorphism group $\mathrm{Aut}^+(\mathcal{A})$ is a normal subgroup of $\mathrm{Aut}(\mathcal{A})$
of index at most two. Therefore
the direct cluster automorphism group $\mathrm{Aut}^+(\mathcal{A})$ is a finite group if and only if $\mathrm{Aut}(\mathcal{A})$
is a finite group.
\end{Proposition}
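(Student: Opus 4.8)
The plan is to construct a \emph{sign} homomorphism $\epsilon\colon \mathrm{Aut}(\mathcal{A}) \to \{\pm 1\}$ whose kernel is precisely $\mathrm{Aut}^+(\mathcal{A})$; normality and the index bound then follow immediately from the first isomorphism theorem, and the finiteness equivalence is a routine consequence of the index being finite. The substance of the argument is entirely in verifying that $\epsilon$ is well defined and multiplicative.

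First I would define $\epsilon$. By the preceding Corollary, for any cluster automorphism $f$ the sign appearing in $B(f(\mathbf{x}')) = \pm B'$ is independent of the chosen seed $(\mathbf{x}',B')$: it is $+1$ for every seed if $f$ is direct and $-1$ for every seed if $f$ is inverse. Hence setting $\epsilon(f) = +1$ when $f \in \mathrm{Aut}^+(\mathcal{A})$ and $\epsilon(f) = -1$ when $f \in \mathrm{Aut}^-(\mathcal{A})$ gives a well-defined map on all of $\mathrm{Aut}(\mathcal{A})$. Using Proposition \ref{clu}, which lets me write $B(\mathbf{y})$ for the exchange matrix attached to any cluster $\mathbf{y}$, the two cases are uniformly captured by the identity $B(f(\mathbf{y})) = \epsilon(f)\, B(\mathbf{y})$ for every cluster $\mathbf{y}$.

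Next I would check that $\epsilon$ is a group homomorphism. Given $f, g \in \mathrm{Aut}(\mathcal{A})$ and any cluster $\mathbf{y}$, the image $g(\mathbf{y})$ is again a cluster, so applying the displayed identity twice yields
\[
B\bigl((f\circ g)(\mathbf{y})\bigr) = B\bigl(f(g(\mathbf{y}))\bigr) = \epsilon(f)\, B\bigl(g(\mathbf{y})\bigr) = \epsilon(f)\,\epsilon(g)\, B(\mathbf{y}).
\]
Therefore $\epsilon(f\circ g) = \epsilon(f)\,\epsilon(g)$, so $\epsilon$ is a homomorphism into $\{\pm 1\}$. Its kernel is exactly the set of $f$ with $B(f(\mathbf{y})) = B(\mathbf{y})$ for all $\mathbf{y}$, that is, $\mathrm{Aut}^+(\mathcal{A})$. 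Consequently $\mathrm{Aut}^+(\mathcal{A})$ is a normal subgroup of $\mathrm{Aut}(\mathcal{A})$, and since the image of $\epsilon$ is a subgroup of $\{\pm 1\}$ it has order $1$ or $2$; thus the index $[\mathrm{Aut}(\mathcal{A}) : \mathrm{Aut}^+(\mathcal{A})]$ equals $|\mathrm{im}\,\epsilon| \le 2$.

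Finally the finiteness equivalence follows: if $\mathrm{Aut}(\mathcal{A})$ is finite so is its subgroup $\mathrm{Aut}^+(\mathcal{A})$, while if $\mathrm{Aut}^+(\mathcal{A})$ is finite then $|\mathrm{Aut}(\mathcal{A})| \le 2\,|\mathrm{Aut}^+(\mathcal{A})|$ is finite because the index is at most two. The only point demanding care---and the closest thing to an obstacle---is the bookkeeping in the homomorphism check: one must invoke Proposition \ref{clu} to make sense of $B(f(\mathbf{y}))$ and the Corollary to guarantee that the sign is seed-independent, so that the two-fold application of the sign identity is legitimate for an arbitrary cluster rather than for a single fixed seed.
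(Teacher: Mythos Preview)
Your proof is correct and is the standard argument: build the sign homomorphism to $\{\pm 1\}$, identify its kernel as $\mathrm{Aut}^+(\mathcal{A})$, and read off normality, the index bound, and the finiteness equivalence. Note, however, that the paper does not supply its own proof of this proposition---it is quoted from \cite{ASS} without argument---so there is nothing in the present paper to compare your approach against; your write-up is essentially the proof one finds in the original source.
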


Cluster automorphisms are defined above via unlabeled seeds. However labeled seeds are more relevant to periods in
cluster algebras, this motivates us to define strict cluster automorphisms by labeled seeds as special cluster
automorphisms in this section.
\begin{Definition}\label{strcluauto} Let $\mathcal{A}=\mathcal{A}(\mathbf{x},B)$ be a cluster algebra, and $f:\mathcal{A}\rightarrow \mathcal{A}$
be an automorphism of $\mathbb{Z}$-algebras. If there is a labeled seed $(\mathbf{x'},B')$ of $\mathcal{A}$ such that
\begin{enumerate}
\item[(1)]  $f(\mathbf{x'})$ is a labeled cluster, i.e., there exists an mutation sequence $\mu_{i_k}\dots\mu_{i_2}\mu_{i_1}$ such that
$(f(\mathbf{x'}), B(f(\mathbf{x'})))=\mu_{i_k}\dots\mu_{i_2}\mu_{i_1}(\mathbf{x'},B')$ as labeled seeds,

\item[(2)] $f$ is compatible with mutations,
\end{enumerate}
then f is called a \textbf{strict cluster automorphism} of $\mathcal{A}$.
\end{Definition}
Similarly, if $f$ is strict cluster automorphism such that $B(f(\mathbf{x}))=B$, then $f$ is called a {\bf strict
direct cluster automorphism} of $\mathcal{A}$. Let $\mathrm{SAut}^+(\mathcal{A})$ be the set of all strict direct cluster
automorphisms of $\mathcal{A}$. It is obvious that $\mathrm{SAut}^+(\mathcal{A})$ is a subgroup of $\mathrm{Aut}^+(\mathcal{A})$.

\section{Mutation-periodic groups and permutation-periodic groups}\label{3}
\subsection{Periodicities in cluster algebras}We recall some basic definitions and properties on periodicities in cluster algebras in this subsection.
Let $I$ be a subset of $[1,n]$. An ordered sequence $\mathbf{i}=(i_1,i_2,\dots,i_s)$ is called an $I$-sequence if
$i_p\in I$ for any $p\in [1,s]$. Moreover, $\mathbf{i}$ is called an essential $I$-sequence if it is
an $I$-sequence and satisfies $i_p\neq i_{p+1}$ for any $p\in [1,n-1]$.
For any $I$-sequence $\mathbf{i}=(i_1,i_2,\dots,i_s)$, we denote by $\mathbf{i}^{-1}$ the
$I$-sequence $(i_s,i_{s-1},\dots,i_1)$, and we define $\mu_{\mathbf{i}}$ to be the composition
$\mu_{i_s}\dots \mu_{i_2}\mu_{i_1}$ of mutations. Note that $\mu_{\mathbf{i}}\mu_{\mathbf{i}^{-1}}=\mathrm{id}$
since every $\mu_k$ is an involution.

For a labeled seed  $(\mathbf{x},B)$  of rank $n$  and a permutation $\sigma \in S_n$, we
define the action  of $\sigma$ satisfying that
$\sigma(\mathbf{x},B)$(or written as $(\mathbf{x},B)^{\sigma} :=(\mathbf{x}^{\sigma},B^{\sigma})$,
where $\mathbf{x}^{\sigma}=(x_{\sigma(i)})_{i\in I},B^{\sigma}=(b_{\sigma(i)\sigma(j)})_{i,j\in I}.$

\begin{Definition}[\cite{Nak}]  Let $\mathcal{A}=\mathcal{A}(\mathbf{x},B)$ be a cluster
algebra. Let $(\mathbf{x}_{t},B_t)$ and $(\mathbf{x}_{t'},B_{t'})$  be two labeled seeds of
$\mathcal{A}$ and $\mathbf{i}=(i_1,i_2,\dots,i_s)$
be an $I$-sequence such that $(\mathbf{x}_{t'},B_{t'})=\mu_{\mathbf{i}}(\mathbf{x}_{t},B_t)$,
and $\sigma \in \mathcal{S}_n$.

(i) $\mathbf{i}$ is called a \textbf{$\sigma$-period }of $B_t$ if $b^{t'}_{\sigma(i)\sigma(j)}=b^t_{ij}$ holds
 for any $i,j\in I$; furthermore, if $\sigma=\mathrm{id}$, we simply call it a period of $B_{t}$.

(ii) $\mathbf{i}$ is called a \textbf{$\sigma$-period} of $(\mathbf{x}_{t},B_{t})$ if
\[b^{t'}_{\sigma(i)\sigma(j)}=b^t_{ij},\,\,\,\,\,x_{\sigma(i),t'}=x_{i,t}\]
hold for any $i,j\in I$; furthermore, if $\sigma=\mathrm{id}$, we simply call it a {\bf period} of $(\mathbf{x}_{t},B_{t})$.

\end{Definition}
If $\mathbf{i}$ is a period of $B$ (or $(\mathbf{x},B)$), we also call $\mu_{\mathbf{i}}$ a period of $B$ (or $(\mathbf{x},B)$)
without ambiguity. Let $\mathbf{i}=(i_1,i_2,\dots,i_s)$ and $\mathbf{j}=(j_1,j_2,\dots,j_p)$ be two arbitrary $I$-sequences.
Suppose that
$(\mathbf{x}_{t}, B_{t})=\mu_{\mathbf{j}}(\mathbf{x},B)$ and $\mathbf{i}$
is a period of $(\mathbf{x},B)$.
Then we have
 \[\mu_{\mathbf{j}}\mu_{\mathbf{i}}\mu_{\mathbf{j}^{-1}}(\mathbf{x}_{t},B_{t})=
\mu_{\mathbf{j}}\mu_{\mathbf{i}}\mu_{\mathbf{j}^{-1}}\mu_{\mathbf{j}}(\mathbf{x},B)=
\mu_{\mathbf{j}}\mu_{\mathbf{i}}(\mathbf{x},B)
=(\mathbf{x}_{t},B_{t}),\]
where $\mathbf{j}^{-1}= (j_p, \dots, j_2, j_1)$.
Thus, the sequence $\mathbf{j}^{-1}\mathbf{i}\mathbf{j}$ is a period of
 $(\mathbf{x}_{t},B_{t})=\mu_{\mathbf{j}}(\mathbf{x},B)$,
which induces a natural bijection between the sets of periods of two labeled seeds.
Thus we may, without loss of generality, mainly consider periods of the initial labeled seed.
For a labeled seed $(\mathbf{x},B)$, we denote by $P(\mathbf{x},B)$ the set consisting of
all mutation sequences corresponding to periods of  $(\mathbf{x},B)$.

The Extension Theorem was partially formulated
by Keller in \cite{K} and was generalized by Plamondon.
Nakanishi proves Extension and Restriction Theorem of periodicities of labeled seeds for cluster
algebras with coefficients from the universal semifield.

Let $I\subset \tilde{I}$ be two index sets, and $B=(b_{ij})_{i,j\in I}$ is the principal submatrix
of a skew-symmetrizable matrix $\tilde{B}=(\tilde{b}_{ij})_{i,j\in \tilde{I}}$ such that $B=\tilde{B}|_I$
under the restriction of the index set $I$. In this case, $B$ is called the {\bf $I$-restriction} of $\tilde{B}$
and $\tilde{B}$ is called the {\bf $\tilde{I}$-extension} of $B$.

 Also, we  call the labeled seed $((x_i)_{i\in I}, B)$
the \textbf{full subseed} of  $((x_i)_{i\in \tilde{I}}, \tilde B)$.
 In the case $\tilde B$ is skew-symmetric, the corresponding quiver $Q$ of the full subseed $((x_i)_{i\in I}, B)$ is just the full sub-quiver of the quiver $\tilde Q$ of the seed $((x_i)_{i\in \tilde I}, \tilde B)$.

\begin{Theorem}[\cite{Nak}]\label{RE} For $I\subset \tilde{I}$, let $B$ be the $I$-restriction of the skew-symmetrizable matrix $\tilde{B}$,
$\tilde{B}$ be the $\tilde{I}$-extension of $B$, and $\sigma\in S_n$.

(i)(Restriction) Assume that an $I$-sequence $\mathbf{i}=(i_1,i_2,\dots,i_s)$ is a $\sigma$-period of the
labeled seed $(\tilde{\mathbf{x}},\tilde{B})$ in $\mathcal{A}(\tilde{\mathbf{x}},\tilde{B})$, then $\mathbf{i}$
is also an $\sigma$-period of the labeled seed $(\mathbf{x},B)$ in $\mathcal{A}(\mathbf{x},B)$.

(ii)(Extension) Assume that an $I$-sequence $\mathbf{i}=(i_1,i_2,\dots,i_s)$ is a $\sigma$-period of the
labeled seed $(\mathbf{x},B)$ in $\mathcal{A}(\mathbf{x},B)$, then $\mathbf{i}$ is also an $\sigma$-period
of the labeled seed $(\tilde{\mathbf{x}},\tilde{B})$ in $\mathcal{A}(\tilde{\mathbf{x}},\tilde{B})$.

\end{Theorem}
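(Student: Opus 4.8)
The plan is to treat the two directions separately: the restriction statement has a short self-contained proof, whereas the extension statement is the substantial one and I would route it through the separation formula of Fomin--Zelevinsky.

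Everything rests on one combinatorial observation: restriction to $I$ commutes with mutation in a direction $k\in I$. Indeed, for $i,j,k\in I$ the mutation rule for $\mu_k(\tilde B)$ only refers to the entries $\tilde b_{ij},\tilde b_{ik},\tilde b_{kj}$, all of which already lie in the principal submatrix on $I$; hence $\mu_k(\tilde B)|_I=\mu_k(\tilde B|_I)=\mu_k(B)$. Iterating along $\mathbf i=(i_1,\dots,i_s)$ yields $(\mu_{\mathbf i}\tilde B)|_I=\mu_{\mathbf i}B$, so the $I$-block of $\tilde B$ evolves exactly as the sub-seed matrix. Reading the matrix half of the $\sigma$-period condition for $\tilde B$ only on indices in $I$ is then literally the matrix half of the $\sigma$-period condition for $B$ (note that, since $\mathbf i$ is an $I$-sequence and the variables outside $I$ are never mutated, $\sigma$ must preserve $I$). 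This settles the matrix part in both directions at once.

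For the cluster variables in the restriction direction I would introduce the $\mathbb Z$-algebra homomorphism $\pi$ fixing $x_i=\tilde x_i$ for $i\in I$ and sending $\tilde x_f\mapsto 1$ for $f\in\tilde I\setminus I$. Because $\mathbf i$ never mutates outside $I$, each variable $\tilde x_f$ is constant along the sequence, so $\pi$ kills it at every stage. An induction along $\mathbf i$, combined with the matrix restriction lemma above, shows that $\pi$ intertwines the two dynamics: at every vertex $t$ on the path and for each $j\in I$ one has $\pi(\tilde x_{j,t})=x_{j,t}$, since in the exchange relation at $k\in I$ the extra factors $\prod_{f\notin I}\tilde x_f^{[\pm\tilde b^{t}_{fk}]_+}$ are sent to $1$ while the surviving factors reproduce the exchange relation of the sub-seed. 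Applying $\pi$ to the identity $\tilde x_{\sigma(j),t'}=\tilde x_{j,t}$ gives $x_{\sigma(j),t'}=x_{j,t}$, which completes the restriction statement.

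The extension direction is the hard part, and here $\pi$ is useless because it is not injective. I would instead exploit that the $g$-vectors $g_{j,t}$ and $F$-polynomials $F_{j,t}$ attached to $j\in I$ along $\mathbf i$ depend only on $B$ and on $\mathbf i$; thus they are shared by $\mathcal A(\mathbf x,B)$ and by the variables $\tilde x_{j,t}$ $(j\in I)$ of $\mathcal A(\tilde{\mathbf x},\tilde B)$, the latter being the cluster variables of a cluster algebra over $B$ whose coefficients are the monomials in the variables $\tilde x_f$ read off from the off-diagonal block of $\tilde B$ (one checks that the blocks of $\tilde B$ involving $\tilde I\setminus I$ never feed into the exchange relations at directions in $I$). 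It therefore suffices to upgrade the seed periodicity of $(\mathbf x,B)$ to the simultaneous periodicity of the pairs $(g_{j,t},F_{j,t})$ and then to read the separation formula backwards, obtaining $\tilde x_{\sigma(j),t'}=\tilde x_{j,t}$; the variables $\tilde x_f$ are periodic for free, and the complementary blocks of $\tilde B$ are controlled by the tropical $c$-vector dynamics together with skew-symmetrizability, whose periodicity is synchronized with that of the $I$-block. The main obstacle is exactly this upgrading step, namely recovering the periodicity of $g_{j,t}$ and $F_{j,t}$ from the periodicity of the trivial-coefficient cluster variable, which is not automatic once $B$ fails to separate $g$-vectors; resolving it is precisely what forces one to work with sufficiently rich coefficients, the universal semifield in Nakanishi's treatment, where sign-coherence distinguishes the $g$-vectors so that periodicity of $(g_{j,t},F_{j,t})$, hence of the whole seed $(\tilde{\mathbf x},\tilde B)$, follows.
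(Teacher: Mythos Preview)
The paper does not supply its own proof of this theorem; it is quoted verbatim from Nakanishi~\cite{Nak}, with only the surrounding remark that Nakanishi's argument passes through cluster algebras with coefficients in the universal semifield. So there is nothing in the present paper to compare your argument against beyond that hint.

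Your treatment of part~(i) is correct and essentially complete. The commutation $\mu_k(\tilde B)|_I=\mu_k(B)$ for $k\in I$, together with the specialization homomorphism $\pi$ sending $\tilde x_f\mapsto 1$ for $f\notin I$, is exactly the standard proof, and your observation that $\sigma$ must fix $\tilde I\setminus I$ pointwise (since those variables are untouched by an $I$-sequence) is the right way to make sense of ``$\sigma$-period'' on both sides simultaneously.

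For part~(ii) your plan is the same route Nakanishi takes: recognize the variables $\tilde x_{j,t}$ for $j\in I$ as cluster variables of a cluster algebra over $B$ with geometric coefficients coming from the rows of $\tilde B$ indexed by $\tilde I\setminus I$, invoke the separation formula to reduce to periodicity of the pairs $(g_{j,t},F_{j,t})$, and then argue that periodicity of the trivial-coefficient seed forces periodicity of those pairs. You have correctly located the only real difficulty, but your final sentence concedes rather than resolves it: saying that ``sign-coherence distinguishes the $g$-vectors'' and that one must ``work with sufficiently rich coefficients'' is a description of what Nakanishi does, not an execution of it. A proof would need either an explicit citation of the synchronization lemma (periodicity of $(\mathbf x,B)$ over the universal semifield $\Rightarrow$ periodicity of $F$-polynomials and $g$-vectors), or the argument itself. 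Likewise, your remark that the off-$I$ blocks of $\tilde B$ are ``controlled by the tropical $c$-vector dynamics'' is correct in spirit but needs unpacking: under $I$-mutations the block indexed by $(\tilde I\setminus I)\times(\tilde I\setminus I)$ does change, and its periodicity is not automatic from the $I$-block alone without going through the $c$-vectors. As written, part~(ii) is a sound outline rather than a proof.
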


The following example gives periods of seeds of rank two, then one can determine periods of full subseeds of rank two of any seeds of larger rank, which we will make use of in the following sections.
\begin{Example}\label{rank2}
Let $\mathcal{A}$ be a skew-symmetric cluster algebra of rank $2$ with the initial labeled seed $(\mathbf{x},B)$, and $I=[1,2]$.

(1). If $\mathbf{x}=(x_1,x_2)$, and $B=\begin{bmatrix}  0&0\\0&0  \end{bmatrix}$ is the exchange matrix whose corresponding
cluster quiver is of type $A_1\times A_1$. Then it is obvious that $\mu_1\mu_2=\mu_2\mu_1$ and periods of $(\mathbf{x},B)$
are exactly of the form $(1,2,1,2,\dots,1,2)_{2m \,\,elements}$ or $(2,1,2,1,\dots,2,1)_{2m \,\,elements}$ for some $m\in \mathbb{Z}_{>0}$.

(2). If $\mathbf{x}=(x_1,x_2)$, and $B=\begin{bmatrix}  0&1\\-1&0  \end{bmatrix}$ is the exchange matrix whose corresponding
cluster quiver is of type $A_2$. Notice that $\mathcal{A}$ is of finite type, all labeled seeds of $\mathcal{A}$
are shown in Figure 1.
There are ten various labeled seeds and five unlabeled seeds. Let $I=[1,2]$, then
the two $I$-sequences $\mathbf{i}=(1,2,1,2,1)$ and $\mathbf{j}=(2,1,2,1,2)$ are $(12)$-periods of the labeled seed
$(\mathbf{x},B)$, and the two $I$-sequences $\mathbf{p}=(1,2,1,2,1,2,1,2,1,2)$ and $\mathbf{q}=(2,1,2,1,2,1,2,1,2,1)$
are periods of $(\mathbf{x},B)$. Since $\mu_{\mathbf{p}}\mu_{\mathbf{q}}=id=\mu_{\mathbf{q}}\mu_{\mathbf{p}}$, any
period of $(\mathbf{x},B)$ is copies of $\mathbf{p}$ or $\mathbf{q}$. The two $I$-sequences $\mathbf{a}=(1,2)$ and
$\mathbf{b}=(2,1)$ are two periods of the exchange matrix $B$, and any period of $B$ is copies of $\mathbf{a}$ or $\mathbf{b}$.

On the other hand, by the Extension Theorem, for any cluster quiver which has a simple edge connecting two vertices $i$ and $j$, then actions on the labeled seed
of two mutation sequences $\mu_i\mu_j\mu_i\mu_j\mu_i$ and $\mu_j\mu_i\mu_j\mu_i\mu_j$ are both equivalent to the action
of the permutation $(ij)$ .

(3). If $\mathbf{x}=(x_1,x_2)$, and $B=\begin{bmatrix}  0&b\\-b&0  \end{bmatrix}$  with $b\geqslant 2$, then it is
well-known that any essential $I$-sequence is not a period of $(\mathbf{x},B)$, see [\cite{FZ4}, Theorem 8.8] or Theorem \ref{fintype}.

\begin{center}
 \begin{tikzpicture}

    \draw (0,0) node (1)          [label=left:$\ZK \ZK x_1\DP x_2\YK \DP B \YK$] {}
        -- node [left] {$2$}++(216:2.0cm) node (2) [label=left:$\ZK \ZK x_1\DP \frac{ 1\PL x_1}{ x_2} \YK \DP -B \YK$] {}
        -- node [left] {$1$}++(252:2.0cm) node (3) [label=left:$\ZK \ZK\frac{1\PL x_1\PL x_2}{x_1x_2}\DP \frac{ 1\PL x_1}{ x_2} \YK \DP B \YK$] {}
        -- node [left] {$2$}++(288:2.0cm) node (4) [label=left:$\ZK \ZK\frac{1\PL x_1\PL x_2}{x_1x_2}\DP \frac{ 1\PL x_2}{ x_1} \YK \DP -B \YK$] {}
        -- node [right] {$1$}++(324:2.0cm) node (5) [label=left:$\ZK \ZK x_2\DP \frac{ 1\PL x_2}{ x_1} \YK \DP B \YK$] {}
        -- node [below] {$2$}++(0:2.0cm) node (6)  [label=right:$\ZK \ZK x_2\DP x_1 \YK \DP -B \YK$] {}
        -- node [left] {$1$}++(36:2.0cm) node (7) [label=right:$\ZK \ZK \frac{1\PL x_1}{x_2}\DP x_1 \YK \DP B \YK$] {}
        -- node [left] {$2$}++(72:2.0cm) node (8) [label=right:$\ZK \ZK \frac{1\PL x_1}{x_2}\DP \frac{1\PL x_1\PL x_2}{x_1x_2} \YK \DP -B \YK$] {}
        -- node [left] {$1$}++(108:2.0cm) node (9) [label=right:$\ZK \ZK \frac{1\PL x_2}{x_1}\DP \frac{1\PL x_1\PL x_2}{x_1x_2} \YK \DP B \YK$] {}
        -- node [left] {$2$}++(144:2.0cm) node (10) [label=right:$\ZK \ZK \frac{1\PL x_2}{x_1}\DP x_2 \YK \DP -B \YK$] {}
        -- node [above] {$1$}(1);
\foreach \i in {1,2,...,10}
	{
	  \draw[fill=black] (\i) circle (0.15em);	
	}

\end{tikzpicture}
\end{center}
\begin{center}
 \begin{tikzpicture}
node [label=90:$\mathrm{Figure}\,\, \mathrm{1}$]
\end{tikzpicture}
\end{center}

\end{Example}

\subsection{Mutation-periodic groups}\label{main}
In the sequel, we define mutation-periodic groups for exchange matrices and labeled seeds. For
 a labeled seed $(\mathbf{x},B)$, let $G(B)$ be the opposite groups of the subgroup of the mutation group consisting of
 all mutation sequences which keep $B$ invariant, and $H(\mathbf{x},B)$ be the opposite group of the subgroup of
 the mutation group consisting of all mutation sequences which keep $(\mathbf{x},B)$ invariant i.e.,
\[G(B)=<\mu_{\mathbf{i}},\mathbf{i}\; \mbox{is a period of}\; B>^{op};  \]
\[H(\mathbf{x},B)=<\mu_{\mathbf{i}},\mathbf{i}\; \mbox{is a period of}\; (\mathbf{x},B)>^{op}. \]

Since a (labeled) seed is determined by its cluster, we may denote $H(\mathbf{x},B)$ by $H(\mathbf{x})$ for simplicity.
Let $\mu_{\mathbf{i}}=\mu_{i_s}\dots\mu_{i_2}\mu_{i_1}$ and $\mu_{\mathbf{j}}= \mu_{j_p}\dots\mu_{j_2}\mu_{j_1}$
be two mutation sequences in $G(B)(\mbox{resp.}\; H(\mathbf{x}) )$, the multiplication of $G(B)(\mbox{resp.} \; H(\mathbf{x}))$ is given by
\[\mu_{\mathbf{i}}\circ \mu_{\mathbf{j}} = \mu_{j_p}\dots\mu_{j_2}\mu_{j_1}\mu_{i_s}\dots\mu_{i_2}\mu_{i_1}.\]
Note that $id \in G(B)$ and for any $\mu_{\mathbf{i}}=\mu_{i_s}\dots\mu_{i_2}\mu_{i_1}\in G(B)$, it is obvious that $\mu_{i_1}\mu_{i_2}\dots\mu_{i_s}\in G(B)$
and \[\mu_{i_s}\dots\mu_{i_2}\mu_{i_1}\circ \mu_{i_1}\mu_{i_2}\dots\mu_{i_s}=id=\mu_{i_1}\mu_{i_2}\dots\mu_{i_s}\circ \mu_{i_s}\dots\mu_{i_2}\mu_{i_1}.\]
Thus for each $\mu_{\mathbf{i}}=\mu_{i_s}\dots\mu_{i_2}\mu_{i_1}\in G(B)$, $\mu_{\mathbf{i}}^{-1}= \mu_{i_1}\mu_{i_2}\dots\mu_{i_s}=\mu_{\mathbf{i}^{-1}}$.

\begin{Lemma} Let $I\subset \tilde{I}$, and let $B$ be the $I$-restriction of the skew-symmetric matrix $\tilde{B}$,
then $H(\mathbf{x},B)$ is a subgroup of $H(\tilde{\mathbf{x}},\tilde{B})$.
\end{Lemma}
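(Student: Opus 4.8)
The plan is to reduce the statement to the Extension part of Theorem \ref{RE} via the natural inclusion of mutation groups induced by $I\subset\tilde I$. Writing $n=|I|$ and $\tilde n=|\tilde I|$, the inclusion of index sets sends each generator $\mu_k$ with $k\in I$ to the corresponding generator $\mu_k$ of the ambient mutation group, and this gives an injective homomorphism from the mutation group on the index set $I$ into the mutation group on $\tilde I$ (the factors embed, so injectivity is automatic). Since both $H(\mathbf{x},B)$ and $H(\tilde{\mathbf{x}},\tilde B)$ are defined as opposite groups of stabilizer subgroups inside these mutation groups, and the inclusion carries the reversed composition on one side to the reversed composition on the other, it respects the opposite-group multiplication. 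So it suffices to check that this embedding carries $H(\mathbf{x},B)$ into $H(\tilde{\mathbf{x}},\tilde B)$.

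Because $H(\mathbf{x},B)$ is by definition generated by the elements $\mu_{\mathbf{i}}$ with $\mathbf{i}$ a period of $(\mathbf{x},B)$, and the embedding is a group homomorphism, it is enough to show that each such generator lands in $H(\tilde{\mathbf{x}},\tilde B)$. Fix a period $\mathbf{i}=(i_1,\dots,i_s)$ of $(\mathbf{x},B)$; this is precisely an $I$-sequence which is an $\mathrm{id}$-period of the labeled seed $(\mathbf{x},B)$ in the sense of the definition preceding Theorem \ref{RE}. Since $\tilde B$ is skew-symmetric, it is in particular skew-symmetrizable, so Theorem \ref{RE}(ii) applies with $\sigma=\mathrm{id}$ and yields that $\mathbf{i}$ is also an $\mathrm{id}$-period of $(\tilde{\mathbf{x}},\tilde B)$. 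Hence $\mu_{\mathbf{i}}$ keeps $(\tilde{\mathbf{x}},\tilde B)$ invariant, i.e. it is a generator of $H(\tilde{\mathbf{x}},\tilde B)$.

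Closure then follows formally: the inverse of a generator $\mu_{\mathbf{i}}$ is $\mu_{\mathbf{i}^{-1}}$, and $\mathbf{i}^{-1}$ is again an $I$-sequence that is a period of $(\mathbf{x},B)$ (hence of $(\tilde{\mathbf{x}},\tilde B)$ by the same application of the Extension Theorem), while products of generators are handled by the homomorphism property. Therefore the image of $H(\mathbf{x},B)$ under the embedding lies inside $H(\tilde{\mathbf{x}},\tilde B)$, and $H(\mathbf{x},B)$ is realized as a subgroup of $H(\tilde{\mathbf{x}},\tilde B)$.

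The only genuinely nontrivial input is the Extension Theorem, which is quoted, so the remaining work is bookkeeping. The one point that deserves care — and which I expect to be the main, if modest, obstacle — is making precise the sense in which $H(\mathbf{x},B)$ sits inside $H(\tilde{\mathbf{x}},\tilde B)$: one must fix the embedding of the underlying mutation groups coming from $I\subset\tilde I$, observe that a period of the smaller seed, being an $I$-sequence, is an admissible mutation sequence for the larger seed as well, and check that the opposite-group conventions on the two sides match under this embedding. Once this identification is in place, the Extension Theorem supplies all the real content.
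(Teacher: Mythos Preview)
Your proof is correct and takes essentially the same approach as the paper, which simply states that the lemma follows immediately from Theorem~\ref{RE}. You have just unpacked the bookkeeping (the inclusion of mutation groups and the compatibility with the opposite-group convention) that the paper leaves implicit.
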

\begin{proof} This follows from Theorem \ref{RE} immediately.
\end{proof}

\begin{Lemma}\label{norm1}
For a labeled seed $(\mathbf{x},B)$ of a cluster algebra $\mathcal{A}$,
the mutation-periodic group $H(\mathbf{x})$ of $(\mathbf{x},B)$ is a normal subgroup of
$G(B)$.
\end{Lemma}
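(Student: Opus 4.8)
The plan is to verify normality directly from the definitions, the only real content being a transport-of-periods argument. First I would record the elementary inclusion $H(\mathbf{x})\subseteq G(B)$: every sequence that fixes the whole seed $(\mathbf{x},B)$ in particular fixes $B$, so $H(\mathbf{x})$ is a subgroup of $G(B)$ (inverses and products being handled by the opposite-group structure already set up above). To prove normality I would fix $\mu_{\mathbf{j}}\in G(B)$ and $\mu_{\mathbf{i}}\in H(\mathbf{x})$ and show that the conjugate of $\mu_{\mathbf{i}}$ by $\mu_{\mathbf{j}}$ in the opposite group $G(B)$ again lies in $H(\mathbf{x})$. Reading the multiplication of $G(B)$ carefully, this conjugate is the element whose underlying composition of mutations is $\mu_{\mathbf{j}}^{-1}\mu_{\mathbf{i}}\mu_{\mathbf{j}}$, that is, the map applying $\mu_{\mathbf{j}}$, then $\mu_{\mathbf{i}}$, then $\mu_{\mathbf{j}}^{-1}$, consistent with the computation of $\mathbf{j}^{-1}\mathbf{i}\mathbf{j}$ recorded before the statement. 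Writing $(\mathbf{x}',B')=\mu_{\mathbf{j}}(\mathbf{x},B)$ and using that $\mathbf{j}$ is a period of $B$, we have $B'=B$; since matrix mutation is independent of the cluster, it then suffices to prove that $\mathbf{i}$ is a period of the seed $(\mathbf{x}',B)$, for then $\mu_{\mathbf{j}}^{-1}\mu_{\mathbf{i}}\mu_{\mathbf{j}}(\mathbf{x},B)=\mu_{\mathbf{j}}^{-1}(\mathbf{x}',B)=(\mathbf{x},B)$.

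The heart of the argument is therefore the claim that a period $\mathbf{i}$ of $(\mathbf{x},B)$ remains a period of any seed $(\mathbf{x}',B)$ sharing the same exchange matrix $B$. I would prove this by introducing the field automorphism $\Phi$ of $\mathcal{F}$ determined by $\Phi(x_k)=x'_k$, which is well defined because the cluster $\mathbf{x}'$ is again a free generating set of $\mathcal{F}$. The key observation is that $\Phi$ intertwines the two cluster patterns emanating from $(\mathbf{x},B)$ and from $(\mathbf{x}',B)$: by induction on the length of an arbitrary mutation sequence $\mathbf{k}$, if $\mu_{\mathbf{k}}(\mathbf{x},B)=(\mathbf{y},C)$ then $\mu_{\mathbf{k}}(\mathbf{x}',B)=(\Phi(\mathbf{y}),C)$. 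Indeed the two towers start from the same matrix $B$, matrix mutation depends only on the matrix so the resulting $C$ coincide at each step, and the exponents appearing in the exchange relation for a cluster variable are precisely the (common) matrix entries; hence applying the ring homomorphism $\Phi$ to an exchange relation of the first tower produces exactly the exchange relation of the second. Taking $\mathbf{k}=\mathbf{i}$ and using $\mu_{\mathbf{i}}(\mathbf{x},B)=(\mathbf{x},B)$ then yields $\mu_{\mathbf{i}}(\mathbf{x}',B)=(\Phi(\mathbf{x}),B)=(\mathbf{x}',B)$, which is the desired claim.

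Combining the two steps shows $\mu_{\mathbf{j}}^{-1}\mu_{\mathbf{i}}\mu_{\mathbf{j}}$ fixes $(\mathbf{x},B)$, so the conjugate lies in $H(\mathbf{x})$; applying the same reasoning to $\mathbf{j}^{-1}$, which is also a period of $B$ and hence lies in $G(B)$, covers the opposite conjugation direction and gives conjugation-invariance of $H(\mathbf{x})$ inside $G(B)$, so $H(\mathbf{x})\trianglelefteq G(B)$. I expect the main obstacle to be the intertwining step of the second paragraph — in particular, being careful that $\Phi$ is matrix-preserving along the \emph{whole} tower rather than only at the initial seed — together with the bookkeeping needed to identify the opposite-group conjugate with the composition $\mu_{\mathbf{j}}^{-1}\mu_{\mathbf{i}}\mu_{\mathbf{j}}$ under the conventions fixed above. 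Proposition \ref{clu} may be invoked as a sanity check that the seeds produced are honest seeds of $\mathcal{A}$, but it does not by itself replace the $\Phi$-argument, since equality of exchange matrices alone does not force equality of clusters.
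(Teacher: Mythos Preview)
Your proposal is correct and follows essentially the same route as the paper: both reduce normality to showing that a period $\mathbf{i}$ of $(\mathbf{x},B)$ is automatically a period of any seed $(\mathbf{x}',B)$ with the same exchange matrix, and then unwind the conjugate $\mu_{\mathbf{j}}\circ\mu_{\mathbf{i}}\circ\mu_{\mathbf{j}}^{-1}=\mu_{\mathbf{j}}^{-1}\mu_{\mathbf{i}}\mu_{\mathbf{j}}$ accordingly. The only difference is that the paper asserts this transport-of-periods step in one line (``$\mu_{\mathbf{i}}(\mathbf{x},B)=(\mathbf{x},B)$ implies $\mu_{\mathbf{i}}(\mathbf{x}',B)=(\mathbf{x}',B)$ for an arbitrary ordered set $\mathbf{x}'$ of $n$ independent variables''), whereas you spell it out carefully via the field automorphism $\Phi$ intertwining the two cluster patterns; your version is more explicit but not a different argument.
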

\begin{proof} Let $\mu_{\mathbf{j}}= \mu_{j_p}\dots\mu_{j_2}\mu_{j_1}$ and $\mu_{\mathbf{i}}=\mu_{i_s}\dots\mu_{i_2}\mu_{i_1}$
be two mutation sequences in $G(B)$ and $H(\mathbf{x},B)$, respectively. Since $\mu_{\mathbf{i}}(\mathbf{x},B)=(\mathbf{x},B)$
implies $\mu_{\mathbf{i}}(\mathbf{x'},B)=(\mathbf{x'},B)$ for an arbitrary ordered set $\mathbf{x}'$ of $n$ independent variables.
Without loss of generality, assume that $\mu_{\mathbf{j}}(\mathbf{x},B)=(\mathbf{x''},B)$, then $\mu_{\mathbf{j}}^{-1}(\mathbf{x''},B)=(\mathbf{x},B)$.
Thus we have
\[ \mu_{\mathbf{j}}^{-1}\mu_{\mathbf{i}} \mu_{\mathbf{j}}(\mathbf{x},B)=\mu_{\mathbf{j}}^{-1}\mu_{\mathbf{i}}(\mathbf{x''},B)=\mu_{\mathbf{j}}^{-1}(\mathbf{x''},B)=(\mathbf{x},B).\]

\end{proof}

Now we consider relations between mutation-periodic groups and strict direct cluster automorphism groups. Indeed,
For any labeled seed $(\mathbf{x},B)$ of a cluster algebra $\mathcal{A}$, $\mathrm{SAut}^+(\mathbf{A})$ is isomorphic
to the quotient group of $G(B)$ modulo the normal subgroup $H(\mathbf{x})=H(\mathbf{x},B)$. In fact, the following
theorem holds.

\begin{Theorem}\label{exa1}
Let $(\mathbf{x},B)$ be a labeled seed of a cluster algebra $\mathcal{A}$, then there is an exact sequence
\[ 1\rightarrow H(\mathbf{x})\rightarrow G(B)\xrightarrow{\varphi} \mathrm{SAut}^+(\mathcal{A})\rightarrow 1.          \]
\end{Theorem}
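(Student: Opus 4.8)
The plan is to construct $\varphi$ explicitly and then read off its image and kernel, verifying in turn that it lands in $\mathrm{SAut}^+(\mathcal{A})$, that it is a group homomorphism (this is where the opposite-group convention on $G(B)$ is essential), that it is surjective, and that $\ker\varphi=H(\mathbf{x})$. Normality of $H(\mathbf{x})$ in $G(B)$ is already Lemma \ref{norm1}, so these facts assemble into the asserted short exact sequence.

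First I would define $\varphi$. Let $\mu_{\mathbf{i}}\in G(B)$. Since $\mathbf{i}$ is a period of $B$, we have $\mu_{\mathbf{i}}(\mathbf{x},B)=(\mathbf{x}'',B)$ for some labeled cluster $\mathbf{x}''$, with the \emph{same} exchange matrix $B$. As $\mathbf{x}$ is a free generating set of $\mathcal{F}$, the assignment $x_i\mapsto x_i''$ extends uniquely to a $\mathbb{Z}$-algebra homomorphism $f\colon\mathcal{F}\to\mathcal{F}$; set $\varphi(\mu_{\mathbf{i}})=f$. The point to check is that $f\in\mathrm{SAut}^+(\mathcal{A})$. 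Because $B(f(\mathbf{x}))=B(\mathbf{x}'')=B$, applying $f$ to the exchange relation at each vertex $k$ gives $f(\mu_k(\mathbf{x}))=\mu_k(f(\mathbf{x}))$, hence $f(\mu_{\mathbf{k}}(\mathbf{x}))=\mu_{\mathbf{k}}(f(\mathbf{x}))$ for every mutation sequence $\mathbf{k}$. Thus $f$ carries the cluster at each vertex $t$ of $T_n(\mathbf{x},B)$ to the cluster at $t$ of $T_n(\mathbf{x}'',B)$; since $(\mathbf{x}'',B)$ is a seed of $\mathcal{A}$, both patterns generate $\mathcal{A}$, so $f(\mathcal{A})\subseteq\mathcal{A}$. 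Running the same argument with the inverse period $\mu_{\mathbf{i}^{-1}}\in G(B)$ produces $f^{-1}$, so $f$ is an automorphism; it is compatible with mutations and satisfies $B(f(\mathbf{x}))=B$, hence is strict direct.

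Next I would check that $\varphi$ is a homomorphism. Write $f=\varphi(\mu_{\mathbf{i}})$ and $g=\varphi(\mu_{\mathbf{j}})$, so that $f(\mathbf{x})$ and $g(\mathbf{x})$ are the cluster parts of $\mu_{\mathbf{i}}(\mathbf{x},B)$ and $\mu_{\mathbf{j}}(\mathbf{x},B)$. Using that $\mu_{\mathbf{j}}$, being a period of $B$, fixes the exchange matrix of every labeled seed whose matrix is $B$, together with the commutation of $f$ with mutations established above, one computes
\[ (\mu_{\mathbf{i}}\circ\mu_{\mathbf{j}})(\mathbf{x},B)=\mu_{\mathbf{j}}\mu_{\mathbf{i}}(\mathbf{x},B)=\mu_{\mathbf{j}}(f(\mathbf{x}),B)=\bigl(f(\mu_{\mathbf{j}}(\mathbf{x})),B\bigr)=\bigl(f(g(\mathbf{x})),B\bigr). \]
Hence $\varphi(\mu_{\mathbf{i}}\circ\mu_{\mathbf{j}})=f\circ g=\varphi(\mu_{\mathbf{i}})\circ\varphi(\mu_{\mathbf{j}})$; this is precisely the computation that forces the opposite multiplication in the definition of $G(B)$.

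Finally, surjectivity and the kernel. For surjectivity, take $f\in\mathrm{SAut}^+(\mathcal{A})$; by Definition \ref{strcluauto}, applied at the initial seed, there is a mutation sequence $\mu_{\mathbf{i}}$ with $(f(\mathbf{x}),B(f(\mathbf{x})))=\mu_{\mathbf{i}}(\mathbf{x},B)$, and strict directness gives $B(f(\mathbf{x}))=B$, so $\mu_{\mathbf{i}}(\mathbf{x},B)=(f(\mathbf{x}),B)$, whence $\mu_{\mathbf{i}}\in G(B)$ and $\varphi(\mu_{\mathbf{i}})=f$. For the kernel, $\mu_{\mathbf{i}}\in\ker\varphi$ means $f=\mathrm{id}$, i.e. $\mathbf{x}''=\mathbf{x}$, i.e. $\mu_{\mathbf{i}}(\mathbf{x},B)=(\mathbf{x},B)$; this is exactly the condition that $\mathbf{i}$ be a period of the labeled seed $(\mathbf{x},B)$, so $\ker\varphi=H(\mathbf{x})$. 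Together with Lemma \ref{norm1} this yields the exact sequence. I expect the main obstacle to be the well-definedness step, namely verifying rigorously that $x_i\mapsto x_i''$ genuinely extends to an automorphism of $\mathcal{A}$ (not merely of $\mathcal{F}$) carrying cluster variables to cluster variables; once the commutation $f\circ\mu_{\mathbf{k}}=\mu_{\mathbf{k}}\circ f$ is secured, the homomorphism, surjectivity, and kernel computations are essentially bookkeeping, the only remaining delicate point being the reduction, in the surjectivity argument, of the defining seed of a strict automorphism to the initial seed.
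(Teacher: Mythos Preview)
Your proposal is correct and follows essentially the same route as the paper: define $\varphi(\mu_{\mathbf{i}})$ to be the $\mathbb{Z}$-automorphism sending $\mathbf{x}$ to the cluster of $\mu_{\mathbf{i}}(\mathbf{x},B)$, verify via the commutation $f\circ\mu_{\mathbf{k}}=\mu_{\mathbf{k}}\circ f$ that $\varphi$ is a homomorphism (the paper's equation displaying $\varphi(\mu_{\mathbf{i}}\circ\mu_{\mathbf{j}})(\mathbf{x})=(\varphi(\mu_{\mathbf{i}})\varphi(\mu_{\mathbf{j}}))(\mathbf{x})$ is exactly your chain), then read off surjectivity and $\ker\varphi=H(\mathbf{x})$. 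If anything you are more careful than the paper about why $f$ restricts to an automorphism of $\mathcal{A}$ and about the reduction to the initial seed in the surjectivity step, both of which the paper asserts without elaboration.
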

\begin{proof}   Let us define a direct cluster automorphism $f_{\mathbf{i}}:\mathcal{A}\rightarrow \mathcal{A}$ associated to $\mu_{\mathbf{i}}=\mu_{i_s}\dots\mu_{i_2}\mu_{i_1}\in G(B)$  as follows. Suppose that $\mu_{\mathbf{i}}(\mathbf{x},B)=(\mathbf{z},B)$.
Let $f_{\mathbf{i}}: \mathcal{F} \rightarrow \mathcal{F}$  be
 the homomorphism of $\mathcal{F}$ such that $f_{\mathbf{i}}(\mathbf{x})=\mathbf{z}$ as ordered sets, where $\mathcal{F} = \mathbb{Q}(\mathbf{x}) =\mathbb{Q}(\mathbf{z}) $ is the ambient field of $\mathcal{A}$. More precisely, $f_{\mathbf{i}}$ is given by \[\frac{p(x_1,x_2,\dots,x_n)}{q(x_1,x_2,\dots,x_n)}\mapsto \frac{p(z_1,z_2,\dots,z_n)}{q(z_1,z_2,\dots,z_n)}\]
 for all polynomials $p,q\neq 0$.
  It is clear that $f_{\mathbf{i}}$ is an automorphism of $\mathcal{F}$ since every cluster is a transcendental basis of $\mathcal{F}$. We will have that $f_{\mathbf{i}}(\mu_k(\mathbf{x}))=\mu_k(f_{\mathbf{i}}(\mathbf{x}))$ for any $k\in [1,n]$. Indeed for any $k\in [1,n]$, assume that $\mu_k(\mathbf{x})=(x_1,\dots,x'_k,\dots,x_n)$  and $\mu_k(f(\mathbf{x})) = \mu_k(\mathbf{z})=(z_1,\dots,z'_k,\dots,z_n)$. Then we have
\[f_{\mathbf{i}}(x'_k) = f_{\mathbf{i}}(\frac{\prod\limits_{i=1}^n x_i^{[b_{ik}]_+}  + \prod\limits_{i=1}^n x_i^{[-b_{ik}]_+}}{x_k}) =   \frac{\prod\limits_{i=1}^n (x'_i)^{[b_{ik}]_+}  + \prod\limits_{i=1}^n (x'_i)^{[-b_{ik}]_+}}{x'_k}  =z'_k.    \]
 Thus, $f_{\mathbf{i}}$ is an automorphism of $\mathcal{F}$ satisfying that $f_{\mathbf{i}}(\mu_k(\mathbf{x}))=\mu_k(f(\mathbf{x}))$ for any $k\in [1,n]$.
 By [Lemma 3.5, \cite{CLLP}], then $f_{\mathbf{i}}$ is a cluster
automorphism of $\mathcal{A}$, and it is clear that $f_{\mathbf{i}}$ is direct and strict by definitions.

Define a map $\varphi: G(B)\rightarrow \mathrm{SAut}^+(\mathcal{A})$ by $\varphi(\mu_{\mathbf{i}})=f_{\mathbf{i}}$.
Obviously, $\varphi$ is well-defined.  For any $\mu_{\mathbf{i}}, \mu_{\mathbf{j}}\in G(B)$, we have that
\begin{equation}\label{eq1}
\varphi(\mu_{\mathbf{i}}\circ \mu_{\mathbf{j}})(\mathbf{x})=\varphi(\mu_{\mathbf{j}}\mu_{\mathbf{i}})(\mathbf{x})=
f_{\mathbf{ji}}(\mathbf{x})=\mu_{\mathbf{j}}\mu_{\mathbf{i}}(\mathbf{x})=\mu_{\mathbf{j}}(f_{\mathbf{i}}(\mathbf{x}))=
f_{\mathbf{i}}(\mu_{\mathbf{j}}(\mathbf{x}))=f_{\mathbf{i}}(f_{\mathbf{j}}(\mathbf{x}))=(\varphi(\mu_{\mathbf{i}})\varphi(\mu_{\mathbf{j}}))(\mathbf{x})
\end{equation}
 for any cluster, where the fifth equality in (\ref{eq1}) is due to the compatibility of strict direct cluster automorphisms and mutations.  Following this, $\varphi$ is a homomorphism of groups.

We show that $\varphi$ is surjective. Indeed, for any strict cluster automorphism
$f:\mathcal{A}\rightarrow \mathcal{A}$, it maps every labeled seed $\mathcal{A}$ to
a labeled seed of $\mathcal{A}$ with the same exchange matrices. For the given labeled
seed $(\mathbf{x},B)$, $f(\mathbf{x})$ is a cluster with the exchange matrix $B$, i.e.,
there exists a mutation sequence $\mu_{\mathbf{i}}=\mu_{i_s}\dots\mu_{i_2}\mu_{i_1}\in G(B)$
such that $\mu_{\mathbf{i}}(\mathbf{x},B)=(f(\mathbf{x}),B)$ as labeled seeds.
By the construction of the direct cluster automorphism $f_{\mathbf{i}}$ associated to $\mu_{\mathbf{i}}$,
we have that $f_{\mathbf{i}}(\mathbf{x}) = f(\mathbf{x})$ as ordered sets, i.e., $f_{\mathbf{i}}(x_j) = f(x_j)$ for any $j\in[1,n]$. Then we obtain that  $f=f_{\mathbf{i}}=\varphi(\mu_{\mathbf{i}})$ on $\mathcal F$ via extending the cluster automorphisms  uniquely  to an automorphism of the ambient field $\mathcal{F}$ since  $\mathcal{F}$ is generated freely by $\mathbf{x}$.

It remains to prove $\mathrm{ker}\varphi=H(\mathbf{x})$. Since a strict cluster automorphism in uniquely determined by
its value on an arbitrary labeled cluster, $\varphi(\mu_{\mathbf{i}})=id$ if and only if $\varphi(\mu_{\mathbf{i}})
(\mathbf{x})=\mathbf{x}$ as labeled clusters, which is equivalent to say  $\mu_{\mathbf{i}}(\mathbf{x})=\mathbf{x}$,
i.e., $\mathbf{i}$ is a period of $(\mathbf{x},B)$.
\end{proof}

\begin{Remark}
For a skew-symmetric cluster algebra,
King and Pressland also proved a similar result for $\mathrm{Aut}^+(\mathcal{A})$
 by also considering the action of $S_n$.

\end{Remark}

\subsection{$\mathrm{SAut}^+(\mathcal{A})$ as normal groups via permutation groups}

\begin{Lemma}\label{PL}
 Let $(\mathbf{x},B)$ be the initial labeled seed of $\mathcal{A}$.

(1) For $\sigma \in S_n$, there exists an $I$-sequence $\mathbf{i}$ as
 a $\sigma$-period of $B$ (respectively, $(\mathbf{x},B)$) if and only if there exists an  $I$-sequence $\mathbf{j}$ as
 a $\sigma$-period of $B'$ (respectively, $(\mathbf{x}',B')$) for any $B'$(respectively, $(\mathbf{x}',B')$) mutation equivalent to $B$ (respectively, $(\mathbf{x},B)$).

(2) Let $L_n$ and $P_n$ be two subsets of $S_n$ defined as follows.
\[L_n=\{\sigma\in S_n|\mbox{there exists an $I$-sequence $\mathbf{i}$ such that $\mu_\mathbf{i}(B)=B^{\sigma}$\}},\]
\[P_n=\{\tau\in  S_n|\mbox{there exists an $I$-sequence $\mathbf{j}$ such that $\mu_\mathbf{j}(\mathbf{x},B)=(\mathbf{x}^{\tau},B^{\tau})$}\}.\]
Then $L_n$ and $P_n$ are subgroups of $S_n$ and are independent with the choice of labeled seeds in the same mutation class.

(3) $P_n$ is a normal subgroup of the permutation group  $L_n$.
\end{Lemma}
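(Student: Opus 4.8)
The plan rests on one computational identity relating mutation and relabeling: for any labeled seed $(\mathbf{x},B)$, any $\sigma\in S_n$, and any $I$-sequence $\mathbf{i}=(i_1,\dots,i_s)$, writing $\sigma(\mathbf{i})=(\sigma(i_1),\dots,\sigma(i_s))$, one has
\[ \mu_{\mathbf{i}}\big((\mathbf{x},B)^{\sigma}\big)=\big(\mu_{\sigma(\mathbf{i})}(\mathbf{x},B)\big)^{\sigma}. \]
I would first prove this for a single $\mu_k$ by comparing the exchange rule for $(\mathbf{x},B)^{\sigma}$ at $k$ with that for $(\mathbf{x},B)$ at $\sigma(k)$ (a one-line index substitution in the mutation formulas), then iterate. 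Everything below is bookkeeping on this identity together with $\mu_{\mathbf{i}^{-1}}=\mu_{\mathbf{i}}^{-1}$. For (1), note that ``$\mathbf{i}$ is a $\sigma$-period of $B$'' means exactly $\mu_{\mathbf{i}}(B)=B^{\sigma^{-1}}$ (and analogously at the seed level). Given $B'=\mu_{\mathbf{k}}(B)$, the sequence $\mathbf{i}'$ with $\mu_{\mathbf{i}'}=\mu_{\sigma(\mathbf{k})}\circ\mu_{\mathbf{i}}\circ\mu_{\mathbf{k}^{-1}}$ satisfies $\mu_{\mathbf{i}'}(B')=(B')^{\sigma^{-1}}$: indeed $\mu_{\mathbf{k}^{-1}}$ returns $B'$ to $B$, $\mu_{\mathbf{i}}$ produces $B^{\sigma^{-1}}$, and the identity rewrites $(\mu_{\mathbf{k}}(B))^{\sigma^{-1}}=\mu_{\sigma(\mathbf{k})}(B^{\sigma^{-1}})$. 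Thus $\mathbf{i}'$ is a $\sigma$-period of $B'$, the converse is symmetric, and the same computation carried out for seeds gives the seed case.

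The same transport shows that the defining conditions of $L_n$ and $P_n$ are unchanged if $(\mathbf{x},B)$ is replaced by a mutation-equivalent seed, which is the independence asserted in (2). To see $L_n$ is a subgroup, note $\mathrm{id}\in L_n$; if $\mu_{\mathbf{i}}(B)=B^{\sigma}$ and $\mu_{\mathbf{i}'}(B)=B^{\sigma'}$, then the identity gives $\mu_{\sigma^{-1}(\mathbf{i}')}(B^{\sigma})=(\mu_{\mathbf{i}'}(B))^{\sigma}=B^{\sigma'\sigma}$, so the concatenation of $\mathbf{i}$ and $\sigma^{-1}(\mathbf{i}')$ realizes $\sigma'\sigma$; likewise $\mu_{\sigma(\mathbf{i}^{-1})}(B)=B^{\sigma^{-1}}$ realizes $\sigma^{-1}$. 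The identical argument at the seed level shows $P_n$ is a subgroup.

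For (3), the inclusion $P_n\subseteq L_n$ is immediate by forgetting clusters. For normality I would fix $\sigma\in L_n$ with $\mu_{\mathbf{i}}(B)=B^{\sigma}$ and $\tau\in P_n$ with $\mu_{\mathbf{j}}(\mathbf{x},B)=(\mathbf{x}^{\tau},B^{\tau})$, put $\mathbf{y}=\mu_{\mathbf{i}}(\mathbf{x})$ so that $S_1:=(\mathbf{y},B^{\sigma})=\mu_{\mathbf{i}}(\mathbf{x},B)$ is mutation-equivalent to $(\mathbf{x},B)$, and set $\mathbf{w}:=\mathbf{y}^{\sigma^{-1}}$, a free generating set of $\mathcal{F}$ whose exchange matrix is $(B^{\sigma})^{\sigma^{-1}}=B$. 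The crucial point is that $\mu_{\mathbf{j}}$, read as a composite of rational maps, is determined by the matrix history $B\to\cdots\to B^{\tau}$ alone; hence the relation $\mu_{\mathbf{j}}(\mathbf{x})=\mathbf{x}^{\tau}$, an identity of rational functions evaluated at the free generators $\mathbf{x}$, persists for any other free generating set with matrix $B$, so $\mu_{\mathbf{j}}(\mathbf{w},B)=(\mathbf{w}^{\tau},B^{\tau})$. Applying $(\ )^{\sigma}$ and the commutation identity then gives
\[ \mu_{\sigma^{-1}(\mathbf{j})}(S_1)=\big(\mu_{\mathbf{j}}(\mathbf{w},B)\big)^{\sigma}=(\mathbf{w}^{\tau\sigma},B^{\tau\sigma})=S_1^{\sigma^{-1}\tau\sigma}, \]
which exhibits $\sigma^{-1}\tau\sigma$ as an element of $P_n$ computed at the seed $S_1$. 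By the independence from (2) this group is $P_n$, so $\sigma^{-1}\tau\sigma\in P_n$; as $\sigma$ ranges over the group $L_n$ this yields $P_n\trianglelefteq L_n$.

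I expect the main obstacle to be this normality step, and within it the justification that a mutation sequence realizing the relabeling $\tau$ on one cluster realizes it on every cluster sharing the matrix $B$. This ``universality on free generators'' is exactly what lets the conjugation by the merely matrix-level permutation $\sigma$ land back inside the seed-level group $P_n$, and it has to be combined carefully with the seed-independence of $P_n$ to close the argument; the earlier parts are then routine consequences of the single commutation identity.
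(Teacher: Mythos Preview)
Your proposal is correct and follows essentially the same route as the paper. Both arguments rest on the single commutation identity $\mu_k((\mathbf{x},B)^{\sigma})=(\mu_{\sigma(k)}(\mathbf{x},B))^{\sigma}$ (which the paper records as its equation (\ref{wk})), and both use for part (3) precisely the ``universality on free generators'' step you highlight: the paper's chain for $(\sigma^{-1}\tau\sigma)(\mathbf{x},B)$ passes through $\tau(\mathbf{x}'^{\sigma^{-1}},B)$ and silently replaces it by $\mu_{\mathbf{j}}(\mathbf{x}'^{\sigma^{-1}},B)$, which is exactly your observation that $\mu_{\mathbf{j}}(\mathbf{x},B)=(\mathbf{x}^{\tau},B^{\tau})$ forces $\mu_{\mathbf{j}}(\mathbf{w},B)=(\mathbf{w}^{\tau},B^{\tau})$ for any free generating set $\mathbf{w}$. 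The only cosmetic difference is that the paper carries out one long computation directly at the initial seed $(\mathbf{x},B)$ and exhibits an explicit mutation sequence there, whereas you verify $\sigma^{-1}\tau\sigma\in P_n$ at the mutated seed $S_1=\mu_{\mathbf{i}}(\mathbf{x},B)$ and then invoke the seed-independence from (2); these are two packagings of the same calculation.
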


\begin{proof}
It is easy to verify that for any $k\in [1,n], \sigma,\tau \in S_n$, we have
\begin{equation}\label{wk}
 \mu_k(\mathbf{x}^{\sigma},B^{\sigma})=\sigma(\mu_{\sigma(k)}(\mathbf{x},B)),\,\,\, (\sigma\tau)(\mathbf{x},B)=\tau(\sigma(\mathbf{x},B)).
\end{equation}
The second equality follows from that
\[\tau(\sigma(x_1, x_2, \dots, x_n))=\tau(x_{\sigma(1)},x_{\sigma(2)},\dots, x_{\sigma(n)})
=(x_{\sigma(\tau(1))},x_{\sigma(\tau(2))},\dots, x_{\sigma(\tau(n))}).\]

For (1), if $\mu_{i_k}\dots \mu_{i_2}\mu_{i_1}(\mathbf{x},B)= (\mathbf{x},B)^{\sigma}$ and $(\mathbf{x}',B')=\mu_{p_s}\dots \mu_{p_2}\mu_{p_1}(\mathbf{x},B)$, then
by (\ref{wk}),
\[\begin{split} (\mathbf{x}',B')^{\sigma} &=(\mu_{p_s}\dots \mu_{p_2}\mu_{p_1}(\mathbf{x},B))^{\sigma}\\
&=  \mu_{\sigma^{-1}(p_s)}\dots \mu_{\sigma^{-1}(p_2)}\mu_{\sigma^{-1}(p_1)}((\mathbf{x},B)^{\sigma})\\
&=  \mu_{\sigma^{-1}(p_s)}\dots \mu_{\sigma^{-1}(p_2)}\mu_{\sigma^{-1}(p_1)}\mu_{i_k}\dots \mu_{i_2}\mu_{i_1}(\mathbf{x},B)\\
&= \mu_{\sigma^{-1}(p_s)}\dots \mu_{\sigma^{-1}(p_2)}\mu_{\sigma^{-1}(p_1)}\mu_{i_k}\dots \mu_{i_2}\mu_{i_1}\mu_{p_1}\dots\mu_{p_s}(\mathbf{x}',B').
\end{split} \]

For (2), it follows easily from (\ref{wk}) and (1).

For (3),  let $\sigma\in L_n, \tau\in P_n$,
$\mu_{\mathbf{i}}=\mu_{i_s}\dots\mu_{i_2}\mu_{i_1}$ and $\mu_{\mathbf{j}}= \mu_{j_p}\dots\mu_{j_2}\mu_{j_1}$
be two mutation sequences such that $\mu_\mathbf{i}(\mathbf{x},B)=(\mathbf{x}', B^{\sigma})$ and
$\mu_{\mathbf{j}}(\mathbf{x},B)=(\mathbf{x}^{\tau},B^{\tau})$, respectively.
Then we have
 \[\begin{split} (\sigma^{-1}\tau\sigma)(\mathbf{x},B)&=(\sigma^{-1}\tau\sigma)\mu_{i_1}\mu_{i_2}\dots\mu_{i_s}(\mathbf{x}', B^{\sigma})\\
&=\mu_{\sigma^{-1}\tau^{-1}\sigma(i_1)}\mu_{\sigma^{-1}\tau^{-1}\sigma(i_2)}\dots\mu_{\sigma^{-1}\tau^{-1}\sigma(i_s)}((\tau\sigma)(\mathbf{x}'^{\sigma^{-1}}, B))\\
&=\mu_{\sigma^{-1}\tau^{-1}\sigma(i_1)}\mu_{\sigma^{-1}\tau^{-1}\sigma(i_2)}\dots\mu_{\sigma^{-1}\tau^{-1}\sigma(i_s)}(\sigma(\tau(\mathbf{x}'^{\sigma^{-1}}, B)))\\
&=\mu_{\sigma^{-1}\tau^{-1}\sigma(i_1)}\mu_{\sigma^{-1}\tau^{-1}\sigma(i_2)}\dots\mu_{\sigma^{-1}\tau^{-1}\sigma(i_s)}
(\sigma\mu_{j_p}\dots\mu_{j_2}\mu_{j_1}(\mathbf{x}'^{\sigma^{-1}}, B))
\\&=\mu_{\sigma^{-1}\tau^{-1}\sigma(i_1)}\mu_{\sigma^{-1}\tau^{-1}\sigma(i_2)}\dots\mu_{\sigma^{-1}\tau^{-1}\sigma(i_s)}
\mu_{\sigma^{-1}(j_p)}\dots\mu_{\sigma^{-1}(j_2)}\mu_{\sigma^{-1}(j_1)}(\mathbf{x}', B^{\sigma})
\\&=\mu_{\sigma^{-1}\tau^{-1}\sigma(i_1)}\mu_{\sigma^{-1}\tau^{-1}\sigma(i_2)}\dots\mu_{\sigma^{-1}\tau^{-1}\sigma(i_s)}
\mu_{\sigma^{-1}(j_p)}\dots\mu_{\sigma^{-1}(j_2)}\mu_{\sigma^{-1}(j_1)}\mu_{i_s}\dots\mu_{i_2}\mu_{i_1}(\mathbf{x},B), \end{split}
\]
which implies that $\sigma^{-1}\tau\sigma \in P_n$.
\end{proof}

The permutation groups $L_n$ and $P_n$ are called
\textbf{permutation-periodic groups} of exchange matrices and labeled seeds respectively.

\begin{Theorem}\label{thm2} Let $\mathcal{A}=\mathcal{A}(\mathbf{x},B)$ be a cluster algebra, and let $\mathrm{SAut}^+(\mathcal{A})$ and
$\mathrm{Aut}^+(\mathcal{A})$ be its strict direct cluster automorphism group and direct cluster automorphism group, respectively.
Suppose that $L_n$ and $P_n$ are defined as above. Then

(i) $\mathrm{SAut}^+(\mathcal{A})$ is a normal subgroup of $\mathrm{Aut}^+(\mathcal{A})$;

(ii) there is an exact sequence of groups
 \[ 1\rightarrow \mathrm{SAut}^+(\mathcal{A}) \rightarrow \mathrm{Aut}^+(\mathcal{A}) \xrightarrow{\phi} L_n/P_n \rightarrow 1.\]
\end{Theorem}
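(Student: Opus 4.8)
The plan is to realize the sequence in (ii) as the one induced by a single surjective homomorphism $\phi:\mathrm{Aut}^+(\mathcal{A})\to L_n/P_n$ whose kernel is exactly $\mathrm{SAut}^+(\mathcal{A})$; then (i) is automatic, since the kernel of a group homomorphism is normal, and (ii) is the associated short exact sequence. To build $\phi$ I would start from Remark \ref{rm2}: every $f\in\mathrm{Aut}^+(\mathcal{A})$ can be written as $f(\mathbf{x})=\sigma(\mu_{\mathbf{i}}(\mathbf{x}))$ with $B(\mu_{\mathbf{i}}(\mathbf{x}))=B^{\sigma^{-1}}$, equivalently $(f(\mathbf{x}),B)=\sigma(\mu_{\mathbf{i}}(\mathbf{x},B))$. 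Since $\mu_{\mathbf{i}}(B)=B^{\sigma^{-1}}$, the definition of $L_n$ gives $\sigma^{-1}\in L_n$, hence $\sigma\in L_n$ by Lemma \ref{PL}(2). I then set $\phi(f)=\sigma P_n$.

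Before analysing $\phi$ I would record the basic commutation property of a direct cluster automorphism: for every mutation sequence $\mathbf{j}$ one has $f(\mu_{\mathbf{j}}(\mathbf{x}))=\mu_{\mathbf{j}}(f(\mathbf{x}))$, and $B(f(\mathbf{x}'))=B'$ as labeled seeds for every $(\mathbf{x}',B')$ reachable from $(\mathbf{x},B)$ by mutations. This is proved by induction on the length of $\mathbf{j}$: applying the $\mathbb{Z}$-algebra homomorphism $f$ to the exchange relation at an index $k$ and using $B(f(\mathbf{x}))=B$ yields $f(\mu_k(\mathbf{x}))=\mu_k(f(\mathbf{x}))$ with exchange matrix $\mu_k(B)$, which sustains the induction. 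This identity is what lets permutations be pushed past mutation sequences consistently.

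The three facts to check about $\phi$ are well-definedness, the homomorphism property, and the computation of the kernel, all of which rest on the two identities in (\ref{wk}), namely $\mu_k(\mathbf{x}^{\sigma},B^{\sigma})=\sigma(\mu_{\sigma(k)}(\mathbf{x},B))$ and $(\sigma\tau)(\mathbf{x},B)=\tau(\sigma(\mathbf{x},B))$, which together also give $\sigma\mu_{\mathbf{i}}=\mu_{\sigma^{-1}(\mathbf{i})}\sigma$. For well-definedness, if $\sigma(\mu_{\mathbf{i}}(\mathbf{x},B))=\sigma'(\mu_{\mathbf{i}'}(\mathbf{x},B))$ are two representations of $f(\mathbf{x})$, I would rearrange using (\ref{wk}) to reach an identity of the form $\mu_{\sigma(\mathbf{p})}(\mathbf{x},B)=(\mathbf{x}^{\sigma'\sigma^{-1}},B^{\sigma'\sigma^{-1}})$, which by the definition of $P_n$ says $\sigma'\sigma^{-1}\in P_n$; since $P_n\trianglelefteq L_n$ (Lemma \ref{PL}(3)), this gives $\sigma P_n=\sigma' P_n$. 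For the homomorphism property, using $f(\mu_{\mathbf{j}}(\mathbf{x}))=\mu_{\mathbf{j}}(f(\mathbf{x}))$ together with (\ref{wk}) I would compute $(fg)(\mathbf{x})=(\sigma_f\sigma_g)\big(\mu_{\sigma_f(\mathbf{j})}\mu_{\mathbf{i}}(\mathbf{x})\big)$ with exchange matrix $B^{(\sigma_f\sigma_g)^{-1}}$, so that $\phi(fg)=\sigma_f\sigma_g P_n=\phi(f)\phi(g)$.

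For the kernel, $f\in\mathrm{SAut}^+(\mathcal{A})$ means, by Definition \ref{strcluauto}, that $(f(\mathbf{x}),B)=\mu_{\mathbf{k}}(\mathbf{x},B)$ for a genuine mutation sequence, which is the representation with $\sigma=\mathrm{id}$, so $\phi(f)=P_n$. Conversely, if $\phi(f)=P_n$ then $\sigma\in P_n$, so $\sigma(\mathbf{x},B)=\mu_{\mathbf{j}}(\mathbf{x},B)$ for some $\mathbf{j}$, and $(f(\mathbf{x}),B)=\sigma\mu_{\mathbf{i}}(\mathbf{x},B)=\mu_{\sigma^{-1}(\mathbf{i})}\mu_{\mathbf{j}}(\mathbf{x},B)$ is again a pure mutation sequence, whence $f\in\mathrm{SAut}^+(\mathcal{A})$; thus $\ker\phi=\mathrm{SAut}^+(\mathcal{A})$. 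Surjectivity is direct: given $\sigma\in L_n$ with $\mu_{\mathbf{i}}(B)=B^{\sigma}$, the $\mathbb{Z}$-algebra automorphism determined by $f(\mathbf{x})=\sigma^{-1}(\mu_{\mathbf{i}}(\mathbf{x}))$ satisfies $B(f(\mathbf{x}))=(B^{\sigma})^{\sigma^{-1}}=B$, so $f\in\mathrm{Aut}^+(\mathcal{A})$ by the characterization (iv) of cluster automorphisms, and $\phi(f)=\sigma^{-1}P_n$ ranges over all of $L_n/P_n$. I expect the main obstacle to be purely bookkeeping: because the $S_n$-action in (\ref{wk}) is anti-multiplicative, every step that slides a permutation across a mutation sequence must track the correct index relabeling and the correct side of the coset, and it is precisely the normality $P_n\trianglelefteq L_n$ that reconciles the left/right coset ambiguities arising in the well-definedness argument.
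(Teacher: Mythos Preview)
Your proposal is correct and follows essentially the same route as the paper: define $\phi(f)=\sigma P_n$ from a presentation $f(\mathbf{x})=\sigma(\mu_{\mathbf{i}}(\mathbf{x}))$ with $\mu_{\mathbf{i}}(B)=B^{\sigma^{-1}}$, then verify well-definedness, the homomorphism property, surjectivity, and $\ker\phi=\mathrm{SAut}^+(\mathcal{A})$ using the identities in (\ref{wk}) together with the compatibility $f(\mu_{\mathbf{j}}(\mathbf{x}))=\mu_{\mathbf{j}}(f(\mathbf{x}))$. The only organizational difference is that you obtain (i) as an immediate corollary of $\ker\phi=\mathrm{SAut}^+(\mathcal{A})$, whereas the paper first proves (i) by a direct conjugation computation and then establishes (ii); your packaging is slightly more economical but the substance is identical.
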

\begin{proof}
(i)
Assume $f\in S\mathrm{Aut}^+(\mathcal{A}), g\in \mathrm{Aut}^+(\mathcal{A})$, we prove that $gfg^{-1}\in \mathrm{SAut}^+(\mathcal{A})$.

Indeed, let $x'=g(x)$ and $\mu_{\mathbf{i}}(\mathbf{x}')=f(\mathbf{x}')$. Then
\[gfg^{-1}(x)=gf(x')=g(\mu_{\mathbf{i}}(\mathbf{x}')) = \mu_{\mathbf{i}}g(\mathbf{x}') = \mu_{\mathbf{i}}(\mathbf{x}),\]
and so $\mathrm{SAut}^+(A)$ is a normal subgroup as claimed.

(ii) As noted in Remark \ref{rm2}, for a direct cluster automorphism $f:\mathcal{A}\rightarrow \mathcal{A}$, there exists a mutation
sequence $\mu_{\mathbf{i}}$ and a permutation $\sigma\in S_n$ such that $f(\mathbf{x})=\sigma(\mu_{\mathbf{i}}(\mathbf{x}))$
and $B(\mu_{\mathbf{i}}(\mathbf{x}))=B^{\sigma^{-1}}$. Then we define a map $\phi$ mapping $f$ to $\sigma P_n=P_n\sigma$.
Assume there exists another sequence $\mathbf{j}=(j_1,j_2,\dots,j_p)$ and a permutation $\eta\in S_n$ such that
$f(\mathbf{x})=\eta(\mu_{\mathbf{j}}(\mathbf{x}))$
and $B(\mu_{\mathbf{j}}(\mathbf{x}))=\mu_{\mathbf{i}}(B)=B^{\eta^{-1}}$. Then we have
\[\sigma(\mu_{\mathbf{i}}(\mathbf{x}))=f(\mathbf{x})=\eta(\mu_{\mathbf{j}}(\mathbf{x})),\]
and thus
\[\mu_{\eta\sigma^{-1}(i_1)}\dots\mu_{\eta\sigma^{-1}(i_s)}\mu_{j_p}\dots\mu_{j_1}(\mathbf{x},B)
=(\sigma\eta^{-1})(\mathbf{x},B).\]
Thus $\sigma\eta^{-1}\in P_n$ and $\sigma P_n= P_n\sigma=P_n\eta=\eta P_n$.

Let $f$ and $g$ be two direct cluster automorphisms, then there exist two sequences $\mathbf{i}=(i_1,i_2,\dots,i_s)$,
$\mathbf{j}=(j_1,j_2,\dots, j_p)$ of and two permutations $\sigma,\tau\in S_n$ such that
\[ \sigma(\mu_{\mathbf{i}}(\mathbf{x}))=f(\mathbf{x}),\, B(\mu_{\mathbf{i}}(\mathbf{x}))=B^{\sigma^{-1}};\,\,
\tau(\mu_{\mathbf{j}}(\mathbf{x}))=g(\mathbf{x}),\, B(\mu_{\mathbf{j}}(\mathbf{x}))=B^{\tau^{-1}}.\]
Then we have \[ gf(\mathbf{x})=g(\sigma\mu_{\mathbf{i}}(\mathbf{x}))=\sigma\mu_{\mathbf{i}}g(\mathbf{x})=
\sigma\mu_{\mathbf{i}}\tau\mu_{\mathbf{j}}(\mathbf{x})=\sigma(\tau\mu_{\mathbf{\tau(i)}}\mu_{\mathbf{j}}(\mathbf{x}))
=(\tau\sigma)\mu_{\mathbf{\tau(i)}}\mu_{\mathbf{j}}(\mathbf{x}),\]
and
\[\mu_{\mathbf{\tau(i)}}\mu_{\mathbf{j}}(B)=\mu_{\mathbf{\tau(i)}}(\tau^{-1}(B))=\tau^{-1}(\mu_{\mathbf{i}}(B))
=\tau^{-1}(\sigma^{-1}(B))=(\sigma^{-1}\tau^{-1})(B)=B^{(\tau\sigma)^{-1}}.\]
Thus we have \[\phi(gf)=(\tau\sigma)P_n=(\tau P_n)(\sigma P_n)=\phi(g)\phi(f).\]

Hence, $\phi$ is well-defined as a homomorphism of groups.
 It is surjective, since for any $\sigma\in L_n$, we have $\sigma^{-1}\in L_n$
and hence there is a sequence  $\mathbf{i}$ such that $B(\mu_{\mathbf{i}}(\mathbf{x}))=B^{\sigma^{-1}}$.
Let us define a cluster automorphism $f: \mathcal{A}\rightarrow \mathcal{A}$ as follows.
For the labeled seed $(\mathbf{x}, B)$, assume that $\mu_{\mathbf{i}}(\mathbf{x}, B) = (\mathbf{z}, B^{\sigma^{-1}})$ and consider the seed $(\mathbf{z}^{\sigma}, B)$ of $\mathcal{A}$. Let $f: \mathcal{F} \rightarrow \mathcal{F}$  be
 the homomorphism of $\mathcal{F}$ such that $f(\mathbf{x})=\mathbf{z}^{\sigma}$ as ordered sets, i.e., $f(x_j)=z_{\sigma(j)}$ for each $j\in [1,n]$.
It is clear that $f$ is an automorphism of $\mathcal{F}$ and it is easy to check that $f(\mu_{x_k}(\mathbf{x})) = \mu_{f(x_k)}(\mathbf{z}^{\sigma})$ for every
$k\in [1,n]$. By [Lemma 3.5, \cite{CLLP}], we know that $f$ is a direct cluster automorphism such that $f(\mathbf{x})=\mathbf{z}^{\sigma}$.
By the construction of the map  $\phi$, we know that $\phi(f)=\sigma P_n$ and thus $\phi$ is surjective.  It remains to prove that $\mathrm{ker}(\phi)=\mathrm{SAut}^+(\mathcal{A})$. It is clear that $\mathrm{ker}(\phi)\supset \mathrm{SAut}^+(\mathcal{A})$.
Conversely, for $f\in \mathrm{ker}\phi$, there are $\sigma$ and $\mu_{\mathbf{i}}$ such that $\sigma(\mu_{\mathbf{i}}(\mathbf{x}))=f(\mathbf{x}),\, B(\mu_{\mathbf{i}}(\mathbf{x}))=B^{\sigma^{-1}}$. If $\sigma\in P_n$, then
there is a sequence $\mathbf{j}$ such that $\mu_{\mathbf{j}}(\mathbf{x},B)=(\mathbf{x}^{\sigma},B^{\sigma})$. Then we have
\[f(\mathbf{x})=\sigma(\mu_{\mathbf{i}}(\mathbf{x}))=\mu_{\sigma^{-1}(\mathbf{i})}\sigma(\mathbf{x})=\mu_{\sigma^{-1}(\mathbf{i})}\mu_{\mathbf{j}}(\mathbf{x}).\]
Thus similar to the proof in Theorem \ref{exa1}, we can prove that $f=f_{\mathbf{j},\sigma^{-1}(\mathbf{i})}\in \mathrm{SAut}^+(\mathcal{A})$.
\end{proof}

\begin{Corollary}\label{finitegp}
Let $\mathcal{A}=\mathcal{A}(\mathbf{x},B)$ be a cluster algebra. Then $\mathrm{SAut}^+(\mathcal{A})$ is a finite group if and only if
$\mathrm{Aut}(\mathcal{A})$ is a finite group.
\end{Corollary}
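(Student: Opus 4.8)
The plan is to assemble the two short exact sequences together with Proposition \ref{cor1}, so essentially no new computation is required beyond one finiteness observation. The crucial remark is that the quotient $L_n/P_n$ appearing in Theorem \ref{thm2}(ii) is automatically a \emph{finite} group: by Lemma \ref{PL}, $L_n$ is a subgroup of the symmetric group $S_n$ and $P_n$ is a normal subgroup of it, so $|L_n/P_n|$ divides $n!$ and in particular is finite regardless of the type of $\mathcal{A}$. This is the only genuinely structural input; everything else is bookkeeping with extensions.

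First I would invoke the exact sequence
\[ 1\rightarrow \mathrm{SAut}^+(\mathcal{A}) \rightarrow \mathrm{Aut}^+(\mathcal{A}) \xrightarrow{\phi} L_n/P_n \rightarrow 1 \]
from Theorem \ref{thm2}(ii), which exhibits $\mathrm{Aut}^+(\mathcal{A})$ as a group extension of the finite group $L_n/P_n$ by the normal subgroup $\mathrm{SAut}^+(\mathcal{A})$. From this I read off both implications at the level of $\mathrm{Aut}^+$. If $\mathrm{SAut}^+(\mathcal{A})$ is finite, then $\mathrm{Aut}^+(\mathcal{A})$ is finite, since an extension of a finite group by a finite group is finite (one has the index bound $[\mathrm{Aut}^+(\mathcal{A}):\mathrm{SAut}^+(\mathcal{A})]=|L_n/P_n|<\infty$ because $\phi$ is surjective). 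Conversely, if $\mathrm{Aut}^+(\mathcal{A})$ is finite, then its subgroup $\mathrm{SAut}^+(\mathcal{A})$ is finite as well. Thus finiteness of $\mathrm{SAut}^+(\mathcal{A})$ is equivalent to finiteness of $\mathrm{Aut}^+(\mathcal{A})$.

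Finally I would bridge $\mathrm{Aut}^+(\mathcal{A})$ and the full automorphism group $\mathrm{Aut}(\mathcal{A})$ using Proposition \ref{cor1}, which states that $\mathrm{Aut}^+(\mathcal{A})$ is a normal subgroup of $\mathrm{Aut}(\mathcal{A})$ of index at most two and hence that $\mathrm{Aut}^+(\mathcal{A})$ is finite if and only if $\mathrm{Aut}(\mathcal{A})$ is finite. Chaining the two equivalences gives
\[ \mathrm{SAut}^+(\mathcal{A}) \text{ finite} \iff \mathrm{Aut}^+(\mathcal{A}) \text{ finite} \iff \mathrm{Aut}(\mathcal{A}) \text{ finite}, \]
which is exactly the assertion. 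There is no real obstacle here; the only point that requires a moment's attention is justifying that $L_n/P_n$ is finite, and this is immediate from $L_n\le S_n$. I would present the argument compactly, citing Theorem \ref{thm2}, Lemma \ref{PL}, and Proposition \ref{cor1}, rather than re-deriving the extension theory.
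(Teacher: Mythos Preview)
Your proof is correct and follows essentially the same route as the paper's own argument: invoke the exact sequence of Theorem~\ref{thm2} (together with the finiteness of $L_n/P_n\le S_n/P_n$) to pass between $\mathrm{SAut}^+(\mathcal{A})$ and $\mathrm{Aut}^+(\mathcal{A})$, then use Proposition~\ref{cor1} to pass to $\mathrm{Aut}(\mathcal{A})$. You have simply made explicit the reason $L_n/P_n$ is finite, which the paper leaves implicit.
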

\begin{proof}
It follows from Theorem \ref{thm2} that $\mathrm{SAut}^+(\mathcal{A})$ is a finite group if and only if
$\mathrm{Aut}^+(\mathcal{A})$ is a finite group. The latter one is finite if and only if $\mathrm{Aut}(\mathcal{A})$ is finite
by Corollary \ref{cor1}.
\end{proof}

From Theorem \ref{thm2}, we know that $\mathrm{SAut}^+(\mathcal{A})=\mathrm{Aut}^+(\mathcal{A})$ is equivalent to $P_n=L_n$.
In general, a strict direct cluster automorphism group $\mathrm{SAut}^+(\mathcal{A})$ is a proper subgroup of $\mathrm{Aut}^+(\mathcal{A})$.
For example, the exchange matrix $B$ is the matrix of a Kronecker quiver, then $L_2 = S_2$, however $P_2 = 1$.
In the rest of this section, we give a sufficient condition such that $\mathrm{SAut}^+(\mathcal{A})=\mathrm{Aut}^+(\mathcal{A})$ holds. Actually,
we obtain a number of exchange matrices such that any permutation of the initial labeled seed is mutation equivalent to it as
labeled seeds.

For a skew-symmetric matrix $B=(b_{ij})_{n\times n}$, let $v(B)=\mathrm{max}\{b_{ij}|i,j\in [1,n]\}$ be the maximal entry of $B$,
and let $m(B)= \mathrm{inf}\{v(B')|B'\, \mbox{is mutation equivalent to}\, B \}$ be the infimum of all maximal elements of exchange matrices
occurring in the mutation class of $B$.
\begin{Lemma}[\cite{D}]\label{conn} For any connected quiver $Q$, its vertices can always be enumerated, say as $q_1, q_2,\dots,q_n$, so that the full subquiver $Q[q_1, q_2,\dots,q_i]$ of $Q$ determined
by $\{q_1, q_2,\dots,q_i\}$ is connected for every $i\in [1,n]$.
\end{Lemma}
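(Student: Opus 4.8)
The plan is to produce the required enumeration from a search ordering (breadth- or depth-first) induced by a spanning tree of the underlying graph of $Q$. The point to keep in mind throughout is that connectivity of a quiver, and of any of its full subquivers, depends only on the underlying undirected graph: arrow orientations and multiplicities play no role. So I would work entirely with that graph.

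First I would fix a spanning tree $T$ of $Q$, which exists precisely because $Q$ is connected, and choose any vertex as the root $q_1$. I would then order the remaining vertices so that every non-root vertex appears strictly after its parent in $T$; such an ordering exists, for instance the pre-order of a depth-first traversal of $T$, or the order in which vertices are first discovered by breadth-first search from $q_1$. Relabel the vertices $q_1, q_2, \dots, q_n$ according to this ordering.

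The key step is to check that the full subquiver $Q[q_1, \dots, q_i]$ is connected for every $i \in [1,n]$, which I would do by induction on $i$. The base case $i=1$ is a single vertex. For the inductive step, each vertex $q_j$ with $2 \le j \le i$ has its parent $p(q_j)$ appearing earlier in the ordering, so $p(q_j) \in \{q_1, \dots, q_i\}$ and the tree edge joining $q_j$ to $p(q_j)$ lies inside $Q[q_1, \dots, q_i]$. Following parents upward to the root therefore gives a path of tree edges from $q_j$ to $q_1$ that stays inside the prefix, so every vertex of the prefix is connected to $q_1$ and the prefix is connected.

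I do not expect a genuine obstacle: the only points requiring care are the two conventions noted above (connectivity read off the underlying graph, and ``full'' meaning induced), and the guarantee that the chosen traversal really places each parent before its children. As a backup phrasing I would keep the equivalent downward induction on $n$: a finite connected graph with $n \ge 2$ vertices always has a non-cut vertex, for example a leaf of any spanning tree or an endpoint of a longest path; taking such a vertex as $q_n$, the remainder is still connected, so the induction hypothesis enumerates it as $q_1, \dots, q_{n-1}$ with all prefixes connected, and appending $q_n$ (with $Q[q_1,\dots,q_n]=Q$ connected) completes the argument.
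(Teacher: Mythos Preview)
Your argument is correct; both the spanning-tree traversal ordering and the non-cut-vertex downward induction produce the required enumeration, and you have correctly identified that only the underlying undirected graph matters.

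Note, however, that the paper does not supply its own proof of this lemma: it is simply cited from Diestel's \emph{Graph Theory}, so there is no in-paper argument to compare against. Your write-up is essentially the standard textbook proof (Diestel's version is the downward induction via a non-cut vertex, which you give as your backup phrasing), so in that sense your approach coincides with the cited source.
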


\begin{Proposition}\label{m1}
If $B$ is a skew-symmetric matrix with $m(B)=1$, then $P_n=S_n$ holds, that is,
 for any $\sigma\in S_n$, there exists an $I$-sequence $\mathbf{j}$ such that
 $(\mathbf{x},B)^{\sigma}=\mu_{\mathbf{j}}(\mathbf{x},B)$.
 Moreover, $\mathrm{SAut}^+(\mathcal{A})=\mathrm{Aut}^+(\mathcal{A})$ holds.
\end{Proposition}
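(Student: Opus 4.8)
The plan is to reduce to a favourable representative of the mutation class and then assemble all of $S_n$ out of transpositions coming from simple edges. First I would invoke Lemma \ref{PL}(2), which says that $P_n$ depends only on the mutation class and not on the chosen labeled seed. Since $B$ is indecomposable (hence nonzero) and skew-symmetric with integer entries, every $B'$ mutation equivalent to $B$ has $v(B')\geqslant 1$; as $v$ takes positive integer values, the hypothesis $m(B)=1$ forces the infimum to be attained, so there is a matrix $B'$ mutation equivalent to $B$ with $v(B')=1$. I would then compute $P_n$ using a labeled seed $(\mathbf{x}',B')$, where every off-diagonal entry of $B'$ lies in $\{-1,0,1\}$, i.e. every edge of the quiver $Q'$ of $B'$ is simple.

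Second, I would produce one transposition in $P_n$ for each edge of $Q'$. Because indecomposability is preserved under mutation, $B'$ is again indecomposable, so the underlying undirected graph $\Gamma$ of $Q'$ is connected. For any edge $\{i,j\}$ of $\Gamma$, the full subseed supported on $\{i,j\}$ is of type $A_2$, and the $A_2$-period $\mu_i\mu_j\mu_i\mu_j\mu_i$ is an $(ij)$-period of that subseed; by the Extension Theorem (Theorem \ref{RE}(ii), exactly as recorded in Example \ref{rank2}(2)) it lifts to an $(ij)$-period of the full labeled seed $(\mathbf{x}',B')$. Hence the transposition $(ij)$ lies in $P_n$ for every edge $\{i,j\}$ of $\Gamma$.

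Third, I would finish with a purely group-theoretic observation: the transpositions attached to the edges of a connected graph on $[1,n]$ generate $S_n$ (a spanning tree of $\Gamma$ already furnishes $n-1$ such transpositions, and the transpositions along a tree generate the whole symmetric group). Since $P_n$ is a subgroup of $S_n$ by Lemma \ref{PL}(2) and contains all these edge-transpositions, it must equal $S_n$. For the last assertion, I would observe that $P_n\subseteq L_n\subseteq S_n$, so $P_n=S_n$ forces $L_n=S_n$ and $L_n/P_n=1$; plugging this into the exact sequence of Theorem \ref{thm2}(ii) makes $\phi$ trivial and yields $\mathrm{SAut}^+(\mathcal{A})=\mathrm{Aut}^+(\mathcal{A})$.

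The only genuinely delicate points are the reduction in the first step — one must be careful that $P_n$ is an invariant of the mutation class rather than being tied to the initial labeling, which is precisely Lemma \ref{PL}(2) — and the verification that the $A_2$-period really acts as the transposition after extension, which is handled by Theorem \ref{RE}(ii). Once these are in place, the argument is driven entirely by the standard fact that edge-transpositions of a connected graph generate $S_n$, so I do not anticipate a serious obstacle beyond bookkeeping.
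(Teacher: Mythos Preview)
Your argument is correct, and its core ingredients---reducing to a seed with $v=1$, realizing each simple edge as a transposition via the $A_2$ period and the Extension Theorem, and exploiting connectivity---are exactly the same ones the paper uses. The execution, however, is different. The paper gives an explicit constructive procedure: it invokes a graph-theoretic lemma (Lemma~\ref{conn}) allowing the vertices to be peeled off one at a time while keeping the remaining quiver connected, and then, for a fixed $\sigma$, moves each cluster variable to its target position by composing the edge-transposition mutations along a path in the surviving connected subquiver. Your version bypasses this iterative construction by observing once that $P_n$ is a subgroup (Lemma~\ref{PL}(2)) and that the transpositions attached to the edges of a connected graph already generate $S_n$. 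Your route is shorter and avoids the auxiliary Lemma~\ref{conn}; the paper's route has the advantage of producing an explicit mutation sequence for each $\sigma$, as illustrated in the worked example following the proposition.
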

\begin{proof} By Lemma \ref{PL}(1), without loss of generality, we may assume that $v(B)=1$. Since $B$ is indecomposable,
 the corresponding quiver $Q$ is connected. For any $\sigma\in S_n$, we want to find an $I$-sequence $\mathbf{j}$ such that
 $(\mathbf{x},B)^{\sigma}=\mu_{\mathbf{j}}(\mathbf{x},B)$  in at most $n-1$ steps.

The idea of the   following proof is that if two vertices $i$ and $j$ of a quiver $Q$ are connected by only one edge, by Theorem \ref{RE} and Example \ref{rank2}, the sequence $(i,j,i,j,i)$ is a $\sigma=(ij)$-period of $(\mathbf{x},Q)$. By applying mutation sequence $\mu_i\mu_j\mu_i\mu_j\mu_i$ to the seed $(\mathbf{x},Q)$, the underlying graph of the new quiver does not changed, however we permute cluster variables $x_i$ and $x_j$. We will repeat this operation to obtain the desired result.

  {\bf Step 1}. By Lemma \ref{conn}, we can take $i_n\in Q_0$ such that the full subquiver $Q^{(1)}$ of $Q$ determined by the subset of vertices
  $Q_0\backslash \{v_n\}$ is connected, where $v_n$ is the vertex of $Q$ corresponding to $x_n$. In this case, $v_n=i_n$.
  If $\sigma^{-1}(i_n)=v_n$, then $x_{i_n}$ lies in the $\sigma^{-1}(i_n)$-th position and proceed the next step.
   Otherwise $\sigma^{-1}(i_n)\neq v_n$, since $Q$ is connect, there is a sequence  $\{w_0,w_1,\dots,w_m\}$ such that
  $w_0=\sigma^{-1}(i_n), w_m=v_n$, and $(w_k, w_{k+1})$ is an edge in $Q$ for $0\leq k<m$, i.e., $|b_{w_kw_{k+1}}|=1$.
  By applying the following mutation sequence
  \[\mu^{(n)}=(\mu_{w_0}\mu_{w_m}\mu_{w_0}\mu_{w_m}\mu_{w_0})\dots (\mu_{w_0}\mu_{w_2}\mu_{w_0}\mu_{w_2}\mu_{w_0})(\mu_{w_0}\mu_{w_1}\mu_{w_0}\mu_{w_1}\mu_{w_0})\]
  to the initial labeled seed $(\mathbf{x},Q)$, we get a new labeled seed $(\mathbf{x}',Q')$ such that the cluster variable
  $x_{i_n}$ lies in the $w_0$-th position, i.e., the $\sigma^{-1}(i_n)$-th position, in $\mathbf{x}'$.

  Note that $Q$ and $Q'$ share the same underlying graph and the full subquiver $\hat{Q}'$ of $Q'$ determined by
  $Q'_0\backslash \{\sigma^{-1}(i_n),\,\,\text{i.e., the vertex corresponding to}\,\, x_{i_n}\,\, \text{in}\,\, Q' \}$ is connected
  since $\hat{Q}'$ and $Q^{(1)}$ also have the same underlying graph and $Q^{(1)}$ is connected.

 {\bf Step 2}. By Lemma \ref{conn} and $\hat{Q}'$ is connneted, we can take $i_{n-1}\in[1,n]\backslash \{i_n\}$ such that the full subquiver $Q^{(2)}$ of $Q'$ determined by
 $$Q'_0\backslash \{\text{the vertices corresponding to}\,\, x_{i_{n-1}}\,\, \text{and}\,\, x_{i_n}\,\, \text{in}\,\, Q' \}$$ is connected.
 Assume that the vertex corresponding to $x_{i_{n-1}}$ in $Q'$ is $v_{n-1}$.
 If $\sigma^{-1}(i_{n-1})=v_{n-1}$, then $x_{i_{n-1}}$ is in the
 $\sigma^{-1}(i_{n-1})$-th position and proceed the next step. Otherwise, note that $v_{n-1}, \sigma^{-1}(i_{n-1})\in \hat{Q}'_0$
 and $\hat{Q}'$ is connected, we may take a sequence $\{s_0,s_1,\dots,s_l\}$ of vertices in $\hat{Q}'_0$ such that
 $s_0=\sigma^{-1}(i_{n-1}), s_l=v_{n-1}$, and $(s_k, s_{k+1})$ is an edge in $\hat{Q}'$ for $0\leq k<l$. Apply the following
 mutation sequence
\[\mu^{(n-1)}=(\mu_{s_0}\mu_{s_l}\mu_{s_0}\mu_{s_l}\mu_{s_0})\dots (\mu_{s_0}\mu_{s_2}\mu_{s_0}\mu_{s_2}\mu_{s_0})(\mu_{s_0}\mu_{s_1}\mu_{s_0}\mu_{s_1}\mu_{s_0})\]
 to the labeled seed $(\mathbf{x'},Q')$, then we get a labeled seed $(\mathbf{x''},Q'')$ in which $x_{i_{n-1}}$ is in the
 $\sigma^{-1}(i_{n-1})$-th position and $x_{i_n}$ is in the $\sigma^{-1}(i_n)$-th position in $\mathbf{x}''$.

 Note that $Q''$ and $Q'$ share the same underlying graph and the full subquiver $\hat{Q}''$ of $Q''$ determined by
  $Q''_0\backslash \{\sigma^{-1}(i_{n-1}),\sigma^{-1}(i_n),\text{i.e., the vertices corresponding to}\,\,  x_{i_{n-1}}\,\, \text{and}\,\, x_{i_n} \,\, \text{in}\,\, Q''\}$ is also connected since $\hat{Q}''$ and $Q^{(2)}$ have the same underlying graph and $Q^{(2)}$ is connected.

 Repeat the above steps more at most $n-3$ times, we will obtain a labeled seed such that for all $j\in [1,n]$, $x_j$ lies in the $\sigma^{-1}(j)$-th
 position, which is actually the labeled seed $(\mathbf{x},B)^{\sigma}$.
\end{proof}

\begin{Example}
Let $(\mathbf{x},Q)$ be a labeled seed where $\mathbf{x}=(x_1,x_2,x_3,x_4)$ and $Q$ is given as follows.
\[\begin{tikzpicture}[scale=1.3]
\node at (0,0) (11) {$1$};
\node at (1,0) (21) {$2$};
\node at (2,0) (31) {$3$};
\node at (3,0) (41) {$4$};
\path[-angle 90]
	(11) edge (21)
	(21) edge (31)
	(31) edge (41);
\end{tikzpicture}\]
Assume that $\sigma=(14)(23)\in S_4$. We construct a mutation sequence $\mu_{\mathbf{j}}$ such that $\mu_{\mathbf{j}}(\mathbf{x},Q)=(\mathbf{x},Q)^{\sigma}$
using the method in the proof of Proposition \ref{m1}.

 Step 1. Take $i_4=4$, then $v_4=4$ and $\sigma^{-1}(i_4)=1$. Let $w_0=1, w_1=2, w_2=3$ and $w_3=4$, thus \[\mu^{(4)}= \mu_{w_0}\mu_{w_3}\mu_{w_0}\mu_{w_3}\mu_{w_0} \mu_{w_0}\mu_{w_2}\mu_{w_0}\mu_{w_2}\mu_{w_0}\mu_{w_0}\mu_{w_1}\mu_{w_0}\mu_{w_1}\mu_{w_0}=\mu_1\mu_4\mu_1\mu_4\mu_1\mu_1\mu_3\mu_1\mu_3\mu_1\mu_1\mu_2\mu_1\mu_2\mu_1.\]
 Let $(\mathbf{x}',Q')=\mu^{(4)}(\mathbf{x},Q)$, then $x_{i_4}$ is in the $\sigma^{-1}(i_4)$-th position in $\mathbf{x}'$.
 Indeed we have that $\mathbf{x}'=(x_4,x_1,x_2,x_3)$ and $Q'$ is given as follows.
 \[\begin{tikzpicture}[scale=1.3]
\node at (0,0) (11) {$2$};
\node at (1,0) (21) {$3$};
\node at (2,0) (31) {$4$};
\node at (3,0) (41) {$1$};
\path[-angle 90]
	(11) edge (21)
	(21) edge (31)
	(31) edge (41);
\end{tikzpicture}\]

 Step 2. Take $i_3=3$, then $v_3=4$ and $\sigma^{-1}(i_3)=2$. Let $s_0=2, s_1=3, s_2=4$,  thus \[\mu^{(3)}= (\mu_{s_0}\mu_{s_2}\mu_{s_0}\mu_{s_2}\mu_{s_0})(\mu_{s_0}\mu_{s_1}\mu_{s_0}\mu_{s_1}\mu_{s_0})=(\mu_2\mu_4\mu_2\mu_4\mu_2)(\mu_2\mu_3\mu_2\mu_3\mu_2).\]
 Let $(\mathbf{x}'',Q'')=\mu^{(3)}(\mathbf{x}',Q')$, then $x_{i_3}$ is in the $\sigma^{-1}(i_3)$-th position and $x_{i_4}$ is in the $\sigma^{-1}(i_4)$-th position in $\mathbf{x}''$.
 Indeed we have that $\mathbf{x}''=(x_4,x_3,x_1,x_2)$ and $Q''$ is given as follows.
 \[\begin{tikzpicture}[scale=1.3]
\node at (0,0) (11) {$3$};
\node at (1,0) (21) {$4$};
\node at (2,0) (31) {$2$};
\node at (3,0) (41) {$1$};
\path[-angle 90]
	(11) edge (21)
	(21) edge (31)
	(31) edge (41);
\end{tikzpicture}\]

 Step 3. Take $i_2=1$, then $v_2=3$ and $\sigma^{-1}(i_2)=4$. Let $p_0=4, p_1=3$, \[\mu^{(2)}= \mu_{p_0}\mu_{p_1}\mu_{p_0}\mu_{p_1}\mu_{p_0}=
 \mu_4\mu_3\mu_4\mu_3\mu_4.\]
 Let $(\mathbf{x}''',Q''')=\mu^{(2)}(\mathbf{x}'',Q'')$, then $x_{i_2}$ is in the $\sigma^{-1}(i_2)$-th position in $\mathbf{x}'''$.
 Indeed we have that $\mathbf{x}'''=(x_4,x_3,x_2,x_1)$ and $Q'''$ is given as follows.
 \[\begin{tikzpicture}[scale=1.3]
\node at (0,0) (11) {$4$};
\node at (1,0) (21) {$3$};
\node at (2,0) (31) {$2$};
\node at (3,0) (41) {$1$};
\path[-angle 90]
	(11) edge (21)
	(21) edge (31)
	(31) edge (41);
\end{tikzpicture}\]

Note that $(\mathbf{x}''',Q''')=(\mathbf{x},Q)^{\sigma}$, then we have that $(\mathbf{x},Q)^{\sigma}=\mu^{(2)}\mu^{(3)}\mu^{(4)}(\mathbf{x},Q)$.
\end{Example}

In particular, for exchange matrices of Dynkin type and Euclidean type, strict direct cluster automorphism
groups equal to direct cluster automorphism groups, and the latter have been computed in [\cite{ASS}, Table1], which
also supports a table on strict direct cluster automorphism groups.

\section{Application: Automorphism-finite cluster algebras}\label{4}
In this section, we study automorphism-finite cluster algebras. Recall that a cluster algebra $\mathcal{A}$ is called \textbf{automorphism-finite} if its cluster automorphism group  $\mathrm{Aut}(\mathcal{A})$ is a finite group, otherwise it is called \textbf{automorphism-infinite}.  In \cite{ASS},  Assem, Schiffler and Shamchenko proved a
cluster algebra with an acyclic skew-symmetric exchange matrix or from a surface is automorphism-finite
if and only if it is of Dynkin type. We mainly consider two cases of skew-symmetrizable exchange matrices,
one is cluster algebras with bipartite seeds, the other one is cluster algebras of finite mutation type.

For cluster algebras of type $A,D$ and $E$, their automorphism groups, listed in (\cite{ASS}, Table 3.3), are finite;
 for those of type $B,C,G$ and $F$, their automorphism groups are also finite, given in (\cite{CZ2}, Table 1) due to  (\cite{CZ2}, Theorem 3.5). In summary, we have:

\begin{Lemma}[\cite{ASS,CZ2}]\label{autofiDy}
A skew-symmetrizable cluster algebra of finite type is always  automorphism-finite.
\end{Lemma}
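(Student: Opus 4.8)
The plan is to realize $\mathrm{Aut}(\mathcal{A})$ as a subgroup of the symmetric group on the set of all cluster variables, and then to observe that this set is finite precisely because $\mathcal{A}$ is of finite type. Write $\mathcal{X}$ for the set of cluster variables appearing in $T_n(\mathbf{x},B)$. By the definition of finite type recalled in Subsection 2.2, $\mathcal{X}$ is a finite set; this is the only place where the finite-type hypothesis enters, and for skew-symmetrizable exchange matrices it is underpinned by the Fomin--Zelevinsky classification by Dynkin diagrams together with its extension in \cite{CZ2}.

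First I would show that every cluster automorphism restricts to a permutation of $\mathcal{X}$. By the equivalent characterization (iii) of the Proposition following Definition \ref{cluauto}, a cluster automorphism $f$ maps each cluster to a cluster; since the cluster variables are exactly the members of the clusters, this gives $f(\mathcal{X})\subseteq \mathcal{X}$. Because $\mathrm{Aut}(\mathcal{A})$ is a group, $f^{-1}$ is again a cluster automorphism, so $f^{-1}(\mathcal{X})\subseteq \mathcal{X}$ as well; combining the two inclusions shows that $f$ restricts to a bijection $f|_{\mathcal{X}}\colon \mathcal{X}\to\mathcal{X}$.

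Next I would argue that $f$ is completely determined by $f|_{\mathcal{X}}$. Indeed, $\mathcal{A}$ is by definition the $\mathbb{Z}$-subalgebra of $\mathcal{F}$ generated by $\mathcal{X}$, and $f$ is a $\mathbb{Z}$-algebra homomorphism, so its values on a generating set determine it on all of $\mathcal{A}$. Consequently the assignment $f\mapsto f|_{\mathcal{X}}$ is an injective group homomorphism $\mathrm{Aut}(\mathcal{A})\to \mathrm{Sym}(\mathcal{X})$. Since $\mathcal{X}$ is finite, $\mathrm{Sym}(\mathcal{X})$ is a finite group, and therefore $\mathrm{Aut}(\mathcal{A})$ is finite, which is exactly the assertion.

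I expect no genuine obstacle here: the argument is entirely formal once the finiteness of $\mathcal{X}$ is invoked, and no computation with specific root systems is required. The only step needing a word of care is the injectivity of $f\mapsto f|_{\mathcal{X}}$, which is immediate from $\mathcal{A}$ being generated over $\mathbb{Z}$ by its cluster variables. One could alternatively read the conclusion directly off the explicit automorphism groups tabulated in \cite{ASS} and \cite{CZ2}, but those tables are a refinement of, rather than a prerequisite for, the qualitative finiteness statement proved above.
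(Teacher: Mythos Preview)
Your argument is correct. It differs from the paper's, which does not give a uniform proof at all: the paper simply notes that for the simply-laced types $A$, $D$, $E$ the groups $\mathrm{Aut}(\mathcal{A})$ appear explicitly in \cite[Table~3.3]{ASS}, and for the remaining types $B$, $C$, $F$, $G$ they appear in \cite[Table~1]{CZ2} via \cite[Theorem~3.5]{CZ2}; in each case the listed group is visibly finite. Your route---embedding $\mathrm{Aut}(\mathcal{A})$ into $\mathrm{Sym}(\mathcal{X})$ via restriction to the finite set $\mathcal{X}$ of cluster variables---is more conceptual and avoids both the Cartan--Killing classification and the explicit case-by-case computations in the cited tables; it also makes transparent that the only input is the finiteness of $\mathcal{X}$. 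The paper's approach, by contrast, yields strictly more information (the actual isomorphism types of the automorphism groups), at the cost of relying on those external computations. You correctly note this trade-off in your final sentence.
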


\begin{Lemma}\label{lem3} For a cluster algebra $\mathcal{A}=\mathcal{A}(\mathbf{x},B)$, if there is an $I$-sequence
$\mathbf{i}$ such that $\mathbf{i}$ is a period of $B$ and $\mathbf{i}^N$ is not a period of $(\mathbf{x},B)$
for any $N\in \mathbb{Z}_{>0}$, then $\mathcal{A}$ is automorphism-infinite.
\end{Lemma}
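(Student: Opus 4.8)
The plan is to exhibit an element of infinite order inside $\mathrm{SAut}^+(\mathcal{A})$ and then propagate infinitude up to $\mathrm{Aut}(\mathcal{A})$. The natural source of such an element is the homomorphism $\varphi\colon G(B)\to\mathrm{SAut}^+(\mathcal{A})$ from Theorem \ref{exa1}, whose kernel is exactly $H(\mathbf{x})$, the group of periods of the labeled seed $(\mathbf{x},B)$. Since $\mathbf{i}$ is a period of $B$, the sequence $\mathbf{i}^N$ is again a period of $B$ for every $N$ (apply $\mu_{\mathbf{i}}$ repeatedly to $B$), so $\mu_{\mathbf{i}}$ and all its powers lie in $G(B)$ and may be fed into $\varphi$.

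First I would set $f_{\mathbf{i}}:=\varphi(\mu_{\mathbf{i}})\in\mathrm{SAut}^+(\mathcal{A})$ and compute its order. Because $\varphi$ is a group homomorphism and powers of a single element agree in a group and in its opposite, we have $f_{\mathbf{i}}^{\,N}=\varphi(\mu_{\mathbf{i}})^{N}=\varphi(\mu_{\mathbf{i}}^{N})=\varphi(\mu_{\mathbf{i}^N})$ for every $N\in\mathbb{Z}_{>0}$. Hence $f_{\mathbf{i}}^{\,N}=\mathrm{id}$ if and only if $\mu_{\mathbf{i}^N}\in\ker\varphi=H(\mathbf{x})$, which by the definition of $H(\mathbf{x})$ means precisely that $\mathbf{i}^N$ is a period of $(\mathbf{x},B)$. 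The hypothesis says this never happens, so $f_{\mathbf{i}}^{\,N}\neq\mathrm{id}$ for all $N>0$; that is, $f_{\mathbf{i}}$ has infinite order and $\mathrm{SAut}^+(\mathcal{A})$ is infinite.

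Finally I would transport infinitude to the full automorphism group: since $\mathrm{SAut}^+(\mathcal{A})$ is infinite, Corollary \ref{finitegp} (equivalently, the inclusion $\mathrm{SAut}^+(\mathcal{A})\subseteq\mathrm{Aut}^+(\mathcal{A})$ together with Proposition \ref{cor1}) forces $\mathrm{Aut}(\mathcal{A})$ to be infinite, so $\mathcal{A}$ is automorphism-infinite. The proof is essentially a direct application of the exact sequence in Theorem \ref{exa1}, so I do not expect a serious obstacle; the only points demanding care are the bookkeeping that powers in the opposite group $G(B)$ coincide with iterated mutation along $\mathbf{i}^N$, and the observation that each $\mathbf{i}^N$ remains a period of $B$ so that every $\mu_{\mathbf{i}^N}$ genuinely lies in $G(B)$ and can be evaluated under $\varphi$.
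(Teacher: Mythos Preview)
Your proposal is correct and follows essentially the same route as the paper: both arguments use the exact sequence of Theorem \ref{exa1} to identify $\mathrm{SAut}^+(\mathcal{A})$ with $G(B)/H(\mathbf{x})$, observe that the coset of $\mu_{\mathbf{i}}$ has infinite order because no $\mathbf{i}^N$ lies in $H(\mathbf{x})$, and then invoke Corollary \ref{finitegp} to pass from $\mathrm{SAut}^+(\mathcal{A})$ to $\mathrm{Aut}(\mathcal{A})$. The only cosmetic difference is that the paper works directly in the quotient $G(B)/H(\mathbf{x},B)$ while you work with the image $f_{\mathbf{i}}=\varphi(\mu_{\mathbf{i}})$; these are equivalent via the first isomorphism theorem.
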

\begin{proof}
 Since $\mathbf{i}$ is a period of $B$£¬ we get $\mu_{\mathbf{i}}\in G(B)$  and then $\mu_{\mathbf{i}}H(\mathbf{x},B)\in G(B)/H(\mathbf{x},B)$. And since $\mathbf{i}^N$ is not a period of $(\mathbf{x},B)$ for any $N\in \mathbb{Z}_{>0}$, it follows that the order of $\mu_{\mathbf{i}}H(\mathbf{x},B)$ is infinite as a group element. Hence  the quotient group $G(B)/H(\mathbf{x},B)$ is an infinite group.
 By Theorem \ref{exa1}, $\mathrm{SAut}^+(\mathcal{A})\cong G(B)/H(\mathbf{x},B)$.
Hence,  $\mathrm{Aut}(\mathcal{A})$ is infinite by Corollary \ref{finitegp}, which means that $\mathcal{A}$ is automorphism-infinite.
\end{proof}

\begin{Definition}[\cite{FZ4}]
For a labeled seed $(\mathbf{x},B)$, if there is a function $\varepsilon : [1,n]\rightarrow \{-1,1\}$ such that
\[ \varepsilon(i)=1,\,\, \varepsilon(j)=-1,\, whenever\,\, b_{ij}>0,\]
the labeled seed and its exchange matrix is said to be \textbf{bipartite}.
\end{Definition}
Note that $b_{ij}=0$ implies $\mu_i\mu_j(\mathbf{x},B)=\mu_j\mu_i(\mathbf{x},B)$. This makes the following
compositions of mutation sequences well-defined.
\[ \mu_+=\prod_{\varepsilon(k)=1}\mu_k,\,\,\,\,\, \mu_-=\prod_{\varepsilon(k)=-1}\mu_k.\]

Note that $\mu_+$ and $\mu_-$ are involutions. Since $\mu_+(B)=-B, \mu_-(B)=-B$, $\mu_+$ and $\mu_-$ transform
bipartite seeds to bipartite seeds.

\begin{Definition}[\cite{FZ4}]\label{fintype}
For $s\in \mathbb{Z}_{>0}$, define a sequence of labeled seeds as follows.
\[(\mathbf{x}_s,(-1)^sB) = \underbrace{\mu_{\pm}\dots\mu_+\mu_-}_{s\, factors}(\mathbf{x},B),\]
\[(\mathbf{x}_{-s},(-1)^sB) = \underbrace{\mu_{\mp}\dots\mu_-\mu_+}_{s\, factors}(\mathbf{x},B).\]
We call the family $\{(\mathbf{x}_s,(-1)^sB)\}_{s\in \mathbb{Z}}$ a \textbf{bipartite belt}.
\end{Definition}

Assume that $\mathbf{x}_s=(x_{1;s},x_{2;s},\dots,x_{n;s})$. Fomin and Zelevinsky studied the following
family \begin{equation}\label{family}\{x_{i;s}:\varepsilon(i)=(-1)^s\}.\end{equation}

\begin{Theorem}[\cite{FZ4}]\label{thm3} Suppose that $B$ is an indecomposable bipartite skew-symmetrizable matrix.

(i) If $B$ is of finite type, then the corresponding bipartite belt satisfies
\[ (\mathbf{x}_s,(-1)^sB)=(\mathbf{x}_{s+2(h+2)},(-1)^{s+2(h+2)}B),\,\mbox{for\, any\,\,}m\in \mathbb{Z},\]
wehere $h$ is the corresponding Coxeter number.

(ii) If $B$ is not of finite type, then all the elements $x_{i;m}$ in (\ref{family}) are distinct
viewed as Laurent polynomials in the initial labeled seed.
\end{Theorem}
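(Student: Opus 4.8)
The plan is to translate the dynamics of the bipartite belt into the combinatorics of the (generalized) root system attached to $B$, which is the classical route to Zamolodchikov-type periodicity. Since $B$ is indecomposable, bipartite and skew-symmetrizable, the matrix $A=(a_{ij})$ with $a_{ii}=2$ and $a_{ij}=-|b_{ij}|$ for $i\neq j$ is an indecomposable symmetrizable generalized Cartan matrix, and by the finite-type classification recalled above $B$ is of finite type exactly when $A$ is the Cartan matrix of an irreducible finite root system $\Phi$. Let $\Pi=\{\alpha_1,\dots,\alpha_n\}$ be the simple roots and $\Phi_{\geq -1}=\Phi_{>0}\cup(-\Pi)$ the almost positive roots. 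First I would attach to each cluster variable $z$ occurring along the belt its \emph{denominator vector} $\delta(z)\in\mathbb{Z}^n$, read off from the reduced Laurent expansion of $z$ in the initial cluster $\mathbf{x}$ (guaranteed by the Laurent phenomenon), normalized so that $\delta(x_i)=-\alpha_i$. The bipartition $\varepsilon$ gives two piecewise-linear involutions $\tau_+,\tau_-$ of $\Phi_{\geq -1}$, where $\tau_\varepsilon$ fixes $-\alpha_i$ when $\varepsilon(i)=-\varepsilon$ and otherwise acts by the partial Coxeter element $\sigma_\varepsilon=\prod_{\varepsilon(i)=\varepsilon}s_i$.

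The crucial structural step is to show that mutation along the belt tracks these involutions, i.e. that $\delta$ intertwines $\mu_+,\mu_-$ with $\tau_+,\tau_-$, so that the denominator vectors of the variables produced up to step $s$ are obtained from the $-\alpha_i$ by applying the alternating word of length $s$ in $\tau_+,\tau_-$ (the new variable of the family (\ref{family}) introduced at step $s$ has denominator a genuine positive root). This reduces the entire statement to computing the order of the two-step map $\tau_-\tau_+$ on $\Phi_{\geq -1}$, since passing from $\mathbf{x}_s$ to $\mathbf{x}_{s+2}$ applies $\mu_\mp\mu_\pm$, matched to $\tau_\mp\tau_\pm$.

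For part (i) I would then prove the root-theoretic fact that, for irreducible finite $\Phi$ with Coxeter number $h$, the transformation $\tau_-\tau_+$ has order exactly $h+2$ (its orbits on $\Phi_{\geq -1}$ having lengths $h+2$ or $(h+2)/2$). Consequently the denominator vectors $\delta(x_{i;s})$ are periodic in $s$ with period $2(h+2)$; since in finite type distinct cluster variables have distinct denominator vectors, this forces $\mathbf{x}_s=\mathbf{x}_{s+2(h+2)}$, and because $2(h+2)$ is even the sign factor $(-1)^s$ is restored simultaneously. Proposition \ref{clu} then upgrades equality of clusters to equality of labeled seeds, giving the asserted belt periodicity. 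For part (ii), $A$ is not of finite type, so $\Phi$ is infinite and the bipartite Coxeter-type element $\tau_-\tau_+$ has infinite order on the orbit of each $-\alpha_i$: after the first mutation the iterates remain positive real roots and never repeat, so $\{\delta(x_{i;m})\}$ is a sequence of pairwise distinct vectors; as the denominator vector is recovered from the Laurent expansion in $\mathbf{x}$, the variables $x_{i;m}$ of (\ref{family}) are pairwise distinct Laurent polynomials.

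I expect the main obstacle to be the order computation $\mathrm{ord}(\tau_-\tau_+)=h+2$ together with the faithful matching of $\mu_\pm$ by $\tau_\pm$: this is exactly the combinatorial heart of Zamolodchikov periodicity, and a uniform argument requires controlling precisely when roots cross between $\Phi_{>0}$ and $-\Pi$ under the partial Coxeter elements, the two boundary crossings being responsible for the ``$+2$''; without such a uniform argument one is reduced to a case-by-case verification over the Dynkin types $A$ through $G$. A secondary but essential difficulty is the injectivity of the denominator map on cluster variables in finite type, which is what licenses the passage from periodicity of roots back to periodicity of the seeds, and (for part (ii)) the corresponding statement that the displayed roots are genuinely distinct in an infinite root system.
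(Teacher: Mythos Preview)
The paper does not prove this theorem at all: it is quoted verbatim from Fomin--Zelevinsky \cite{FZ4} and used as a black box in the subsequent application. There is therefore no ``paper's own proof'' to compare your proposal against.

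That said, what you have sketched is precisely the Fomin--Zelevinsky argument from \cite{FZ2,FZ3,FZ4}: the identification of the bipartite belt dynamics with the $\tau_\pm$-dynamics on almost positive roots via denominator vectors, the computation $\mathrm{ord}(\tau_-\tau_+)=h+2$, and (for part (ii)) the infinite-order behaviour of the bipartite Coxeter element outside finite type. Your self-assessment of the difficulties is accurate: the intertwining of $\mu_\pm$ with $\tau_\pm$ on denominator vectors and the injectivity of the denominator map in finite type are genuine theorems (proved in \cite{FZ2,FZ4}), not formalities, and the order computation is the combinatorial core. One small correction: you write that periodicity of denominator vectors plus injectivity of $\delta$ gives $\mathbf{x}_s=\mathbf{x}_{s+2(h+2)}$, and then invoke Proposition~\ref{clu} to recover the exchange matrix; but in the bipartite belt the exchange matrix is already known to be $(-1)^sB$ by construction, so Proposition~\ref{clu} is not needed here. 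Otherwise the outline is sound and faithful to the original source.
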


As an application of Lemma \ref{lem3}, we consider cluster algebras of finite mutation type, which have
been classified in \cite{FST1,FST2}.

\begin{Theorem}
 If a cluster algebra $\mathcal{A}=\mathcal{A}(\mathbf{x},B)$  is either of finite mutation
type or its initial exchange matrix $B$ is an indecomposable bipartite matrix, then
$\mathcal{A}$ is automorphism-finite if and only if $\mathcal{A}$ is of finite type.
\end{Theorem}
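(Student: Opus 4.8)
The plan is to prove the two implications separately and, for the harder converse, to split along the two hypotheses on $B$. The \emph{if} direction is immediate: if $\mathcal{A}$ is of finite type then Lemma \ref{autofiDy} already gives that $\mathcal{A}$ is automorphism-finite, and this holds under either of the two hypotheses. So the whole content lies in the \emph{only if} direction, which I would establish in contrapositive form: if $\mathcal{A}$ is not of finite type, then $\mathcal{A}$ is automorphism-infinite. I would treat the finite mutation type case and the bipartite case by different mechanisms, since only the latter has direct access to the bipartite-belt machinery.

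For the finite mutation type case the key is a pigeonhole count. If $\mathcal{A}$ is not of finite type there are infinitely many cluster variables, hence infinitely many labeled seeds; if $\mathcal{A}$ is of finite mutation type only finitely many exchange matrices occur in the pattern. Therefore some matrix $B^{*}$, mutation equivalent to $B$, appears in infinitely many labeled seeds, i.e. there are infinitely many labeled clusters $\mathbf{y}$ with $B(\mathbf{y})=B^{*}$. Taking $(\mathbf{x}^{*},B^{*})$ as the initial seed (legitimate since the relevant groups are independent of the choice of seed in the mutation class), Theorem \ref{exa1} gives $\mathrm{SAut}^{+}(\mathcal{A})\cong G(B^{*})/H(\mathbf{x}^{*})$. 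The proof of that theorem identifies a coset $\mu_{\mathbf{i}}H(\mathbf{x}^{*})$ with the labeled cluster $\mu_{\mathbf{i}}(\mathbf{x}^{*})$, and connectedness of the pattern shows every labeled cluster of matrix $B^{*}$ arises this way; hence the quotient surjects onto the infinite set of such labeled clusters and is itself infinite. Then $\mathrm{SAut}^{+}(\mathcal{A})$ is infinite, and Corollary \ref{finitegp} forces $\mathrm{Aut}(\mathcal{A})$ to be infinite, as wanted.

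For the bipartite case I would invoke Lemma \ref{lem3} with an explicit period. Set $\mu_{\mathbf{i}}=\mu_{+}\mu_{-}$, which is well defined because two vertices of equal sign carry no arrow. By Definition \ref{fintype} one has $\mu_{+}\mu_{-}(\mathbf{x},B)=(\mathbf{x}_{2},B)$, so the exchange matrix is restored and $\mathbf{i}$ is a period of $B$; iterating gives $\mu_{\mathbf{i}}^{N}(\mathbf{x},B)=(\mathbf{x}_{2N},B)$. Now Theorem \ref{thm3}(ii), applied to the indecomposable bipartite matrix $B$ which is not of finite type, says all the cluster variables $x_{i;s}$ with $\varepsilon(i)=(-1)^{s}$ are distinct as Laurent polynomials. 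Choosing any $i$ with $\varepsilon(i)=1$ (such $i$ exists, since the indecomposable non-finite-type $B$ is nonzero and bipartite, forcing both signs to occur), the entries $x_{i;0}$ and $x_{i;2N}$ differ for every $N\geq 1$, so $\mathbf{x}_{2N}\neq\mathbf{x}$ and $\mathbf{i}^{N}$ is not a period of $(\mathbf{x},B)$. Lemma \ref{lem3} then yields that $\mathcal{A}$ is automorphism-infinite.

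The deepest ingredient, and the step I expect to be the main obstacle, is the bipartite case's reliance on the Fomin--Zelevinsky distinctness result Theorem \ref{thm3}(ii); everything else there is a short verification that $\mu_{+}\mu_{-}$ is a genuine period whose powers never fix the cluster. In the finite mutation type case the only subtle point is the clean translation, via Theorem \ref{exa1}, of ``infinitely many labeled clusters sharing a fixed exchange matrix'' into ``$\mathrm{SAut}^{+}(\mathcal{A})$ is infinite''; the pigeonhole itself is routine. Minor bookkeeping to pin down is the existence of a positive-sign vertex in the bipartite case and the opposite-group convention in $G(B^{*})$, neither of which should cause real difficulty.
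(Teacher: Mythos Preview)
Your proof is correct. The ``if'' direction and the bipartite case are handled exactly as in the paper: the paper also invokes Lemma~\ref{lem3} together with Theorem~\ref{thm3} for the bipartite part, and your verification that $\mu_{+}\mu_{-}$ is a period of $B$ whose powers never fix the labeled seed is precisely the content the paper leaves implicit.

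For the finite mutation type case, however, you take a genuinely different route. The paper argues constructively: since $\mathcal{A}$ is not of finite type, some seed has $|b_{12}b_{21}|>3$; since $\mathcal{A}$ is of finite mutation type, the alternating word $\mu_{1}\mu_{2}\mu_{1}\cdots$ eventually returns $B$ to itself, yielding a period $\mathbf{i}$ of $B$; then the Restriction Theorem (Theorem~\ref{RE}) forces any hypothetical relation $\mathbf{i}^{N}\in H(\mathbf{x})$ down to the rank~$2$ subseed $(\hat{\mathbf{x}},\hat B)$, contradicting the infinite type of a rank~$2$ seed with $|b_{12}b_{21}|>3$, so Lemma~\ref{lem3} applies. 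Your argument bypasses both the rank~$2$ reduction and Lemma~\ref{lem3} entirely: pigeonhole on exchange matrices produces some $B^{*}$ carried by infinitely many labeled clusters, and the bijection (implicit in Theorem~\ref{exa1}) between $G(B^{*})/H(\mathbf{x}^{*})\cong\mathrm{SAut}^{+}(\mathcal{A})$ and the set of labeled clusters with matrix $B^{*}$ immediately gives infiniteness. Your approach is shorter and more conceptual, and it avoids the Restriction Theorem; the paper's approach has the advantage of exhibiting an explicit infinite-order element of $\mathrm{SAut}^{+}(\mathcal{A})$ rather than merely proving one exists.
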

\begin{proof}

If $B$ is an indecomposable bipartite matrix, then it  follows from Lemma \ref{lem3} and Theorem \ref{thm3}.

Assume that $\mathcal{A}$ is of finite mutation type and is not of finite type. Then there is
a labeled seed $(\mathbf{x}',B')$ such that $|b'_{ij}b'_{ji}|>3$ for some $i,j\in [1,n]$.
Without loss of generality, we may assume that $|b_{12}b_{21}|>3$.
For $m\in Z_{>0}$, define a sequence of labeled seeds as follows.
\[(\mathbf{x}_m,B_m)=\underbrace{\mu_{1or2}\dots\mu_1\mu_2}_{m\,factors}(\mathbf{x},B),\]
\[(\mathbf{x}_{-m},B_{-m})=\underbrace{\mu_{2or1}\dots\mu_2\mu_1}_{m\,factors}(\mathbf{x},B).\]
Since $\mathcal{A}$ is of finite mutation type, there exists $k\in \mathbb{Z}_{>0}$ such that
$B_k=B$. Let $\mathbf{i}=\{1or2,\dots, 1, 2\}$ be the sequence with $k$ factors so that we have
$\mathbf{i}$ is a period of $B$. If there exists some $N\in \mathbb{Z}_{>0}$ such that
$\mathbf{i}^N$ is a period of $(\mathbf{x},B)$, by Restriction Theorem, $\mathbf{i}^N$ is a period of $(\hat{\mathbf{x}},\hat{B})$,
where $\hat{\mathbf{x}}=(x_1,x_2)$,and $\hat{B}=\begin{bmatrix}  0&b_{12}\\ b_{21} &0 \end{bmatrix}$ with $|b_{12}b_{21}|>3$.
This implies $\mathcal{A}(\hat{\mathbf{x}},\hat{B})$ is of finite type, which is a contradiction.
Then $\mathbf{i}^N$ is not a period of $(\mathbf{x},B)$ for any $N\in \mathbb{Z}_{>0}$.
By Lemma \ref{lem3}, $\mathcal{A}$ is not automorphism-finite.
\end{proof}
\begin{Remark}
There exist  cluster algebras of infinite mutation type whose cluster automorphism groups are finite.
In particular, they are not of  Dynkin type.  Indeed it is proved in \cite{W} that if $Q$ is a mutation-cyclic
$3$-point-quiver, then $G(Q)=1$ and it implies that the corresponding cluster algebra is automorphism-finite.
\end{Remark}

\section{Sufficient conditions for $\mathrm{Aut}\mathcal{A}\cong\mathrm{Aut}_{M_n}S$}\label{final}
In this section, we consider the relations between cluster automorphism group $\mathrm{Aut}\mathcal{A}$
and the automorphism group $\mathrm{Aut}_{M_n}S$ of a labeled mutation class $S$ which is defined in \cite{KP} via periodicities in cluster algebras.
Note that in the sequel we can only involve the cluster algebras in the skew-symmetric case.

\begin{Definition}[\cite{KP}] Given a labeled  seed $(\mathbf{x},B)$ of rank $n$ with $B$ is skew-symmetric.
\begin{enumerate}
\item [(1)] The global mutation group for labeled seeds of rank $n$ is given by
\[ M_n =S_n \ltimes \langle\mu_1, \mu_2, \dots, \mu_n : \mu_i^2=1\rangle  ,\]
where $\mu_i$ are mutations and $\mu_i\sigma=\sigma\mu_{\sigma(i)}$ for $\sigma\in S_n$.

\item [(2)] The labeled mutation class $S$ of $(\mathbf{x},B)$ is the orbit of $(\mathbf{x},B)$
under the action $M_n$.

\item [(3)] The automorphism group $\mathrm{Aut}_{M_n}(S)$ of $S$ consists of bijections from $S$ to itself which
commute with the action of $M_n$.

\item [(4)] We denote by $W$ the subgroup of $\mathrm{Aut}_{M_n}(S)$ defined as follows
 \[W=\{f\in \mathrm{Aut}_{M_n}(S)|\,if\,\, f(\mathbf{x},B)=(\mathbf{x}',B'),\,\, then\,\, B'=B\,\, or\,\, B'=-B\}.\]
\end{enumerate}
\end{Definition}

King and Pressland gave relations between these groups with cluster automorphism groups and propose a problem in \cite{KP}.

\begin{Theorem}[\cite{KP}]\label{KP1}
For any labeled  seed $(\mathbf{x},B)$, the automorphism group  $\mathrm{Aut}\mathcal{A}$ is isomorphic
to the subgroup $W$ of $\mathrm{Aut}_{M_n}(S)$.
In particular, if $B$ is of finite mutation type, then $W= \mathrm{Aut}_{M_n}(S)$ and thus
\[\mathrm{Aut}\mathcal{A}\cong \mathrm{Aut}_{M_n}(S).\]
\end{Theorem}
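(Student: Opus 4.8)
The plan is to build mutually inverse maps between $\mathrm{Aut}\mathcal{A}$ and $W$, and then separately to promote the inclusion $W\subseteq\mathrm{Aut}_{M_n}(S)$ to equality under the finite mutation type hypothesis. First I would define $\Phi\colon\mathrm{Aut}\mathcal{A}\to\mathrm{Aut}_{M_n}(S)$ by sending a cluster automorphism $\phi$ to the map $F_\phi$ that assigns to a labeled seed $(\mathbf{y},C)\in S$ the labeled seed $(\phi(\mathbf{y}),B(\phi(\mathbf{y})))$, where $B(\phi(\mathbf{y}))$ is the exchange matrix determined by the cluster $\phi(\mathbf{y})$ via Proposition \ref{clu}. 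Since $\phi$ maps clusters to clusters, $F_\phi$ lands in $S$; it is a bijection with inverse $F_{\phi^{-1}}$; and $\Phi$ is a homomorphism because $F_{\phi\circ\psi}=F_\phi\circ F_\psi$. Compatibility of $\phi$ with mutations (Definition \ref{cluauto}(2)) gives commutation of $F_\phi$ with each $\mu_k$, while acting componentwise shows commutation with $S_n$, so $F_\phi$ commutes with all of $M_n$. Finally the direct/inverse dichotomy for cluster automorphisms forces $B(\phi(\mathbf{x}))=B$ or $-B$, so in fact $F_\phi\in W$.

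For the reverse map I would, given $f\in W$ with $f(\mathbf{x},B)=(\mathbf{y},C)$ and $\mathbf{y}=(y_1,\dots,y_n)$, define a field automorphism $\phi$ of $\mathcal{F}$ by $\phi(x_i)=y_i$, which is legitimate since $\mathbf{y}$ is a free generating set. The crux is to show $\phi$ restricts to a cluster automorphism of $\mathcal{A}$ with $F_\phi=f$. I would prove, by induction on distance in the tree $T_n$, that for every seed $(\mathbf{y}_t,C_t)$ of $S$ the cluster $\phi(\mathbf{y}_t)$ is exactly the cluster of $f(\mathbf{y}_t,C_t)$. The inductive step applies the ring map $\phi$ to the exchange relation $y_{k,t'}=\big(\prod_i y_{i,t}^{[b^t_{ik}]_+}+\prod_i y_{i,t}^{[-b^t_{ik}]_+}\big)/y_{k,t}$; because $f$ commutes with $\mu_k$ and $f\in W$ makes the exchange matrix at $f(\mathbf{y}_t,C_t)$ equal to $\pm C_t$, and because this relation is manifestly invariant under $C_t\mapsto -C_t$ (negation only swaps the two numerator monomials), the image $\phi(y_{k,t'})$ coincides with the mutated variable of $f(\mathbf{y}_t,C_t)$. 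This simultaneously shows $\phi$ sends clusters to clusters and is compatible with mutations, hence $\phi\in\mathrm{Aut}\mathcal{A}$; that $\Phi(\phi)=f$ and that the constructions are mutually inverse is then formal. This gives $\mathrm{Aut}\mathcal{A}\cong W$.

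For the finite mutation type statement I must show $W=\mathrm{Aut}_{M_n}(S)$, i.e.\ that every $f\in\mathrm{Aut}_{M_n}(S)$ already satisfies $C=\pm B$ when $f(\mathbf{x},B)=(\mathbf{y},C)$. I would first record that $M_n$-equivariance forces $\mathrm{Stab}_{M_n}(\mathbf{x},B)=\mathrm{Stab}_{M_n}(\mathbf{y},C)$: any $w$ fixing $(\mathbf{x},B)$ fixes $f(\mathbf{x},B)=(\mathbf{y},C)$, and symmetrically through $f^{-1}$. Writing $(\mathbf{y},C)=g\cdot(\mathbf{x},B)$, this equality says $g$ normalizes $H:=\mathrm{Stab}_{M_n}(\mathbf{x},B)$, so the task reduces to showing that an element of $N_{M_n}(H)$ can move $B$ only to $\pm B$. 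The plan is to exploit that the mutation class of $B$ is finite: it supplies abundant periodicities populating $H$, and restricting these to rank-two full subseeds (Theorem \ref{RE}) detects each $|b_{ij}|\le 2$ and hence pins down the underlying valued graph of $C$ to agree with that of $B$; the remaining ambiguity is exactly the global orientation, which should collapse to the single sign $C=\pm B$ using indecomposability of $B$.

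I expect this last step to be the main obstacle. Matching absolute values of entries does not by itself yield $C=\pm B$, since one could a priori reverse the orientation of some but not all arrows; ruling this out is precisely the rigidity that fails in infinite mutation type (where the paper itself produces counterexamples), and it is where the finiteness of the mutation class must be used in an essential, non-formal way. By contrast, everything in the first two paragraphs is bookkeeping around the invariance of the exchange relation under ring maps and under $C\mapsto -C$, together with the direct/inverse dichotomy.
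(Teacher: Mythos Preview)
The paper does not itself prove Theorem~\ref{KP1}; it is quoted from \cite{KP} without proof, so there is no in-paper argument to compare against. On the merits of your sketch:

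Your construction of the isomorphism $\mathrm{Aut}\mathcal{A}\cong W$ is correct and is the standard one; the key observation, that the exchange relation is invariant under $C\mapsto -C$ and therefore transports along any $f\in W$, is exactly right.

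For the second part, your reduction to $P(\mathbf{x},B)=P(\mathbf{y},C)$ is correct, and combining rank-$2$ restriction (via Theorem~\ref{RE} and Example~\ref{rank2}) with the bound $|b'_{ij}|\le 2$ that holds throughout a skew-symmetric finite mutation class of rank $\ge 3$ does force $|c_{ij}|=|b_{ij}|$ for all $i,j$. But your closing step --- that indecomposability collapses the remaining orientation ambiguity to a single global sign --- is an assertion, not an argument. Rank-$2$ periodicities depend only on $|b_{ij}|$, so nothing you have written excludes a $C$ obtained from $B$ by reversing some arrows and not others; indecomposability by itself says only that such local reversals propagate along the quiver, not that they are forbidden. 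Closing this gap is not a formality: it requires a rank-$3$ analysis comparing inflexions of $Q(B)$ and $Q(C)$ under the hypothesis of equal periodicities. That is the content of King--Pressland's Lemma~6.9, which the present paper cites and sharpens in Lemmas~\ref{le2}--\ref{le4} and Corollary~\ref{pr}. You have correctly located the obstacle, but your proposal stops exactly where the substantive work begins.
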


\begin{Problem}[\cite{KP}]\label{prob}
Does the property $W= \mathrm{Aut}_{M_n}(S)$ characterise the finite mutation
type?
\end{Problem}

 Now, we answer this question negatively by finding a number of cluster algebras which are not of finite mutation type,
however $W= \mathrm{Aut}_{M_n}(S)$ holds. First, we have the following lemma:

\begin{Lemma}\label{le0}
Let $(\mathbf{x},B)$ be an arbitrary labeled seed of rank $n$, then the following statements hold.
\begin{enumerate}
\item[(i)](\cite{Nak}) The labeled seed $(\mathbf{x}',-B)$ has the same periods with $(\mathbf{x},B)$, i.e., $P(\mathbf{x},B)=P(\mathbf{x}',-B)$,
\item[(ii)] If $P(\mathbf{x},B)=P(\mathbf{x}',B')$ for some labeled seed $(\mathbf{x}',B')$ of rank $n$,
then \[P(\mu_{\mathbf{i}}(\mathbf{x},B))=P(\mu_{\mathbf{i}}(\mathbf{x}',B'))\] for any $I$-sequence $\mathbf{i}$.
\end{enumerate}
\end{Lemma}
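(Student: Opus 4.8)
The plan is to treat the two parts separately, with essentially all of the work concentrated in one elementary conjugation identity needed for (ii). For (i) I would simply invoke Nakanishi \cite{Nak}, but the reason is worth recording. The cluster-variable exchange relation is unchanged under $B\mapsto -B$, since the two monomials $\prod x_i^{[b_{ik}]_+}$ and $\prod x_i^{[-b_{ik}]_+}$ merely swap, while matrix mutation satisfies $\mu_k(-B)=-\mu_k(B)$. Consequently $\mu_{\mathbf{i}}(\mathbf{x},-B)=(\mathbf{y},-C)$ precisely when $\mu_{\mathbf{i}}(\mathbf{x},B)=(\mathbf{y},C)$, so the two seeds are fixed by exactly the same $I$-sequences. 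That the cluster $\mathbf{x}'$ may be taken arbitrary follows because the period set is insensitive to the names of the free generators: the $\mathbb{Q}$-algebra automorphism of $\mathcal{F}$ carrying one cluster onto another intertwines every mutation, and hence preserves the property of being a period.

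For (ii) the key step I would isolate first is a conjugation identity, which is the precise form of the ``natural bijection between the sets of periods'' already noted after the definition of periods. For any labeled seed $\Sigma$ and $I$-sequences $\mathbf{i},\mathbf{l}$ I claim
\[ \mathbf{l}\in P(\mu_{\mathbf{i}}(\Sigma)) \iff \mathbf{i}\mathbf{l}\mathbf{i}^{-1}\in P(\Sigma). \]
To verify this, note that $\mathbf{l}\in P(\mu_{\mathbf{i}}(\Sigma))$ says $\mu_{\mathbf{l}}\mu_{\mathbf{i}}(\Sigma)=\mu_{\mathbf{i}}(\Sigma)$, which upon applying $\mu_{\mathbf{i}}^{-1}$ is equivalent to $\mu_{\mathbf{i}}^{-1}\mu_{\mathbf{l}}\mu_{\mathbf{i}}(\Sigma)=\Sigma$. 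Since each $\mu_k$ is an involution we have $\mu_{\mathbf{i}}^{-1}=\mu_{\mathbf{i}^{-1}}$, so the composite $\mu_{\mathbf{i}}^{-1}\mu_{\mathbf{l}}\mu_{\mathbf{i}}$ equals $\mu_{\mathbf{i}\mathbf{l}\mathbf{i}^{-1}}$ as a transformation of seeds, and the displayed fixed-point condition is exactly the assertion $\mathbf{i}\mathbf{l}\mathbf{i}^{-1}\in P(\Sigma)$.

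With this identity in hand the conclusion is immediate. Assuming $P(\mathbf{x},B)=P(\mathbf{x}',B')$, for any $I$-sequence $\mathbf{i}$ and any test sequence $\mathbf{l}$ one has
\[ \mathbf{l}\in P(\mu_{\mathbf{i}}(\mathbf{x},B)) \iff \mathbf{i}\mathbf{l}\mathbf{i}^{-1}\in P(\mathbf{x},B)=P(\mathbf{x}',B') \iff \mathbf{l}\in P(\mu_{\mathbf{i}}(\mathbf{x}',B')), \]
so the two period sets coincide. I expect the only genuine obstacle to be bookkeeping rather than mathematics: one must keep the composition-order convention straight (because $\mu_{\mathbf{i}}=\mu_{i_s}\cdots\mu_{i_1}$ applies $\mu_{i_1}$ first, the composite above reads as the concatenation $\mathbf{i}\mathbf{l}\mathbf{i}^{-1}$), and one must remember that a period here forces both the exchange matrix and the labeled cluster to return, so that the equivalences above are genuine statements of membership in the sets $P(\cdot)$ of $I$-sequences and not merely equalities of mutation maps.
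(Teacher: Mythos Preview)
Your argument is correct and coincides with the paper's own proof: for (i) the paper simply cites \cite{Nak}, and for (ii) it proves one inclusion by exactly the conjugation step you isolate, namely $\mathbf{j}\in P(\mu_{\mathbf{i}}(\mathbf{x},B))\Leftrightarrow \mu_{\mathbf{i}}^{-1}\mu_{\mathbf{j}}\mu_{\mathbf{i}}(\mathbf{x},B)=(\mathbf{x},B)$, then transfers this across the hypothesis $P(\mathbf{x},B)=P(\mathbf{x}',B')$ and undoes the conjugation. Your version packages this as a clean biconditional and keeps careful track of the concatenation convention, which is a useful addition, but the mathematical content is identical.
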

\begin{proof}
For (ii), it is enough to prove that
$$P(\mu_{\mathbf{i}}(\mathbf{x},B))\subset P(\mu_{\mathbf{i}}(\mathbf{x}',B')).$$

In fact, let $\mathbf{j}$, as an $I$-sequence, be a period of $\mu_{\mathbf{i}}(\mathbf{x},B)$, then we have
 $\mu_{\mathbf{j}}\mu_{\mathbf{i}}(\mathbf{x},B)=\mu_{\mathbf{i}}(\mathbf{x},B)$, thus
$\mu_{\mathbf{i}}^{-1}\mu_{\mathbf{j}}\mu_{\mathbf{i}}(\mathbf{x},B)=(\mathbf{x},B)$; then since $P(\mathbf{x},B)=P(\mathbf{x}',B')$, we have
$\mu_{\mathbf{i}}^{-1}\mu_{\mathbf{j}}\mu_{\mathbf{i}}(\mathbf{x}',B')=(\mathbf{x}',B')$; as follows, we obtain
$\mu_{\mathbf{j}}\mu_{\mathbf{i}}(\mathbf{x}',B')=\mu_{\mathbf{i}}(\mathbf{x}',B')$.
Therefore $\mathbf{j}$ is also a period of $\mu_{\mathbf{i}}(\mathbf{x},B)$.
\end{proof}

\begin{Lemma}\label{le1} Let $(\mathbf{x},B)$ and $(\mathbf{x}',B')$ be two labeled seeds of rank $n$.
Assume that $P(\mathbf{x},B)=P(\mathbf{x}',B')$, then for any $i,j\in [1,n]$,
the following statements hold:
 \begin{enumerate}
\item[(i)] $|b_{ij}|=s$ if and only if $|b_{ij}'|=s$ for $s=0, 1$;
\item[(ii)] $|b_{ij}|\geqslant 2$ if and only if $|b_{ij}'|\geqslant 2$.
\end{enumerate}
\end{Lemma}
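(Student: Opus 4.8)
The plan is to localise the problem to rank-two full subseeds and then recover the size of a single entry $|b_{ij}|$ from the lengths of the shortest nontrivial periods supported on $\{i,j\}$. Fix $i,j\in[1,n]$ and put $I=\{i,j\}$. Let $(\hat{\mathbf{x}},\hat B)$ and $(\hat{\mathbf{x}}',\hat B')$ denote the full subseeds of $(\mathbf{x},B)$ and $(\mathbf{x}',B')$ on $I$; since $B$ and $B'$ are skew-symmetric these are rank-two seeds governed by the single parameters $|b_{ij}|$ and $|b_{ij}'|$ respectively, the orientation being irrelevant for periods by Lemma \ref{le0}(i). The first step is to show that, for $I$-sequences, being a period of the ambient seed is the same as being a period of its rank-two subseed. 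The implication ambient $\Rightarrow$ restriction is the Restriction part of Theorem \ref{RE} with $\sigma=\mathrm{id}$, and the converse is its Extension part. Consequently $P(\mathbf{x},B)$ and $P(\hat{\mathbf{x}},\hat B)$ contain exactly the same $I$-sequences, and likewise for the primed seeds.

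Next I would read off $|b_{ij}|$ from the rank-two period set via Example \ref{rank2}. Introduce the two distinguished alternating $I$-sequences $\mathbf{a}_{ij}$ of length $4$ and $\mathbf{b}_{ij}$ of length $10$, realising the transformations $(\mu_i\mu_j)^2$ and $(\mu_i\mu_j)^5$ (which are identities precisely when their inverses $(\mu_j\mu_i)^2$, $(\mu_j\mu_i)^5$ are, so the choice of starting index is immaterial). Inspecting the three rank-two cases gives the following. If $|b_{ij}|=0$, then $\mu_i$ and $\mu_j$ commute and square to the identity, so $(\mu_i\mu_j)^2=\mathrm{id}$ while $(\mu_i\mu_j)^5=\mu_i\mu_j\neq\mathrm{id}$; hence $\mathbf{a}_{ij}$ is a period and $\mathbf{b}_{ij}$ is not. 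If $|b_{ij}|=1$ (type $A_2$), the group of mutation transformations on the ten labelled seeds is dihedral of order ten, in which $\mu_i\mu_j$ has order five, so $(\mu_i\mu_j)^5=\mathrm{id}$ but $(\mu_i\mu_j)^2\neq\mathrm{id}$; hence $\mathbf{b}_{ij}$ is a period and $\mathbf{a}_{ij}$ is not. Finally, if $|b_{ij}|\geqslant 2$ the subseed is of infinite type, so by Example \ref{rank2}(3) no essential $I$-sequence is a period, and neither $\mathbf{a}_{ij}$ nor $\mathbf{b}_{ij}$ belongs to the period set. Thus membership of $\mathbf{a}_{ij}$ detects $|b_{ij}|=0$, membership of $\mathbf{b}_{ij}$ detects $|b_{ij}|=1$, and non-membership of both detects $|b_{ij}|\geqslant 2$.

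Combining the two steps concludes the argument. Since $\mathbf{a}_{ij}$ and $\mathbf{b}_{ij}$ are $I$-sequences, Step 1 gives $\mathbf{a}_{ij}\in P(\mathbf{x},B)\Leftrightarrow\mathbf{a}_{ij}\in P(\hat{\mathbf{x}},\hat B)$ and similarly for $\mathbf{b}_{ij}$, and the hypothesis $P(\mathbf{x},B)=P(\mathbf{x}',B')$ transfers these memberships verbatim to the primed seed. Reading off the classification of Step 2 on each side yields $|b_{ij}|=0\Leftrightarrow|b_{ij}'|=0$ and $|b_{ij}|=1\Leftrightarrow|b_{ij}'|=1$, which is (i), and therefore also $|b_{ij}|\geqslant 2\Leftrightarrow|b_{ij}'|\geqslant 2$, which is (ii). The one point requiring genuine care is Step 1: as the text records immediately after Theorem \ref{RE}, the Extension direction fails for bare exchange matrices, so I must retain the full labelled-seed data throughout and invoke the labelled-seed form of the Extension Theorem; the remainder is the routine bookkeeping of the three rank-two cases.
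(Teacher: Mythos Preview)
Your proof is correct and follows essentially the same approach as the paper: reduce to rank-two full subseeds via both directions of Theorem~\ref{RE}, then use the rank-two classification of Example~\ref{rank2} to read off whether $|b_{ij}|$ is $0$, $1$, or $\geqslant 2$. The only cosmetic difference is that the paper phrases the trichotomy in terms of the full period groups $H(\hat{\mathbf{x}},\hat B)$, while you pick out two explicit detector sequences $(\mu_i\mu_j)^2$ and $(\mu_i\mu_j)^5$; your version is in fact the more explicit unpacking of what the paper's one-line proof intends.
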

\begin{proof}
It follows from Theorem \ref{RE} and Example \ref{rank2} that, for any labeled seed
$(\hat{\mathbf{x}}, \hat{B})$ of rank $2$, $H(\hat{\mathbf{x}}, \hat{B})=\{(\mu_1\mu_2)^{2m})|m\in \mathbb{Z}\}$
if and only if $|\hat{b}_{12}|=0$, $H(\hat{\mathbf{x}}, \hat{B})=\{(\mu_1\mu_2)^{5m})|m\in \mathbb{Z}\}$
if and only if $|\hat{b}_{12}|=1$, and $H(\hat{\mathbf{x}}, \hat{B})=\mathbf{1}$
if and only if $|\hat{b}_{12}|\geqslant 2$.
\end{proof}

\begin{Lemma}\label{le2} Let $(\mathbf{x},B)$ and $(\mathbf{x}',B')$ be two labeled seeds of rank $3$.
Suppose the corresponding quiver $Q$ of $B$ is acyclic and has the following form
\[\mathord{\begin{tikzpicture}[scale=1.3,baseline=0]
\node at (0,0.5) (2) {$2$};
\node at (-1,-0.5) (1) {$1$};
\node at (1,-0.5) (3) {$3$};
\path[-angle 90]
	(1) edge node [left] {$a$} (2)
	(2) edge node [right] {$b$} (3)
	(1) edge node [below] {$c$} (3);
\end{tikzpicture}}\]
with $\mathrm{min}\{a,b,c\}=1$. If $P(\mathbf{x},B)=P(\mathbf{x}',B')$, then the quiver $Q'$
of $B'$ is also acyclic.
\end{Lemma}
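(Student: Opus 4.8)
The plan is to argue by contradiction, turning the hypothesis $P(\mathbf{x},B)=P(\mathbf{x}',B')$ into a statement about \emph{whole mutation trajectories} and then exploiting the rigidly different way an edge of a triangle behaves under mutation according to whether the triangle is acyclic or cyclic. First, by Lemma \ref{le1} every edge of $Q'$ lies in the same class ($0$, $1$, or $\geqslant 2$) as the corresponding edge of $Q$; since all three edges of $Q$ are present, $Q'$ is again a triangle on the labelled vertices $\{1,2,3\}$, and the only thing to rule out is that $Q'$ is a directed $3$-cycle. Suppose it is. Combining Lemma \ref{le0}(ii) with Lemma \ref{le1} gives the running tool: for \emph{every} $I$-sequence $\mathbf{i}$ the quivers $\mu_{\mathbf{i}}(B)$ and $\mu_{\mathbf{i}}(B')$ have the same edge-class pattern, because $P(\mu_{\mathbf{i}}(\mathbf{x},B))=P(\mu_{\mathbf{i}}(\mathbf{x}',B'))$. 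Thus it suffices to produce a single word $\mathbf{i}$ along which the class patterns diverge.

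The mechanism I would use is the contrast between \emph{addition} and \emph{subtraction}. With the given orientation, vertex $1$ is the source, vertex $3$ the sink and vertex $2$ the middle. Mutating $Q$ at the source or the sink changes no weight (there is no $2$-path through them), while mutating at the middle vertex $2$ replaces the weight $c$ of the edge $(1,3)$ by $c+ab$, since the created path arrow $1\to 3$ points the \emph{same} way as the existing edge $1\to 3$; in particular $c+ab\geqslant 2$ always lands in class $\geqslant 2$. By contrast, in a directed $3$-cycle every vertex sits between its two incident arrows with the third edge \emph{opposing} the created path arrow, so mutating $Q'$ at any vertex replaces the opposite weight $r$ by $|pq-r|$, a subtraction. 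This add-versus-subtract dichotomy is exactly the invariant sign of $b_{12}b_{23}b_{31}$, and the point of the hypothesis $\min\{a,b,c\}=1$ is to convert the subtraction into a \emph{class drop}: choosing the mutation vertex relative to the position of the simple edge, one arranges $|pq-r|\in\{0,1\}$ while the matching mutation of $Q$ keeps the same labelled edge in class $\geqslant 2$ (or forces it into class $\geqslant 2$ by the additive rule), contradicting the trajectory equality above. I expect this to close the base cases directly, e.g. pattern $\{1,1,1\}$ (mutating at vertex $2$ sends the $(1,3)$-edge to class $\geqslant 2$ for $Q$ but makes it vanish for the cycle $A_3$) and pattern $\{1,1,\geqslant 2\}$.

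For the remaining configurations I would split on whether $Q'$ is mutation-cyclic or mutation-acyclic. If $Q'$ is mutation-cyclic, Warkentin's theorem \cite{W} on $3$-point quivers forces its matrix-period structure to be trivial, which is incompatible with the nontrivial periodicity that the simple edge of $Q$ produces on $(\mathbf{x},B)$ via Example \ref{rank2} and the Extension Theorem \ref{RE}, and which transfers to $(\mathbf{x}',B')$ by $P(\mathbf{x},B)=P(\mathbf{x}',B')$; hence $Q'$ must be mutation-acyclic. The genuinely delicate case — and the step I expect to be the main obstacle — is a $Q'$ that is cyclic \emph{as drawn} but mutation-acyclic with large weights: here Lemma \ref{le1} only sees the coarse trichotomy $0/1/\geqslant 2$, and the subtractive dynamics $r\mapsto|pq-r|$ can keep an edge in class $\geqslant 2$ indefinitely, so a single well-chosen mutation need not reveal the discrepancy. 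I would handle this by a descent on $\max\{a,b,c\}$: using the weight-preserving source and sink mutations of $Q$ together with Lemma \ref{le0}(ii) to track the seed $(\mathbf{x}',B')$, one reduces the maximal weight until a base-case pattern is reached, where the argument of the previous paragraph applies. Making this descent uniform over all three possible positions of the simple edge, and verifying that the cyclic orientation cannot imitate the additive growth of the acyclic triangle at every stage, is where the real work lies.
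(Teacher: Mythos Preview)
Your plan contains a genuine gap in precisely the case you flag as the main obstacle. The proposed descent on $\max\{a,b,c\}$ cannot work as stated: source and sink mutations of the acyclic quiver $Q$ are weight-preserving, so iterating them leaves the edge-class pattern of $Q$ unchanged, and by your own running tool (Lemma~\ref{le0}(ii) combined with Lemma~\ref{le1}) the edge-class pattern of $Q'$ after the \emph{same} word must agree with that of $Q$ at every step. Hence you never move to a different ``base-case pattern'', and since Lemma~\ref{le1} only sees the coarse trichotomy $0/1/\geqslant 2$, nothing in your toolkit detects whether the actual weights of $Q'$ have decreased. Your mutation-cyclic branch is moreover vacuous: by Lemma~\ref{le1} the cycle $Q'$ inherits a simple edge from $Q$, and a directed $3$-cycle with a weight-$1$ edge is always mutation-acyclic (one mutation at a suitable vertex adjacent to the simple edge already yields an acyclic quiver). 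So the whole argument collapses onto the descent step, which does not descend.

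The paper's proof bypasses all of this with a short explicit case analysis on the \emph{position} of the simple edge ($c=1$, $a=1$, or $b=1$), using at most two mutations in each case and comparing one specified edge. For example, when $a=1$ (hence $x=1$), the edge $\{1,2\}$ has weight $1$ in $\mu_3(Q)$ but weight $yz-1$ in $\mu_3(Q')$; class equality forces $yz-1=1$, hence $y=1$, $z=2$, and then a single further mutation $\mu_2$ produces a class mismatch on the edge $\{1,3\}$. The case $c=1$ is handled with the two-step word $\mu_2\mu_3$ plus one extra check, and $b=1$ reduces to $a=1$ by passing to opposite quivers. Your add-versus-subtract heuristic is exactly the engine behind these computations, but the argument requires choosing the mutation word \emph{relative to where the simple edge sits} and computing the resulting weights explicitly, rather than an abstract reduction.
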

\begin{proof}
Assume that $P(\mathbf{x},B)=P(\mathbf{x}',B')$, and $Q'$ is not acyclic.
Then the quiver $Q'$ is of the following form
\[\mathord{\begin{tikzpicture}[scale=1.3,baseline=0]
\node at (0,0.5) (2) {$2$};
\node at (-1,-0.5) (1) {$1$};
\node at (1,-0.5) (3) {$3$};
\path[-angle 90]
	(1) edge node [left] {$x$} (2)
	(2) edge node [right] {$y$} (3)
	(3) edge node [below] {$z$} (1);
\end{tikzpicture}}\]
with $\mathrm{min}\{x,y,z\}=1$ by Lemma \ref{le1}.

If $c=1=z$, then $x\neq y$. Otherwise the number of arrows between $1$ and $2$ in $\mu_3(Q')$ is zero, however it is $a$ in $\mu_3(Q)$.
Then  $P(\mu_3(\mathbf{x},B))\neq P(\mu_3(\mathbf{x}',B'))$ which is a contradiction by Lemma \ref{le0}(ii). We may assume that
$y>x\geqslant 1$. Then the numbers of arrows between $1$ and $3$ in $\mu_2\mu_3(Q)$ and $\mu_2\mu_3(Q')$ are $1$ and $y(y-x)-1$, respectively.
Thus $y(y-x)-1=1$ and this implies $x=1,y=2$. However the numbers of arrows between $2$ and $3$ in $\mu_1(Q)$ and $\mu_1(Q')$ are $b(\geqslant 2)$
and $1$, which is a contradiction. The case $x>y(\geqslant 1)$ is similar. Thus $z\geqslant 2$.

If $a=1=x$, the numbers of arrows between $1$ and $2$ in $\mu_3(Q)$ and $\mu_3(Q')$ are $1$ and $yz-1$, respectively. Thus $yz-1=1$ and this
implies $y=1,z=2$ since $z\geqslant 2$. In this case, the numbers of arrows between $1$ and $3$ in $\mu_2(Q)$ and $\mu_2(Q')$ are
$y+1(\geqslant 2)$ and $1$, which is a contradiction. Thus $x\geqslant 2$.

If $b=1=y$, the same discussion on $Q^{op}$ and $Q'^{op}$ as in the case $a=1=x$ implies $y\geqslant 2$, which is a contradiction. Therefore
$Q'$ cannot be cyclic.
 \end{proof}

The following two lemmas and Corollary \ref{pr} are the improvement of Lemma 6.9 in \cite{KP}.
\begin{Lemma}\label{le3}
Let $(\mathbf{x},B)$ and $(\mathbf{x}',B')$ be two labeled seeds of rank $3$. Suppose that
the quiver $Q$ of $B$ is one of the following  forms
\[\begin{tikzpicture}[scale=1.3]
\node at (0,0) (11) {$1$};
\node at (1,0) (21) {$2$};
\node at (2,0) (31) {$3$};
\node at (2.5,0) (or) { and };
\node at (3,0) (12) {$1$};
\node at (4,0) (22) {$2$};
\node at (5,0) (32) {$3$};
\path[-angle 90]
	(11) edge node [above] {$a$} (21)
	(21) edge node [above] {$b$} (31)
	(22) edge node [above] {$a$} (12)
	(32) edge node [above] {$b$} (22);
\end{tikzpicture}\]
and the quiver $Q'$ of $B'$ is one of the following forms:
\[\begin{tikzpicture}[scale=1.3]
\node at (0,0) (11) {$1$};
\node at (1,0) (21) {$2$};
\node at (2,0) (31) {$3$};
\node at (2.5,0) (or) { and };
\node at (3,0) (12) {$1$};
\node at (4,0) (22) {$2$};
\node at (5,0) (32) {$3$};
\path[-angle 90]
	(11) edge node [above] {$x$} (21)
	(31) edge node [above] {$y$} (21)
	(22) edge node [above] {$x$} (12)
	(22) edge node [above] {$y$} (32);
\end{tikzpicture}\]
where $a, b, x, y\in \mathbb{Z}_{>0}$. Then $P(\mathbf{x},B)\neq P(\mathbf{x}',B')$.
\end{Lemma}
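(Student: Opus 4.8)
The plan is to distinguish the two seeds by mutating once at the central vertex $2$ and then examining whether an edge between $1$ and $3$ is created. The essential structural difference between $Q$ and $Q'$ is the local behaviour at vertex $2$: in both admissible forms of $Q$ the vertex $2$ is a \emph{pass-through} vertex carrying a $2$-path (either $1\to 2\to 3$ or $3\to 2\to 1$), whereas in both forms of $Q'$ it is a sink or a source, so no $2$-path runs through it. Mutation at a pass-through vertex produces a new arrow between its two neighbours, while mutation at a sink or source does not; this is exactly the invariant that will separate the two period sets.

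First I would compute $\mu_2(B)$ and $\mu_2(B')$ directly from the matrix mutation rule, focusing on the $(1,3)$-entry $b_{13}'=b_{13}+\mathrm{sgn}(b_{12})[b_{12}b_{23}]_+$. For either orientation of $Q$ one has $b_{12}b_{23}>0$ (the two factors along the $2$-path share a sign), so $|b_{13}'|=ab\geqslant 1$; hence $\mu_2(Q)$ has an edge between $1$ and $3$. For either orientation of $Q'$ one has $b_{12}b_{23}<0$ (a sink or source forces opposite signs), so the bracket vanishes and $b_{13}'=0$; hence $\mu_2(Q')$ has no edge between $1$ and $3$. Thus mutation at $2$ creates a $1$--$3$ edge of weight $ab\geqslant 1$ in the first case and leaves the vertices $1,3$ disconnected in the second.

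Now suppose toward a contradiction that $P(\mathbf{x},B)=P(\mathbf{x}',B')$. Applying Lemma \ref{le0}(ii) to the one-term sequence $\mathbf{i}=(2)$ gives $P(\mu_2(\mathbf{x},B))=P(\mu_2(\mathbf{x}',B'))$. But Lemma \ref{le1} says that equality of period sets forces the absolute value of each off-diagonal entry to lie in the same class among $\{0\}$, $\{1\}$, and $\{\,\geqslant 2\,\}$. For the $(1,3)$-entry we have $|b_{13}|=ab\geqslant 1$ in $\mu_2(B)$ but $|b_{13}|=0$ in $\mu_2(B')$, so these classes disagree, a contradiction. Therefore $P(\mathbf{x},B)\neq P(\mathbf{x}',B')$.

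The argument is essentially a single sign computation, so I do not expect a genuine obstacle; the only care required is bookkeeping over the four combinations of orientations of $Q$ and $Q'$. Each of these reduces to the same observation, namely that a $2$-path gives $b_{12}b_{23}>0$ and hence a newly created edge, while a source or sink gives $b_{12}b_{23}<0$ and hence none. One should also note that the hypothesis $\min\{a,b\}\geqslant 1$ guarantees $ab\geqslant 1$, which is all that is needed to place $|b_{13}|$ outside the class $\{0\}$ after mutation.
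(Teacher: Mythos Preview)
Your proposal is correct and follows exactly the same approach as the paper: mutate at the central vertex $2$, observe that the resulting weight between $1$ and $3$ is $ab\geqslant 1$ for $Q$ but $0$ for $Q'$, and then invoke Lemma~\ref{le1} together with Lemma~\ref{le0}(ii) to conclude. Your write-up simply supplies more of the underlying computation than the paper's terse one-line version.
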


\begin{proof}
The weights of arrows between $1$ and $3$ in $\mu_2(Q)$ and $\mu_2(Q')$ are $ab$ and $0$, respectively. Therefore
$P(\mu_2(\mathbf{x},B))\neq P(\mu_2(\mathbf{x}',B'))$ by Lemma \ref{le1}, which also implies
$P(\mathbf{x},B)\neq P(\mathbf{x}',B')$ by Lemma \ref{le0}(ii).
\end{proof}

\begin{Lemma}\label{le4}
Let $(\mathbf{x},B)$ and $(\mathbf{x}',B')$ be two labeled seeds of rank $3$. Suppose that
the quiver $Q$ of $B$ is one of the following  forms:
\[\mathord{\begin{tikzpicture}[scale=1.3,baseline=0]
\node at (0,0.3) (12) {$2$};
\node at (-1,-0.3) (11) {$1$};
\node at (1,-0.3) (13) {$3$,};
\node at (2.5,0.3) (22) {$2$};
\node at (1.5,-0.3) (21) {$1$};
\node at (3.5,-0.3) (23) {$3$,};
\node at (5,0.3) (32) {$2$};
\node at (4,-0.3) (31) {$1$};
\node at (6,-0.3) (33) {$3$,};
\node at (6.5,0) (or) {and};
\node at (8,0.3) (42) {$2$};
\node at (7,-0.3) (41) {$1$};
\node at (9,-0.3) (43) {$3$,};
\path[-angle 90]
	(11) edge node [left] {$a$} (12)
	(12) edge node [right] {$b$} (13)
	(11) edge node [below] {$c$} (13)
	(21) edge node [left] {$a$} (22)
	(22) edge node [right] {$b$} (23)
	(23) edge node [below] {$c$} (21)
	(32) edge node [left] {$a$} (31)
	(33) edge node [right] {$b$} (32)
	(33) edge node [below] {$c$} (31)
	(42) edge node [left] {$a$} (41)
	(43) edge node [right] {$b$} (42)
	(41) edge node [below] {$c$} (43);
\end{tikzpicture}}\]
and quiver $Q'$ of $B'$ is one of the following forms:
\[\mathord{\begin{tikzpicture}[scale=1.3,baseline=0]
\node at (0,0.3) (12) {$2$};
\node at (-1,-0.3) (11) {$1$};
\node at (1,-0.3) (13) {$3$,};
\node at (2.5,0.3) (22) {$2$};
\node at (1.5,-0.3) (21) {$1$};
\node at (3.5,-0.3) (23) {$3$,};
\node at (5,0.3) (32) {$2$};
\node at (4,-0.3) (31) {$1$};
\node at (6,-0.3) (33) {$3$,};
\node at (6.5,0) (or) {and};
\node at (8,0.3) (42) {$2$};
\node at (7,-0.3) (41) {$1$};
\node at (9,-0.3) (43) {$3$,};
\path[-angle 90]
	(11) edge node [left] {$x$} (12)
	(13) edge node [right] {$y$} (12)
	(11) edge node [below] {$z$} (13)
	(21) edge node [left] {$x$} (22)
	(23) edge node [right] {$y$} (22)
	(23) edge node [below] {$z$} (21)
	(32) edge node [left] {$x$} (31)
	(32) edge node [right] {$y$} (33)
	(33) edge node [below] {$z$} (31)
	(42) edge node [left] {$x$} (41)
	(42) edge node [right] {$y$} (43)
	(41) edge node [below] {$z$} (43);
\end{tikzpicture}}\]
where $a,b,c,x,y,z\in \mathbb{Z}_{>0}$, and $\mathrm{min}\{a,b,c\}=1$. Then $P(\mathbf{x},B)\neq P(\mathbf{x}',B')$.
\end{Lemma}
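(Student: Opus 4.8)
The plan is to argue by contradiction: assume $P(\mathbf{x},B)=P(\mathbf{x}',B')$ and track how the coarse edge data recorded by Lemma \ref{le1} evolves under mutation, using Lemma \ref{le0}(ii) to transport the equality of period sets along any mutation sequence. The decisive structural difference between the two lists of quivers is that in every form of $Q'$ the vertex $2$ is a source or a sink, whereas in every form of $Q$ the vertex $2$ lies on a directed $2$-path ($1\to 2\to 3$ or $3\to 2\to 1$). First I record the consequence of $\min\{a,b,c\}=1$: the edge of weight $1$ joins some pair $\{i,j\}$, so Lemma \ref{le1} forces $|b'_{ij}|=1$, whence $\min\{x,y,z\}=1$. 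Since $x,y,z\in\mathbb{Z}_{>0}$, every form of $Q'$ is thus an acyclic triangle of minimum weight $1$; after a simultaneous relabelling of its three vertices it satisfies the hypotheses of Lemma \ref{le2}.

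I then split on the orientation of $Q$. If $Q$ is cyclic (the second and fourth forms), I apply Lemma \ref{le2} with $(\mathbf{x}',B')$ in the role of the acyclic, minimum-weight-$1$ seed; equality of the period sets then forces the quiver of the companion seed, namely $Q$, to be acyclic, contradicting that $Q$ is a $3$-cycle. This disposes of both cyclic forms of $Q$ simultaneously, and for every form of $Q'$.

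If $Q$ is acyclic (the first and third forms) both quivers are acyclic and Lemma \ref{le2} gives nothing directly, so I first mutate at the vertex $2$; by Lemma \ref{le0}(ii) we still have $P(\mu_2(\mathbf{x},B))=P(\mu_2(\mathbf{x}',B'))$. Because $2$ is a source or sink of $Q'$, the mutation $\mu_2$ only reverses the arrows incident with $2$ and leaves the weight between $1$ and $3$ equal to $z$, so $\mu_2(Q')$ is again an acyclic triangle with weights $\{x,y,z\}$, still of minimum weight $1$. Because $2$ is the interior vertex of the $2$-path of $Q$ and the standing edge between $1$ and $3$ points the same way as the composite of that $2$-path, the rule $b'_{13}=b_{13}+\mathrm{sgn}(b_{12})[b_{12}b_{23}]_+$ produces a reinforced edge of weight $ab+c\ge 2$, so $\mu_2(Q)$ is a genuine $3$-cycle with weights $a,b,ab+c$. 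Applying Lemma \ref{le2} once more, now with $\mu_2(\mathbf{x}',B')$ as the acyclic minimum-weight-$1$ seed, forces $\mu_2(Q)$ to be acyclic, the desired contradiction.

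The step I expect to require the most care is this reinforcement bookkeeping: one must confirm, uniformly for the first and third forms of $Q$, that the orientation of the $2$-path agrees with that of the existing edge between $1$ and $3$, so that the new weight is $ab+c$ rather than $|ab-c|$, which could vanish or reverse and would destroy the cyclicity I rely on. I would verify this straight from the matrix mutation rule for $b'_{13}$, checking that $b_{12}b_{23}>0$ exactly in the two acyclic forms. A secondary, cosmetic point is that Lemma \ref{le2} is phrased for one distinguished acyclic orientation, so each invocation is preceded by a simultaneous relabelling of the vertices of both seeds; this is harmless, since relabelling carries periods to periods and preserves acyclicity.
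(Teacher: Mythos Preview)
Your proof is correct and, for the acyclic forms of $Q$, takes a genuinely different route from the paper.

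For the two cyclic forms of $Q$ your argument coincides with the paper's reduction: both observe that $Q'$ is always acyclic with minimum weight $1$ (by Lemma~\ref{le1}) and then invoke Lemma~\ref{le2} with the roles swapped to force $Q$ to be acyclic, a contradiction. The paper phrases this as ``by Lemma~\ref{le0}(i) and Lemma~\ref{le2} it is enough to consider'' the first acyclic form of $Q$, but the content is the same.

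The difference is in handling the acyclic forms. The paper carries out a three-way case split on which of $a,b,c$ equals $1$ (forcing the corresponding one of $x,y,z$ to equal $1$ by Lemma~\ref{le1}), and for each case exhibits an explicit mutation sequence---sometimes of length three, e.g.\ $\mu_3\mu_1\mu_2$ or $\mu_1\mu_3\mu_2$---after which the two quivers have visibly different edge multiplicities at a specified pair, contradicting Lemma~\ref{le1}. Your argument avoids this case analysis entirely: a single mutation at vertex $2$ turns $Q$ into a $3$-cycle (because the $1$--$3$ edge is reinforced to weight $ab+c\ge 2$) while leaving $Q'$ acyclic with the same weight set (because $2$ is a source or sink there), and then you feed this back into Lemma~\ref{le2}. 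This is cleaner and uniform in the weight $1$ position; the paper's approach, on the other hand, never needs to relabel vertices to fit the exact orientation stated in Lemma~\ref{le2}, since it works directly with edge multiplicities via Lemma~\ref{le1}. Both arguments are short; yours leverages Lemma~\ref{le2} more fully, while the paper's is more self-contained at the level of explicit weight computations.
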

\begin{proof}
Assume that $\mathrm{min}\{x,y,z\} = 1$, $x=1 (y=1,z=1,\mathrm{resp}.)$ if and only if $a=1 (b=1,c=1,\mathrm{resp}.)$,
and $x>1 (y>1,z>1,\mathrm{resp}.)$ if and only if $a>1 (b>1,c>1,\mathrm{resp}.)$. Otherwise it is obvious that
$P(\mathbf{x},B)\neq P(\mathbf{x}',B')$ by Lemma \ref{le1}. By Lemma \ref{le0}(i) and Lemma \ref{le2},
it is enough to consider that $Q$ is of the following form
\[\mathord{\begin{tikzpicture}[scale=1.3,baseline=0]
\node at (0,0.3) (12) {$2$};
\node at (-1,-0.3) (11) {$1$};
\node at (1,-0.3) (13) {$3$,};
\path[-angle 90]
	(11) edge node [left] {$a$} (12)
	(12) edge node [right] {$b$} (13)
	(11) edge node [below] {$c$} (13);
\end{tikzpicture}}\]
and $Q'$ is one of the following forms
\[\mathord{\begin{tikzpicture}[scale=1.3,baseline=0]
\node at (0,0.3) (12) {$2$};
\node at (-1,-0.3) (11) {$1$};
\node at (1,-0.3) (13) {$3$,};
\node at (1.5,0) (and) {and};
\node at (3,0.3) (22) {$2$};
\node at (2,-0.3) (21) {$1$};
\node at (4,-0.3) (23) {$3$,};
\path[-angle 90]
	(11) edge node [left] {$x$} (12)
	(13) edge node [right] {$y$} (12)
	(11) edge node [below] {$z$} (13)
	(21) edge node [left] {$x$} (22)
	(23) edge node [right] {$y$} (22)
	(23) edge node [below] {$z$} (21);
\end{tikzpicture}}\]

If $c=z=1$, the weights of arrows between $1$ and $3$ in $\mu_2(Q)$ and $\mu_2(Q')$ are $ab+1$ and $1$, respectively.
Then $P(\mu_2(\mathbf{x},B))\neq P(\mu_2(\mathbf{x}',B'))$ and thus $P(\mathbf{x},B)\neq P(\mathbf{x}',B')$.

Assume that
$c>1, z>1$ in the following. We denote by $Q_1$, $Q_2$, and $Q_3$ the above three quivers in order for simplicity. Note that $Q_1=Q$ and
$Q_2$ and $Q_3$ are the two possible forms of $Q'$.

If $a=x=1$, the weights of arrows between $1$ and $2$ in $\mu_3(Q_1)$ and $\mu_3(Q_2)$ are $1$ and $yz+1$, respectively.
This implies $P(\mathbf{x},B)\neq P(\mathbf{x}',B')$ when $Q'=Q_2$. Suppose that $Q'=Q_3$. Then the weights of arrows between
$1$ and $2$ are $c(b+c)-1$ and $1$ in $\mu_3\mu_1\mu_2(Q)$ and $\mu_3\mu_1\mu_2(Q')$, respectively. Since $c>1,b\geqslant 1$,
thus $c(b+c)-1>1$, which implies $P(\mathbf{x},B)\neq P(\mathbf{x}',B')$.

Finally assume that $a>1,x>1$, and $b=y=1$. The weights of arrows between $2$ and $3$ in $\mu_1(Q_1)$ and $\mu_1(Q_3)$
are $1$ and $xz+1$, respectively. This $P(\mathbf{x},B)\neq P(\mathbf{x}',B')$ when $Q'=Q_3$. Suppose that $Q'=Q_2$.
Then the weights of arrows between
$2$ and $3$ are $c(a+c)-1$ and $1$ in $\mu_1\mu_3\mu_2(Q)$ and $\mu_1\mu_3\mu_2(Q')$, respectively. Since $c>1,a>1$,
thus $c(a+c)-1>1$, which implies $P(\mathbf{x},B)\neq P(\mathbf{x}',B')$.
\end{proof}

\begin{Remark}
In Lemmas \ref{le0}-\ref{le4}, two labeled seeds $(\mathbf{x},B)$ and $(\mathbf{x}',B')$ are not assumed
to be mutation equivalent in our settings.
\end{Remark}

For the corresponding quiver $Q=Q(B)$ of $B$ with only three vertices $i, j, k$, we call a vertex $i$ an \textbf{inflexion} if $b_{ji}b_{ik}>0$.
As a conclusion of Lemma \ref{le3} and \ref{le4}, we have the following corollary.

\begin{Corollary}\label{pr}
Let $(\mathbf{\tilde{x}},\tilde{B})$ and $(\mathbf{\tilde{x}}',\tilde{B}')$ be two labeled seeds of rank $n$
satisfying that $P(\mathbf{\tilde{x}},\tilde{B})= P(\mathbf{\tilde{x}}',\tilde{B}')$.
Let $(\mathbf{x},B)$ and $(\mathbf{x}',B')$ be the corresponding full subseed of
 $(\mathbf{\tilde{x}},\tilde{B})$ and $(\mathbf{\tilde{x}}',\tilde{B}')$ indexed by $I=\{i,j,k\}$ with $1\leq i<j<k\leq n$ respectively.
Let $r= min\{|b_{ij}|,|b_{jk}|,|b_{ik}|\}$. If $r\in \{0,1\}$, then a vertex $v$ is an inflexion  in $Q$ if and only if $v$ is an inflexion
in $Q'$ for any $v\in Q_0=Q'_0=\{i,j,k\}$, where $Q$ and $Q'$ are the corresponding quiver of $B$ and $B'$ respectively.

In particular, if $min\{|\tilde{b}_{ij}|,|\tilde{b}_{jk}|,|\tilde{b}_{ik}|\}\in \{0, 1\}$  for any $1\leq i<j<k\leq n$
 such that the full subquiver determined by $\{i,j,k\}$ of $\tilde{Q}$ is connected,
 then either the signs of $\tilde{b}_{xy}$ and $\tilde{b}_{xy}'$ are the same
for all $x,y\in [1,n]$, or the signs of $\tilde{b}_{ij}$ and $\tilde{b}_{ij}'$ are opposite for all $i,j\in [1,n]$.
\end{Corollary}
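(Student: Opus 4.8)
The plan is to deduce the statement from the three preceding rank‑$3$ lemmas, after first cutting down to full subseeds and then using the fact that being an inflexion at a vertex $v$ depends only on the two edges incident to $v$, i.e. only on the sign of $\tilde b_{uv}\tilde b_{vw}$ where $u,w$ are the other two indices of the triple.

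\textbf{Step 1 (reduction to the subseed).} First I would show that the full subseeds inherit equal period sets, $P(\mathbf{x},B)=P(\mathbf{x}',B')$. Since $(\mathbf{x},B)$ has index set $I=\{i,j,k\}$, every element of $P(\mathbf{x},B)$ is an $I$-sequence; by the Restriction and Extension Theorem \ref{RE} an $I$-sequence is a period of the subseed exactly when it is a period of the ambient seed, so $P(\mathbf{x},B)=P(\tilde{\mathbf{x}},\tilde B)\cap\{I\text{-sequences}\}$, and likewise for the primed data. Intersecting the hypothesis $P(\tilde{\mathbf{x}},\tilde B)=P(\tilde{\mathbf{x}}',\tilde B')$ with the $I$-sequences then yields $P(\mathbf{x},B)=P(\mathbf{x}',B')$. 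Relabelling $I$ as $\{1,2,3\}$ puts me in the situation of Lemmas \ref{le1}, \ref{le3}, \ref{le4}; by Lemma \ref{le1} the quivers $Q,Q'$ share the same underlying graph and the same simple/multiple pattern, so $r=\min\{|b_{ij}|,|b_{jk}|,|b_{ik}|\}$ is the same whether read off $B$ or $B'$.

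\textbf{Step 2 (first assertion).} I split on $r$. If $r=0$ an edge is absent: when two or three are absent no vertex has two incident edges, so neither $Q$ nor $Q'$ has an inflexion; when exactly one is absent, relabelling places it between $1$ and $3$, leaving only vertex $2$ able to be an inflexion, and Lemma \ref{le3} says a disagreement of the inflexion status at $2$ would force $P(\mathbf{x},B)\neq P(\mathbf{x}',B')$ — applying it with $(Q,Q')$ in either order (legitimate because the equality of period sets is symmetric) forces agreement. If $r=1$ all three edges are present with least weight $1$; fixing $v$, relabelling it to $2$, and noting that the four admissible shapes of $Q$ in Lemma \ref{le4} are exactly the orientations making $2$ an inflexion while the four shapes of $Q'$ are exactly those making $2$ a source or a sink, Lemma \ref{le4} again contradicts $P(\mathbf{x},B)=P(\mathbf{x}',B')$ whenever the status at $v$ differs. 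Running over $v\in\{1,2,3\}$ proves the first claim.

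\textbf{Step 3 (the global sign statement).} For each edge $e=\{x,y\}$ of the common graph set $\epsilon(e)=+1$ if $\mathrm{sgn}(\tilde b_{xy})=\mathrm{sgn}(\tilde b'_{xy})$ and $\epsilon(e)=-1$ otherwise (well defined since $\tilde b_{yx}=-\tilde b_{xy}$); I claim $\epsilon$ is constant. Given two edges $\{x,y\},\{y,z\}$ meeting at $y$, the triple $\{x,y,z\}$ is connected, hence $\min\{|\tilde b_{xy}|,|\tilde b_{yz}|,|\tilde b_{xz}|\}\in\{0,1\}$ by hypothesis and the first assertion applies. Because inflexion at $y$ is equivalent to $\mathrm{sgn}(\tilde b_{xy})=\mathrm{sgn}(\tilde b_{yz})$, its preservation reads
\[ \mathrm{sgn}(\tilde b_{xy})\,\mathrm{sgn}(\tilde b_{yz})=\mathrm{sgn}(\tilde b'_{xy})\,\mathrm{sgn}(\tilde b'_{yz}), \]
and multiplying the two sides gives $\epsilon(\{x,y\})\,\epsilon(\{y,z\})=1$, i.e. $\epsilon(\{x,y\})=\epsilon(\{y,z\})$. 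Since $\tilde B$ is indecomposable the graph is connected, so its line graph is connected and $\epsilon$ is constant: $\epsilon\equiv+1$ is the ``all signs agree'' alternative and $\epsilon\equiv-1$ the ``all signs opposite'' alternative.

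\textbf{Main obstacle.} The real content is already supplied by Lemmas \ref{le3} and \ref{le4}, so the difficulty is organisational. The two points needing care are: matching an arbitrary orientation of a $3$-vertex subquiver to the specific normal forms of those lemmas and invoking the $Q\leftrightarrow Q'$ symmetry correctly (Step 2), and, for the global statement, the sign bookkeeping together with the observation that any two incident edges lie in a connected triple satisfying the hypothesis, which is what lets the local identity $\epsilon(\{x,y\})=\epsilon(\{y,z\})$ be propagated through the connected line graph (Step 3). I also expect the index handling in the Restriction/Extension reduction of Step 1 to require a little attention to keep the $I$-sequences straight.
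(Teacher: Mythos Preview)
Your proposal is correct and follows essentially the same route as the paper: reduce to the rank-$3$ subseeds via Theorem~\ref{RE}, invoke Lemmas~\ref{le3} and~\ref{le4} (after relabelling) to match inflexions vertex by vertex, and then propagate the local sign agreement globally using connectivity. Your Step~1 is in fact slightly more careful than the paper, which cites only the restriction half of Theorem~\ref{RE} whereas both halves are needed to identify $P(\mathbf{x},B)$ with the $I$-sequences in $P(\tilde{\mathbf{x}},\tilde B)$; and your Step~3, phrased via the edge function $\epsilon$ and line-graph connectivity, makes explicit the gluing that the paper leaves to the phrase ``by the indecomposability of $B$ and $B'$''.
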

\begin{proof}
It follows from the restriction part of Theorem \ref{RE} that $P(\mathbf{x},B)=P(\mathbf{x}',B')$.

We may, without loss of generality, assume that $i=1,j=2,k=3$ and $Q$ is connected.
If $r=0$, by Lemma 5.7, the vertex $2$ is an inflexion  in $Q$  if and only if $2$ is an inflexion  in $Q'$. For $v=1,3$, it also holds
if we change the indices in Lemma 5.7.

If $r=1$, by Lemma 5.8, the vertex $2$ is an inflexion  in $Q$  if and only if $2$ is an inflexion  in $Q'$. For $v=1,3$, it also holds
if we change the indices in Lemma 5.8.

In particular, the whole orientation of a quiver with three vertices is determined by the orientation of one arrow and inflexions of all vertices,
thus it follows easily that either $Q$ and $Q'$ have the same orientations or $Q$ and $Q'$ have opposite orientations.
By the indecomposability of $B$ and $B'$, we have that either the signs of $\tilde{b}_{xy}$ and $\tilde{b}_{xy}'$ are the same
for all $x,y\in [1,n]$, or the signs of $\tilde{b}_{xy}$ and $\tilde{b}_{xy}'$ are the opposite for all $x,y\in [1,n]$.
\end{proof}

\begin{Proposition}[\cite{FWZ,CK}]\label{acy}
Let $Q$ and $Q'$ be two acyclic quivers which are mutation equivalent to each other.
Then $Q$ can be transformed into a quiver isomorphic to $Q'$ via a sequence of mutations
at sources and sinks. Therefore all acyclic quivers in a given mutation class have the
same underlying undirected graph.

\end{Proposition}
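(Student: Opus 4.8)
The plan is to isolate the one genuinely hard point and to obtain the ``common underlying graph'' assertion as a free consequence. First I would dispose of the easy direction: a mutation at a source or a sink of an acyclic quiver produces again an acyclic quiver with the \emph{same} underlying graph. Indeed, if $k$ is a source there are no arrows ending at $k$, hence no $2$-path $i\to k\to j$, so clause (1) in the definition of quiver mutation creates no new arrow and clause (3) removes none; the mutation only reverses the arrows issuing from $k$, which neither alters any arrow multiplicity nor produces an oriented cycle through $k$. The argument at a sink is dual. Consequently, \emph{once} the first assertion is proved, the second follows at once: a chain of source/sink mutations carrying $Q$ to a quiver isomorphic to $Q'$ preserves the underlying undirected graph at every step, so $Q$ and $Q'$ share it. By the standing assumption that exchange matrices are indecomposable, $Q$ and $Q'$ are connected, and mutation never links previously disconnected vertices, so there is no loss in working with connected quivers throughout.

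The substance is thus to connect two mutation-equivalent acyclic quivers by source and sink mutations (up to isomorphism), and here I would pass to representation theory. I would attach to the acyclic quiver $Q$ the hereditary path algebra $kQ$ and its bounded derived category $D^b(kQ)$; a mutation at a source or a sink is realized by a Bernstein--Gelfand--Ponomarev reflection functor and hence induces a triangle equivalence $D^b(kQ)\simeq D^b(kQ')$. The cluster category $\mathcal{C}_Q=D^b(kQ)/\langle\tau^{-1}[1]\rangle$ is therefore an invariant of this process, and the seeds of the mutation class are recorded by the cluster-tilting objects of $\mathcal{C}_Q$, the quiver of a seed being the Gabriel quiver of the corresponding endomorphism algebra.

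The key step is the following reconstruction, which I would invoke from the theory of Caldero and Keller: a cluster-tilting object of $\mathcal{C}_Q$ has \emph{acyclic} Gabriel quiver precisely when it is induced from a tilting module over a hereditary algebra, and in that case this algebra is $kQ'$ with $\mathcal{C}_{Q'}\simeq\mathcal{C}_Q$; unwinding the reconstruction yields a triangle equivalence $D^b(kQ)\simeq D^b(kQ')$ lifting the given mutation-equivalence. At this point Happel's classification of derived-equivalent hereditary algebras applies: for connected quivers without oriented cycles, $D^b(kQ)\simeq D^b(kQ')$ forces $Q$ and $Q'$ to lie in a common orbit under reflection functors. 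Translating that orbit back into quiver combinatorics is exactly a sequence of mutations at sources and sinks taking $Q$ to a quiver isomorphic to $Q'$, which is what we want; the equality of underlying graphs then follows from the first paragraph.

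The main obstacle is precisely this lifting together with Happel's transitivity, that is, the assertion that \emph{every} derived equivalence between the two hereditary algebras is realizable through reflection functors. This is where hereditary representation theory is indispensable and where the restriction to acyclic seeds is essential: the analogous statement fails for arbitrary (cyclic) seeds, and its proof rests on the fact that the shape of the transjective (postprojective--preinjective) component of the Auslander--Reiten quiver, together with the symmetrized Euler form, is a derived invariant that pins down the underlying graph. Once this transitivity is secured, the remaining bookkeeping---checking that each reflection functor corresponds to a source/sink mutation and tracking vertex labels up to isomorphism---is routine.
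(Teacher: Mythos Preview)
The paper does not supply its own proof of this proposition; it is quoted as a known result with references to \cite{FWZ,CK}. Your sketch is essentially the Caldero--Keller argument from \cite{CK}: realise the mutation class inside the cluster category $\mathcal{C}_Q$, identify acyclic seeds with cluster-tilting objects arising from tilting modules over hereditary algebras, and then invoke Happel's theorem that derived-equivalent connected hereditary path algebras are linked by BGP reflection functors, i.e.\ source/sink mutations. The elementary preamble (that a source/sink mutation creates no new $2$-paths and hence preserves both acyclicity and the underlying graph) is correct and is exactly how one deduces the ``same underlying graph'' clause from the first assertion. So there is nothing to compare against in the paper itself, and your route coincides with that of the cited source \cite{CK}.
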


Let us recall that if $B$ is a skew-symmetric matrix, then
\[m(B)= \mathrm{inf}\{v(B')|B'\, \mbox{is mutation equivalent to}\, B \},\]
where $v(B')=\mathrm{max}\{b'_{ij}|i,j\in [1,n]\}$.

\begin{Theorem}\label{main1}
Let $(\mathbf{x},B)$ be a labeled seed of rank $n$, $S$ be its labeled mutation class, and $\mathcal{A}$ be its corresponding cluster algebra.
If the exchange matrix $B$ satisfies one of the following conditions
\begin{enumerate}
\item[(i)] $m(B)=1$;
\item[(ii)] $Q(B)$ is acyclic, $v(B)=2$ and the underlying graph of $Q(B)$ has no $3$-cycles;
\item[(iii)] $Q(B)$ is acyclic, $v(B)=2$ and every $3$-cycle in the underlying graph of $Q(B)$ has at least one simple edge,
\end{enumerate}
then $W= \mathrm{Aut}_{M_n}(S)$. Consequently, in these cases, $\mathrm{Aut}\mathcal{A}\cong \mathrm{Aut}_{M_n}(S)$.
\end{Theorem}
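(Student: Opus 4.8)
The plan is to prove the nontrivial inclusion $\mathrm{Aut}_{M_n}(S)\subseteq W$, since $W\subseteq \mathrm{Aut}_{M_n}(S)$ holds by definition; once $W=\mathrm{Aut}_{M_n}(S)$ is established, the isomorphism $\mathrm{Aut}\mathcal{A}\cong \mathrm{Aut}_{M_n}(S)$ follows at once from Theorem \ref{KP1}. So fix $f\in \mathrm{Aut}_{M_n}(S)$ and write $(\mathbf{x}',B')=f(\mathbf{x},B)$; the goal is to show $B'=B$ or $B'=-B$. The conceptually central first step is that $f$ preserves periods: if $\mathbf{j}$ is a period of a seed $(\mathbf{y},C)\in S$, then $\mu_{\mathbf{j}}\in \langle\mu_1,\dots,\mu_n\rangle\leq M_n$ fixes $(\mathbf{y},C)$, so by commutativity $\mu_{\mathbf{j}}$ fixes $f(\mathbf{y},C)$ as well; running the same argument for $f^{-1}$ gives $P(\mathbf{y},C)=P(f(\mathbf{y},C))$ for every seed of $S$. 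In particular $P(\mathbf{x},B)=P(\mathbf{x}',B')$, which is exactly the input required by Lemmas \ref{le1}, \ref{le2} and Corollary \ref{pr}.

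For case (i), since $v$ takes values in the positive integers the infimum defining $m(B)=1$ is attained, so I first choose $g\in M_n$ carrying $(\mathbf{x},B)$ to a seed $(\mathbf{y},C)=g(\mathbf{x},B)$ with $v(C)=1$, whence every entry of $C$ lies in $\{-1,0,1\}$. Writing $(\mathbf{y}',C')=f(\mathbf{y},C)=g(\mathbf{x}',B')$ and using $P(\mathbf{y},C)=P(\mathbf{y}',C')$, Lemma \ref{le1} forces $|c'_{ij}|=|c_{ij}|$ for all $i,j$, and since every $3$-vertex subquiver of $Q(C)$ then has minimal edge weight at most $1$, Corollary \ref{pr} forces the signs of $C'$ to agree with those of $C$ everywhere or to be reversed everywhere; hence $C'=C$ or $C'=-C$. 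I then transport the conclusion back: the exchange matrix of $g(\mathbf{x},B)$ depends only on $B$ and $g$ through a map $\Phi_g$ which is a composition of matrix mutations and a permutation, hence bijective and commuting with $B\mapsto -B$; from $\Phi_g(B)=C$ and $\Phi_g(B')=C'=\pm C$ I conclude $B'=\pm B$, so $f\in W$.

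For cases (ii) and (iii) I apply Corollary \ref{pr} directly to $(\mathbf{x},B)$ and $(\mathbf{x}',B')$. Its hypothesis holds: in case (ii) every connected $3$-vertex subquiver is a path, so its minimal edge weight is $0$, while in case (iii) a triangular subquiver contains a simple edge of weight $1$ and any path subquiver again contributes a zero weight. Thus the signs of $B'$ either agree with those of $B$ at every pair or are opposite at every pair, so $Q(B')$ is $Q(B)$ with all arrows kept or all arrows reversed; as $Q(B)$ is acyclic, $Q(B')$ is acyclic in either case. Since $B'$ is mutation equivalent to $B$ and both quivers are acyclic, Proposition \ref{acy} shows they have the same underlying weighted graph, so $v(B')=v(B)=2$. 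The magnitudes now close up: Lemma \ref{le1} gives $|b'_{ij}|=|b_{ij}|$ whenever $|b_{ij}|\in\{0,1\}$, and when $|b_{ij}|=2$ it gives $|b'_{ij}|\geq 2$, which together with $|b'_{ij}|\leq v(B')=2$ yields $|b'_{ij}|=2$. Combining equal magnitudes with the uniform sign behaviour gives $B'=B$ or $B'=-B$, i.e. $f\in W$.

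The step I expect to demand the most care is the upper magnitude control in cases (ii) and (iii): Lemma \ref{le1} only separates $|b'_{ij}|\geq 2$ from $|b'_{ij}|\leq 1$ and never bounds the large entries from above, so I must rule out the possibility that a weight-$2$ arrow of $B$ becomes a weight-$3$-or-more arrow of $B'$. The mechanism that excludes this is the acyclicity of $Q(B')$ propagated through Corollary \ref{pr} together with Proposition \ref{acy}, which pins $v(B')$ at $2$; the remaining tasks are routine, namely checking that the hypothesis of Corollary \ref{pr} is met for each of the three listed configurations and confirming that $\Phi_g$ commutes with negation in case (i).
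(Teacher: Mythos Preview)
Your proposal is correct and follows essentially the same strategy as the paper: establish $P(\mathbf{x},B)=P(\mathbf{x}',B')$ from $M_n$-equivariance, then combine Lemma \ref{le1}, Corollary \ref{pr}, and Proposition \ref{acy} to force $B'=\pm B$. Your treatment of case (iii) is even a little cleaner than the paper's, since you deduce the acyclicity of $Q(B')$ directly from the sign uniformity supplied by Corollary \ref{pr}, whereas the paper first argues acyclicity separately via Lemmas \ref{le2} and \ref{le3} before invoking Corollary \ref{pr}; your explicit transport via $\Phi_g$ in case (i) is just a spelled-out version of the paper's ``without loss of generality $v(B)=1$''.
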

\begin{proof}
Suppose that $f\in \mathrm{Aut}_{M_n}(S)$
and let $(\mathbf{x}',B')=f(\mathbf{x},B)$.  In order to prove $W\supseteq Aut_{M_n}(S)$, it is enough to prove that $B'=B$ or $B'=-B$.

Let $\mathbf{i}$ be a period of $(\mathbf{x}, B)$. Since $f$ is a bijection from $S$ to itself that commutes with the action of
$M_n$,  we have $(\mathbf{x}', B')=f(\mathbf{x}, B)=f(\mu_{\mathbf{i}}(\mathbf{x}, B))=\mu_{\mathbf{i}}(f(\mathbf{x},B))=\mu_{\mathbf{i}}(\mathbf{x}', B')$, which implies that
$\mathbf{i}$ is also a period of $(\mathbf{x}', B')$. Thus,
$P(\mathbf{x},B)\subset P(\mathbf{x}',B')$.

On the other hand, $(\mathbf{x}, B)=f^{-1}(\mathbf{x}', B')$ and $f^{-1}\in \mathrm{Aut}_{M_n}(S)$, similarly, we have $P(\mathbf{x}',B')\subset P(\mathbf{x},B)$. Hence,  $P(\mathbf{x},B)=P(\mathbf{x}',B')$.

Moreover, it follows from the restriction part of Theorem \ref{RE} that $P(\acute{\mathbf{x}},\acute{B})=P(\acute{\mathbf{x}}',\acute{B}')$
for any $3\times 3$ full subseeds $(\acute{\mathbf{x}},\acute{B})$ and $(\acute{\mathbf{x}}',\acute{B}')$ of $(\mathbf{x}, B)$ and $(\mathbf{x}', B')$ respectively.

In the case (i), without loss of generality, we may assume that $v(B)=1$, that is,
$|b_{ij}|\leqslant 1$ for any $i,j\in [1,n]$.
It follows from Lemma \ref{le1} that  $|b_{ij}| = |b_{ij}'|$ for any $i,j\in [1,n]$.  And it follows from Corollary \ref{pr} that either $B'=B$ or $B'=-B$, since each arrow between $i$ and $j$ may be considered in a connected full sub-quiver with three vertices.

In the case (ii),
 since the underlying graph of $Q(B)$ has no $3$-cycles, the underlying graph of every
 connected full subquiver with three vertices of $Q$  is of one of the forms listed in Lemma \ref{le3} up to permutations.
 By Corollary \ref{pr}, we have that either $b_{ij}$ and $b'_{ij}$ have the same
signs for all $i,j\in [1,n]$, or $b_{ij}$ and $b'_{ij}$ have opposite
signs for all $i,j\in [1,n]$. Thus the quiver $Q'$ of
$B'$ is also acyclic.

Since $(\mathbf{x}',B')=f(\mathbf{x},B)$, it means that $Q$ and $Q'$ are in the same labeled mutation class. Then,
 by Proposition \ref {acy},  $Q$ and $Q'$ have the same underlying graph up to permutations,
 which follows that $v(B')=v(B)=2$. By Lemma \ref{le1}, we have $|b_{ij}| = |b_{ij}'|$ for any $i,j\in [1,n]$. Therefore $B'=B$ or $B'=-B$.

In the case (iii), we claim that the quiver $Q'$ of $B'$ is also acyclic. If $Q'$ is not acyclic and it contains
 a directed $3$-cycle $\check{Q}'$. Consider the full subquiver determined
by the $3$-cycle and the corresponding full subquiver $\check{Q}$ in $Q$.
It follows from Theorem \ref{RE} that $P(\check{\mathbf{x}},\check{Q})=P(\check{\mathbf{x}}',\check{Q}')$. Since $\check{Q}$ is acyclic
and the minimal weight of $\check{Q}$ is $1$,
then by Lemma \ref{le2}, $\check{Q}'$ is also acyclic, which is a contradiction.
If $Q'$ contains no directed $3$-cycle, then there is a directed chordless $m$-cycle $\hat{Q}'$ with $m\geqslant 4$,  where a chordless $m$-cycle is a graph with $m$ vertices $v_{[1]},\dots, v_{[m]}$ which $[k]=\mathbb{Z}/k\mathbb{Z}$ for each $k\in[1,m]$ such that the number of edges between $v_{[i]}$ and $v_{[i+1]}$ is larger than zero and the number of edges between other vertices are zeros.
Consider the full subquiver determined by the chordless $m$-cycle and the corresponding full subquiver $\hat{Q}$ in $Q$.
Since $P(\mathbf{x},B)=P(\mathbf{x}',B')$, it follows from Theorem \ref{RE} that
$P(\hat{\mathbf{x}},\hat{Q})=P(\hat{\mathbf{x}}',\hat{Q}')$. And it follows from Lemma \ref{le3} that $\hat{Q}$ and $\hat{Q}'$
have the same or opposite orientations, which is a contradiction. Therefore $Q'$ has no directed cycles, i.e., $Q'$ is also
acyclic. Similarly since $v(B)=2$, we have $|b_{ij}| = |b_{ij}'|$ for any $i,j\in [1,n]$ by Lemma \ref{le1}
and Proposition \ref{acy}. And the orientation of $Q'$ is the same as $Q$ or opposite to $Q'$ by Corollary \ref{pr}, which implies $B'=B$ or $B'=-B$.
\end{proof}

Due to this theorem, we give the following examples as a negative answer to the King and Pressland's problem, i.e., Problem \ref{prob}.
Note that a skew-symmetric matrix $B$ of order at least 3 is mutation-finite if and only if $v(B')=\mathrm{max}\{b'_{ij}|i,j\in [1,n]\}\leq 2$
for any matrix $B'$ mutation equivalent to $B$, see [\cite{FST1}, Theorem 2.6].
\begin{enumerate}
\item [(1)] Let  $B=\begin{pmatrix} 0&1&1&1\\-1&0&1&0\\-1&-1&0&-1\\-1&0&1&0 \end{pmatrix}$. This $B$  satisfies the condition of Theorem \ref{main1}(i). However, we have
 $v(\mu_2\mu_4(B))=3$.

\item [(2)] Let $B=\begin{pmatrix} 0&2&0\\-2&0&1\\0&-1&0 \end{pmatrix}$. This $B$ satisfies the condition of Theorem \ref{main1}(ii). However, we have $v(\mu_1\mu_2(B))=3$.
\end{enumerate}

 So by the above note, these two $B$'s in (1) and (2)  are of mutation-infinite type. In the meantime, by Theorem \ref{main1}, we have $\mathrm{Aut}\mathcal{A}\cong W= \mathrm{Aut}_{M_n}(S)$. Hence, (1) and (2) give two counter-examples of  Problem \ref{prob}.

\begin{Remark}
(1) For a labeled seed $(\mathbf{x},B)$ with $B$ skew-symmetric, let $S$ be its labeled mutation class,
$\mathcal{A}$ be its corresponding cluster algebra, and $E$ be its exchange graph (see [\cite {FZ4}, Definition 4.2]).
 For the automorphism  group $\mathrm{Aut}(E)$ of $E$ as graph, Lawson proved that $\mathrm{Aut}_{M_n}(S)\cong \mathrm{Aut}(E)$ in \cite{Law}.
Thus the result of Theorem \ref{main1} also provides some sufficient conditions for $\mathrm{Aut}(\mathcal{A})\cong \mathrm{Aut}(E)$.

(2) For skew-symmetrizable case, in \cite{Law} Lawson had claimed that $\mathrm{Aut}(E)$, thus $\mathrm{Aut}_{M_n}(S)$,
is larger than $\mathrm{Aut}(\mathcal{A})$  for general skew-symmetrizable cluster algebras of finite mutation type, which is also the reason
we only consider skew-symmetric cluster algebras in this section.
\end{Remark}

\vspace{10mm}

{\bf Acknowledgements }\;
{\em This project was supported by the National Natural Science Foundation of China(No.11671350) and the Zhejiang Provincial Natural Science Foundation of China (No.LY19A010023).}

\vspace{20mm}


\begin{thebibliography}{99}
\bibitem{ASS1}
I. Assem, R. Schiffler, V. Shamchenko, On a category of cluster algebras. J. Pure Appl. Algebra,
(3)218 (2014),553-582

\bibitem{ASS}
I. Assem, R. Schiffler, V. Shamchenko, Cluster automorphisms. Proc. Lond. Math. Soc.,
(3)104 (2012),1271-1302


\bibitem{CLLP}
P. Cao, F. Li, S. Liu and J. Pan, A conjecture on cluster automorphisms of cluster algebras. 27(2019), 1-6

\bibitem{CK}
P. Caldero, and B. Keller, From triangulated categories to cluster algebras. II. Ann. Sci. $\mathrm{\acute{E}}$cole Norm. Sup. (4)39,
6(2006),983-1009

\bibitem{CZ0}
W. Chang, B. Zhu, Cluster automorphism groups of cluster algebras with coefficients, Sci. China Math. (10)59(2016), 1919-1936

\bibitem{CZ1}
W. Chang, B. Zhu, Cluster automorphism groups and automorphism groups of exchange graphs. arxiv: 1506.02029

\bibitem{CZ2}
W. Chang, B. Zhu, Cluster automorphism groups of cluster algebras of finite type. J. Algebra.
447(2016):490-515

\bibitem{D}
R. Diestel, Graph theory. Springer-Verlag. New York(1997).

\bibitem{FST2}
A. Felikson, M. Shapiro, P. Tumarkin, Cluster algebras of finite mutation type via unfoldings, Int. Math. Res.
Not. (8)2012(2012), 1768-1804

\bibitem{FST1}
A. Felikson, M. Shapiro, P. Tumarkin, Skew-symmetric cluster algebras of finite mutation type.
J. Eur. Math. Soc. (4)14 (2012):1135-1180

\bibitem{FWZ}
S. Fomin, L. Williams, A. Zelevinsky, Introduction to cluster algebras, Chapters 1-3, arXiv:1608.05735v1

\bibitem{FZ1}
S. Fomin, A. Zelevinsky, Cluster algebras I: Foundations. J. Amer. Math. Soc. (2)15(2002), 497-529

\bibitem{FZ2}
S. Fomin, A. Zelevinsky, Cluster algebras II: Finite type classification. Invent. Math. (1)154(2003), 63-121

\bibitem{FZ3}
S. Fomin, A. Zelevinsky, $Y$-systems and generalized associahedra. Ann. Math. 158(2003): 977-1018

\bibitem{FZ4}
S. Fomin, A. Zelevinsky, Cluster algebras IV: Coefficients. Compos. Math. (1)143(2007), 112-164

\bibitem{GHKK}
M. Gross, P. Hacking, S. Keel, M. Kontsevich, Canonical bases for cluster algebras,
2014,arxiv:1411.1394

\bibitem{GNR}
M. Gekhtman, T. Nakanishi, D. Rupel,
Hamiltonian and Lagrangian formalisms of mutations in cluster algebras and application to dilogarithm identities,
J. Integrable Syst. 2 (2017)£¬ 1-35

\bibitem{GSV}
M. Gekhtman, M. Shapiro, and A. Vainshtein, On the properties of the exchange graph of a cluster algebra, Math. Res.
Lett. 15 (2008), no.2, 321-330

\bibitem{HLY}
M. Huang, F. Li, and Y. Yang, On structure of sign-skew-symmetric cluster algebras of geometric type,
I: In view of sub-seeds and seed homomorphisms, Sci. China Math. (5)61(2018), 831-854

\bibitem{K}
B. Keller, Cluster algebras, cluster categories and periodicity. talk presented at Workshop
on Representation theory of algebras 2008, Shizuoka, September, 2008

\bibitem{KP}
A. King, and M. Pressland, Labelled seeds and the mutation group, Math. Proc. Camb. Phil. Soc. (2)163(2017), 193-217

\bibitem{Law}
J.W. Lawson, Cluster automorphisms and the marked exchange graphs of skew-symmetrizable cluster algebras.
Elec. J. of Comb. (4)23 (2016), P4.41

\bibitem{LS}
K. Lee, R. Schiffler. Positivity for cluster algebras.
Ann. of Math. (2) 182(2015), no. 1, 73-125

\bibitem{Nak}
T. Nakanishi, Periodicities in cluster algebras and dilogarithm identities, EMS Series of Congress Reports, European Mathematical Society, 2011, pp.407-444,

\bibitem{W}
M. Warkentin, Exchange graphs via quiver mutation, Ph.D. thesis, 2014. Available at
http://www.qucosa.de/urnnbn/urn:nbn:de:bsz:ch1-qucosa-153172



\end{thebibliography}
\end{document}